\DeclareMathOperator{\re}{Re}
\DeclareMathOperator{\im}{Im}
\newcommand{\mR}{{\mathds R}}
\newtheorem{theorem}{Theorem}[section]
\newtheorem{proposition}[theorem]{Proposition}
\newtheorem{lemma}[theorem]{Lemma}
\newtheorem{corollary}[theorem]{Corollary}
\theoremstyle{definition}
\newtheorem*{definition}{Definition}
\newtheorem*{example}{Example}
\newtheorem*{remark}{Remark}
\begin{document}

\title{The convergence of discrete period matrices}

\author{Felix G\"unther\footnote{Institut f\"ur Mathematik, MA 8-3, Technische
Universit\"at Berlin, Stra{\ss}e des 17. Juni 136, 10623 Berlin, Germany. E-mail: fguenth@math.tu-berlin.de.}}

\date{}
\maketitle

\begin{abstract}
\noindent
We study compact polyhedral surfaces as Riemann surfaces and their discrete counterparts obtained through quadrilateral cellular decompositions and a linear discretization of the Cauchy-Riemann equation. By ensuring uniformly bounded interior and intersection angles of diagonals, we establish the convergence of discrete Dirichlet energies of discrete harmonic differentials with equal black and white periods to the Dirichlet energy of the corresponding continuous harmonic differential with the same periods. This convergence also extends to the discrete period matrix, with a description of the blocks of the complete discrete period matrix in the limit. Moreover, when the quadrilaterals have orthogonal diagonals, we observe convergence of discrete Abelian integrals of the first kind. Adapting the quadrangulations around conical singularities allows us to improve the convergence rate to a linear function of the maximum edge length.
\\ \vspace{0.5ex}

\noindent
\textbf{2020 Mathematics Subject Classification:} 39A12; 65M60; 30F30.\\ \vspace{0.5ex}

\noindent
\textbf{Keywords:} Discrete complex analysis, discrete Riemann surface, discrete Dirichlet energy, discrete period matrix, discrete Abelian integral.
\end{abstract}

\raggedbottom
\setlength{\parindent}{0pt}
\setlength{\parskip}{1ex}

%%%%%%%%%%%%%%%%%%%%%%%%%%%%%%%%%%%%%%%%%%%%%%%%%%%%%%%%%%%%%%%%%%%%%%%%%%%%%%%%%%%%%%%%

\section{Introduction}\label{sec:intro}

Over the past few decades, discrete complex analysis has made remarkable progress, even though its development started much later compared to its continuous counterparts. While the continuous theory of complex analysis has a long-established history, its discrete analog emerged relatively recently and is still the subject of ongoing research and exploration.

The concept of discrete harmonicity can be traced back to Kirchhoff's circuit laws, but it was not until almost one hundred years ago that authors such as Coruant, Friedrichs, and Lewy \cite{CoFrLe28} delved into the study of discrete harmonic functions on the square lattice. Subsequently, Isaacs \cite{Is41} introduced discrete holomorphic functions on the square lattice, with later investigations by Lelong-Ferrand \cite{Fe44,Fe55} leading to the proof of the Riemann mapping theorem using a limit of discrete holomorphic functions. Expanding the theory, Duffin extended it to rhombic lattices \cite{Du56, Du68}, while Mercat \cite{Me01}, Kenyon \cite{Ke00, Ke02}, and Chelkak and Smirnov \cite{ChSm11,ChSm12} revived the study of discrete holomorphicity on rhombic lattices from the perspective of statistical physics models such as the Ising model.

Different notions of discrete holomorphicity were explored, including those on triangular lattices by Dynnikov and Novikov \cite{DN03} and via circle packings and patterns by Stephenson \cite{Ste05}, Rodin and Sullivan \cite{RSul87}, and B"ucking \cite{Bue08}. Bobenko, Mercat, and Suris demonstrated the connection between discrete holomorphic functions and infinitesimal deformations of circle patterns \cite{BoMeSu05}, while Bobenko, Pinkall, and Springborn developed a theory of discrete conformally equivalent metrics \cite{BoPSp10}. Further, Wilson presented a linear theory for discrete complex analysis on triangulated surfaces using holomorphic cochains \cite{Wi08}, showing convergence of discrete period matrices to their continuous counterparts.

Mercat's work on discrete Riemann surfaces and discrete period matrices based on linear discretization \cite{Me01, Me01b} laid the groundwork for generalizing the theory to discrete Riemann surfaces composed of general quadrilaterals. Later, Skopenkov's proof of the uniform convergence of discrete harmonic functions on planar domains decomposed into quadrilaterals with orthogonal diagonals \cite{Sk13}, inspired Bobenko and Skopenkov to extend the ideas to discrete Riemann surfaces \cite{BoSk12}. They explored discrete harmonic functions on triangulations of polyhedral surfaces and demonstrated convergence of discrete Dirichlet energies, period matrices, and discrete Abelian integrals of the first kind. However, the problem of generalizing the convergence of discrete period matrices to general quadrangulations remained unresolved.

In collaboration with Bobenko, we further developed the linear theory on general bipartite quadrilateral cellular decompositions of Riemann surfaces \cite{BoG15, BoG17}. Our work, based on the medial graph of the decomposition, established an analogy to the continuous theory, enabling discretizations of linear theorems. In this paper, we continue our exploration by solving the open problem stated by Bobenko and Skopenkov, focusing on the convergence of discrete period matrices on compact polyhedral surfaces. To achieve this, we adapt the approach of Bobenko and Skopenkov and draw inspiration from nonconforming finite element methods developed by Braess \cite{Br07}.

In addition to showing convergence of the discrete period matrix, we specify the limits of the four $g \times g$-blocks of the complete discrete period matrix, which also considers the discrete holomorphic differentials that do not have equal black and white periods, providing more information on the underlying discrete Riemann surface. Furthermore, for discrete Riemann surfaces decomposed into orthodiagonal quadrilaterals, we demonstrate how the convergence of discrete Abelian integrals of the first kind follows in a manner similar to \cite{BoSk12}. Moreover, we present improved error estimates in the convergence proofs, achieving a linear rate in the maximum edge length $h$ by utilizing $h$-adapted quadrangulations, as defined in \cite{BoBu17}.

Period matrices play a fundamental role in the study of nonlinear integrable equations \cite{BBEIM94}, making their numerical computation of great interest. Recently, applied algebraic geometry has seen a surge in interest regarding the computation of period matrices. While Riemann surfaces and algebraic curves are equivalent, determining the algebraic curve associated with a Riemann surface is a non-trivial task. This computation involves Riemann theta functions, leading \c{C}elik, Fairchild, and Mandelshtam to coin the term \textit{transcendental divide} for this problem \cite{CFM23}. The period matrix not only determines the Jacobian variety but, according to Torelli's theorem, also the algebraic curve. Although Deconinck and van Hoeij demonstrated practical methods for computing period matrices of algebraic curves \cite{DH01}, reconstructing the algebraic curve from the period matrix has remained limited to surfaces of low genus, thanks to an algorithm developed by Agostini, \c{C}elik, and Eken \cite{ACE21}. However, the theory of discrete Riemann surfaces offers a valuable tool to approximate the period matrix with known error estimates for compact polyhedral surfaces, which are equivalent to compact Riemann surfaces by a theorem of Troyanov \cite{T86}.

In Chapter~\ref{sec:basic}, we recapitulate the basic notions of discrete Riemann surfaces as developed in \cite{BoG17}, specifically focusing on compact polyhedral surfaces. We then delve into the proof of convergence of the discrete Dirichlet energy in Chapter~\ref{sec:convergence1}. Building upon this, we demonstrate how the convergence of the discrete Dirichlet energy implies the convergence of the discrete period matrix to the period matrix of the Riemann surface in Chapter~\ref{sec:convergence2}. Additionally, we address the convergence of discrete Abelian integrals of the first kind in the context of quadrangulations with orthogonal diagonals. Finally, we discuss the application of our convergence results in the paper \cite{CFM23} and outline future research questions in Chapter~\ref{sec:conclusion}.

%%%%%%%%%%%%%%%%%%%%%%%%%%%%%%%%%%%%%%%%%%%%%%%%%%%%%%%%%%%%%%

\section{Discrete Riemann surfaces and their period matrices}\label{sec:basic}

In this chapter, we present a concise summary of the linear theory of discrete Riemann surfaces using general quad-graphs. For a more comprehensive understanding and complete proofs, we recommend referring to the detailed explanations in \cite{BoG17}. Our main emphasis is on the discretization of polyhedral surfaces, which can be viewed as both continuous and discrete Riemann surfaces. However, it is important to note that not all discrete Riemann surfaces can be realized as polyhedral surfaces.

In Section~\ref{sec:Riemann}, we begin by clarifying the type of Riemann surfaces we are discretizing. We define discrete Riemann surfaces based on quad-graphs, which serve as our discretization framework. In Section~\ref{sec:differential}, we delve into the concept of discrete holomorphicity and define discrete differentials on the medial graph of the quad-graph. The definitions and key properties of discrete period matrices are covered in Section~\ref{sec:period}, where we review their fundamental aspects. Finally, in Section~\ref{sec:Dirichlet}, we express both the continuous and discrete Dirichlet energy as quadratic forms. These forms involve entries derived from the sums, products, and inverses of the real and imaginary parts of the continuous or the (complete) discrete period matrix.

%%%%%%%%%%%%%%%%%%%%%%%%%%%%%%%

\subsection{Discrete Riemann surfaces}\label{sec:Riemann}

Following the work of Bobenko and Skopenkov \cite{BoSk12}, we consider the discretization of a \textit{compact polyhedral surface} $\Sigma$ without boundary. To guarantee the existence of a period matrix, we assume that $\Sigma$ is not topologically equivalent to a sphere. The surface $\Sigma$ is constructed by joining Euclidean polygons along shared edges \cite{BoG17}. In other words, $\Sigma$ possesses a piecewise flat metric, and it exhibits isolated conical singularities \cite{BoSk12}. These conical singularities correspond to vertices where the sum of the interior angles of the adjacent polygons deviates from $2\pi$.

Let $x$ and $y$ be two points on the surface $\Sigma$. The notation $|xy|$ represents the geodesic distance between these two points. We define the finite set $S$ to be the collection of conical singularities of $\Sigma$. Each singularity $O$ in $S$ is associated with an open disk $D_O$ centered at $O$ and has a radius $R_O$. Importantly, the closure of $D_O$ does not contain any other singularity. The singularity index $\gamma_O$ is defined as the ratio of $2\pi R_O$ to the circumference of the boundary of $D_O$. In simpler terms, $2\pi/\gamma_O$ represents the sum of angles at $O$ formed by the polygons incident to this vertex. Geometrically, $D_O$ can be visualized as a cone with an opening angle of $2\pi/\gamma_O$ \cite{BoSk12}.

The surface $\Sigma$ possesses a canonical complex structure \cite{Bo11}. To define an atlas, we introduce \textit{polar coordinates} $(r,\psi)$ around each singularity $O$ on the corresponding disk $D_O$. Here, $r$ ranges from 0 to $R_O$, and $\psi$ lies in the interval $\mathds{R}$ modulo $2\pi/\gamma_O$. We define the chart $g_O: D_O \to \mathds{C}$ as $g_O(r,\psi) = r^{\gamma_O} \exp(i\gamma_O\psi)$, which corresponds to the complex map $z \mapsto z^{\gamma_O}$.

For points $p$ in $\Sigma$ excluding the singularities $S$, we select open disks $D_p \subset \Sigma$ centered at $p$. These disks do not contain any singularities in their closures. We choose an isometry $g_p: D_p \to \mathds{C}$ using the Euclidean metric on $\mathds{C}$. If $D_p$ lies completely within a Euclidean polygon of $\Sigma$, the choice of $g_p$ is straightforward. If $D_p$ intersects an edge, we flatten $D_p$ along that edge. Different isometries $g_p$ and $g_{p'}$ for $p$ and $p'$ in $\Sigma \backslash S$ are related by an isometry of $\mathds{C}$. The transition map $g_O \circ g_p^{-1}$ takes the form $z \mapsto (az+b)^{\gamma_O}$, where $a$ and $b$ are complex numbers with $\left|a\right|=1$. The singularity $O$ is not included in this transition map, specifically excluding $z=-b/a$. Importantly, all transition maps are holomorphic, thus the charts $g_x$ for $x$ in $\Sigma$ define a complex analytic atlas.

Conversely, it was shown by Troyanov \cite{T86} that any compact Riemann surface can be realized as a polyhedral surface.

\begin{definition}
A discretization of the Riemann surface $\Sigma$, or a \textit{discrete Riemann surface} denoted as $(\Sigma,\Lambda)$, is a finite decomposition $\Lambda$ of $\Sigma$ into flat embedded quadrilaterals $F(\Lambda)$. The decomposition satisfies the following properties: The set of conical singularities $S$ is included in the set of vertices $V(\Lambda)$, and the bipartite property holds for the graph $(V(\Lambda),E(\Lambda))$. This decomposition is referred to as a quad-graph, where $V(\Lambda)$, $E(\Lambda)$, and $F(\Lambda)$ represent the sets of vertices, edges, and faces, respectively. The diagonals of the quadrilaterals in $F(\Lambda)$ give rise to two connected graphs called the \textit{black graph} $\Gamma$ and the \textit{white graph} $\Gamma^*$, while $\Diamond$ represents the dual graph of $\Lambda$. If the diagonals of every quadrilateral in $F(\Lambda)$ intersect each other orthogonally, we call the discretization $(\Sigma,\Lambda)$ \textit{orthodiagonal}.
\end{definition}

A \textit{discrete chart} $z = z_Q$ of a face $Q$ refers to an isometric embedding of $Q$ into the complex plane $\mathds{C}$.

\begin{definition}
Given a face $Q \in F(\Lambda)$, its vertices are denoted as $b_-, w_-, b_+$, and $w_+$ in counterclockwise order, with the first vertex being a black vertex. These vertices are identified with their corresponding complex values provided by a discrete chart.

The \textit{discrete complex structure} of $(\Sigma, \Lambda)$ is defined by the set of all $\rho_Q$, where $Q$ ranges over $F(\Lambda)$. The value of $\rho_Q$ is given by \[\rho_Q = -i\frac{w_+ - w_-}{b_+ - b_-}.\]
\end{definition}

In the work presented in \cite{BoG17}, it has been shown that a bijective map $z = z_v$ can always be found from the star of a vertex $v$ in $\Lambda$ to a planar quadrilateral vertex star, preserving the discrete complex structure. This map is also referred to as a \textit{discrete chart}.

It is worth noting that $\rho_Q$ always has a positive real part. In the case where the diagonals $b_- b_+$ and $w_- w_+$ are orthogonal to each other, $\rho_Q \in \mathds{R}^+$. Thus, an orthodiagonal discrete Riemann surface corresponds to a real discrete complex structure.

In their work \cite{BoSk12}, Bobenko and Skopenkov considered a triangulation $\Gamma$ with the property that the set of conical singularities $S$ is contained in the set of vertices $V(\Gamma)$. They placed the dual vertices of $\Gamma$ at the circumcenters of the triangles. When the circumcircles of the triangles do not contain any points other than the three vertices of the triangle, a quad-graph $\Lambda$ is obtained by connecting the vertices of each triangle in $F(\Gamma)$ with the corresponding dual vertex in $V(\Gamma^*)$. The type of triangulation described above is known as a \textit{Delaunay triangulation}, and the resulting orthodiagonal quad-graph $\Lambda$ is referred to as the \textit{Delaunay-Voronoi quadrangulation}.

In our research, we expand upon the convergence results presented in \cite{BoSk12} by considering general quad-graphs, specifically in relation to period matrices. Additionally, we extend the analysis to encompass general orthodiagonal quad-graphs that may not be derived from a Delaunay triangulation. It is worth noting that the framework established by Bobenko and Skopenkov accommodates scenarios where certain quadrilaterals, denoted by $Q$, degenerate into lines (indicated by $\rho_Q = 0$) or exhibit negative orientation (indicated by $\rho_Q < 0$). These cases arise when the sum of the two angles opposite an edge is equal to or greater than $\pi$, respectively.

It is important to emphasize that while any discrete complex structure with real $\rho_Q$ on a quad-graph $\Lambda$ can be realized through the discretization process outlined earlier for the Riemann surface $\Sigma$, this is not generally applicable when not all $\rho_Q$ are real \cite{BoG17}.

%%%%%%%%%%%%%%%%%%%%%%%%%%%%%%%

\subsection{Discrete holomorphicity and discrete differentials}\label{sec:differential}

The concept of discrete holomorphicity on quad-graphs is primarily attributed to Mercat \cite{Me01, Me07, Me08}.

\begin{definition}
We define a function $f: V(\Lambda) \rightarrow \mathbb{C}$ to be \textit{discrete holomorphic} if it satisfies the equation \[f(w_+)-f(w_-)=i\rho_Q (f(b_+)-f(b_-))\] for all quadrilaterals $Q \in F(\Lambda)$ with vertices $b_-, w_-, b_+, w_+$ listed in counterclockwise order, starting with a black vertex.
\end{definition}

To establish a discrete analogue of exterior calculus, analogous to the continuous setting, the medial graph plays a significant role as the underlying structure for discrete differentials \cite{BoG17}. In Figure~\ref{fig:medial}, the gray-colored vertices represent the medial graph.

%-----------------------------------------------------------------
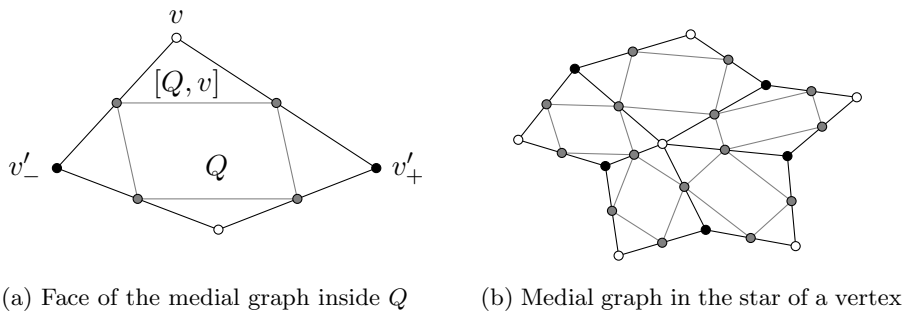
\begin{figure}[htbp]
   \centering
    \subfloat[Face of the medial graph inside $Q$]{
    \beginpgfgraphicnamed{medial1}
			\begin{tikzpicture}[white/.style={circle,draw=black,fill=white,thin,inner sep=0pt,minimum size=1.2mm},
black/.style={circle,draw=black,fill=black,thin,inner sep=0pt,minimum size=1.2mm},
gray/.style={circle,draw=black,fill=gray,thin,inner sep=0pt,minimum size=1.2mm},scale=0.6]
			\clip(-1.7,-6.5) rectangle (7.4,-0.2);
			\draw (-0.6,-4.16)-- (2.02,-1.28);
			\draw (2.02,-1.28)-- (6.4,-4.16);
			\draw (6.4,-4.16)-- (2.94,-5.52);
			\draw (2.94,-5.52)-- (-0.6,-4.16);
			\draw [color=gray] (1.17,-4.84)-- (0.71,-2.72);
			\draw [color=gray] (0.71,-2.72)-- (4.21,-2.72);
			\draw [color=gray] (4.21,-2.72)-- (4.67,-4.84);
			\draw [color=gray] (4.67,-4.84)-- (1.17,-4.84);
			\node[white] (w1) at (2.94,-5.52) {};
			\node[white] (w2) [label=above:$v$] at (2.02,-1.28) {};
			\node[black] (b1) [label=left:$v'_{-}$] at (-0.6,-4.16) {};
			\node[black] (b2) [label=right:$v'_{+}$] at (6.4,-4.16) {};
			\draw (2.9,-4.16) node {$Q$};
			\node[gray] (m1) at (0.71,-2.72) {};
			\node[gray] (m2) at (4.21,-2.72) {};
			\node[gray] (m3) at (4.67,-4.84) {};
			\node[gray] (m4) at (1.17,-4.84) {};
			\draw (2.25,-2.3) node {$[Q,v]$};
		\end{tikzpicture}
		\endpgfgraphicnamed}
		\qquad
		\subfloat[Medial graph in the star of a vertex]{
		\beginpgfgraphicnamed{medial2}
		\begin{tikzpicture}
		[white/.style={circle,draw=black,fill=white,thin,inner sep=0pt,minimum size=1.2mm},
black/.style={circle,draw=black,fill=black,thin,inner sep=0pt,minimum size=1.2mm},
gray/.style={circle,draw=black,fill=gray,thin,inner sep=0pt,minimum size=1.2mm},scale=0.6]
	\clip(2,-4) rectangle (10.8,2);
				\draw (2.4,-1.03)-- (3.64,0.54);
				\draw (3.64,0.54)-- (6.18,1.3);
				\draw (3.64,0.54)-- (5.56,-1.12);
				\draw (5.56,-1.12)-- (4.31,-1.61);
				\draw (4.31,-1.61)-- (2.4,-1.03);
				\draw (4.31,-1.61)-- (4.6,-3.59);
				\draw (9.82,-0.09)-- (7.83,0.18);
				\draw (9.82,-0.09)-- (8.3,-1.39);
				\draw (7.83,0.18)-- (5.56,-1.12);
				\draw (8.3,-1.39)-- (5.56,-1.12);
				\draw (7.83,0.18)-- (6.18,1.3);
				\draw (5.56,-1.12)-- (6.51,-3.02);
				\draw (6.51,-3.02)-- (4.6,-3.59);
				\draw (6.51,-3.02)-- (8.48,-3.38);
				\draw (8.48,-3.38)-- (8.3,-1.39);
				\draw [color=gray] (4.94,-1.36)-- (4.6,-0.29);
				\draw [color=gray] (4.6,-0.29)-- (6.7,-0.47);
				\draw [color=gray] (6.7,-0.47)-- (6.93,-1.25);
				\draw [color=gray] (6.93,-1.25)-- (6.04,-2.07);
				\draw [color=gray] (6.04,-2.07)-- (4.94,-1.36);
					\node[white] (w1) at (9.82,-0.09) {};
					\node[white] (w2) at (5.56,-1.12) {};
					\node[white] (w3) at (2.4,-1.03) {};
					\node[white] (w4) at (8.48,-3.38) {};
					\node[white] (w5) at (4.6,-3.59) {};
					\node[white] (w6) at (6.18,1.3) {};
					\node[black] (b1) at (7.83,0.18) {};
					\node[black] (b2) at (8.3,-1.39) {};
					\node[black] (b3) at (3.64,0.54) {};
					\node[black] (b4) at (6.51,-3.02) {};
					\node[black] (b5) at (4.31,-1.61) {};
					\draw [color=gray] (7.495,-3.2) --(8.39,-2.385)--(6.93,-1.25)--(9.06,-0.74)--(8.825,0.045)--(6.7,-0.47)--(7.005,0.74) --(4.91,0.92)--(4.6,-0.29)--(3.02,-0.245)--(3.355,-1.32)--(4.94,-1.36)--(4.455,-2.6)--(5.555,-3.305)--(6.04,-2.07)--(7.495,-3.2);				
					\node[gray] (m1) at (6.93,-1.25) {};
					\node[gray] (m2) at (6.7,-0.47) {};
					\node[gray] (m3) at (4.6,-0.29) {};
					\node[gray] (m4) at (4.94,-1.36) {};
					\node[gray] (m5) at (6.04,-2.07) {};
					\node[gray] (m6) at (7.495,-3.2) {};
					\node[gray] (m7) at (8.39,-2.385) {};
					\node[gray] (m8) at (9.06,-0.74) {};
					\node[gray] (m9) at (8.825,0.045) {};
					\node[gray] (m10) at (7.005,0.74) {};
					\node[gray] (m11) at (4.91,0.92) {};
					\node[gray] (m12) at (3.02,-0.245) {};
					\node[gray] (m13) at (3.355,-1.32) {};
					\node[gray] (m14) at (4.455,-2.6) {};
					\node[gray] (m15) at (5.555,-3.305) {};
			\end{tikzpicture}
		\endpgfgraphicnamed}
   \caption[]{Medial graph $X$ and notation of its edges.}
   \label{fig:medial}
\end{figure}
%-----------------------------------------------------------------

\begin{definition}
The \textit{medial graph} $X$ of $\Lambda$ is defined as follows: The vertices of $X$ correspond to the midpoints of the edges in $\Lambda$, and an edge $[Q, v]$ connects the midpoints of two edges that share a vertex $v \in V(\Lambda)$ and belong to the same quadrilateral $Q \in F(\Lambda)$. The faces of $X$, denoted by $F(X)$, are in bijection with $V(\Lambda) \cup F(\Lambda)$.
\end{definition}

Each face $F \in F(X)$ associated with a quadrilateral $Q \in F(\Lambda)$ forms a parallelogram when $Q$ is embedded in the complex plane. The edges of $F$ are parallel to the black and white diagonals of $Q$.

Now, we proceed with the definitions of discrete differentials and discrete two-forms:

A \textit{discrete differential} $\omega$ is a complex-valued function defined on the oriented edges of $X$. It satisfies the property that $\omega(-e) = \omega(e)$ for differently oriented edges $\pm e$. The evaluation of $\omega$ along directed paths or cycles on $X$ is denoted as $\int_P \omega$ and $\oint_P \omega$, respectively. Furthermore, a discrete differential $\omega$ is said to be of \textit{type} $\Diamond$ if, for every quadrilateral $Q \in F(\Lambda)$, the values of $\omega$ on the parallel edges of the corresponding face $F_Q \in F(X)$ are the same. Discrete differentials of type $\Diamond$ play a significant role as they are primarily defined on the oriented edges of the quad-graph $\Lambda$ and its dual graph.

A \textit{discrete two-form} $\Omega$ is a complex-valued function defined on the faces of $X$. The evaluation of $\Omega$ on a set of faces is denoted as $\iint_S \Omega$.

The medial graph approach not only allows us to define a natural product between functions defined on either $V(\Lambda)$ or $F(\Lambda)$ and discrete two-forms with matching support, but also enables the definition of a natural product between functions and discrete one-forms. Specifically, the value of a function at a vertex $v$ or a face $Q$ is multiplied by the value of the discrete one-form on all edges $[Q,v]$.

In a chart $z$ associated with a quadrilateral $Q \in F(\Lambda)$ or the star of a vertex $v \in V(\Lambda)$, we can canonically define $dz$ and $d\bar{z}$. These differentials are of type $\Diamond$. Any discrete differential of type $\Diamond$ can be locally represented as $pdz + qd\bar{z}$, where $p$ and $q$ are complex functions on $F(\Lambda)$ that depend on the chosen chart.

To define the wedge product $dz \wedge d\bar{z}$ in a chart $z = z_Q$, we set $\iint_{F_Q} dz \wedge d\bar{z} = -4i \text{area}(z(Q))$. Similarly, in a chart $z = z_v$, we define $dz \wedge d\bar{z}$ by $\iint_{F_v} dz \wedge d\bar{z} = -4i \text{area}(z(F_v))$, while $\iint_{F_Q} dz \wedge d\bar{z} = 0$ for faces $Q$ incident to $v$. The additional factor of 2 compared to the continuous setting accounts for the fact that the parallelograms in the medial graph cover exactly half of the surface area.

\begin{definition}
Let $f:V(\Lambda) \to \mathbb{C}$ and $h:F(\Lambda) \to \mathbb{C}$. Consider the discrete charts $z_Q$ and $z_v$ associated with $Q \in F(\Lambda)$ and $v\in V(\Lambda)$, respectively, and let $F_Q$ and $F_v$ denote the corresponding faces of $X$ with counterclockwise orientations of their boundaries. We define the \textit{discrete derivatives} $\partial_\Lambda f$, $\bar{\partial}\Lambda f$, $\partial\Diamond h$, and $\bar{\partial}\Diamond h$ as follows:
\begin{align*}
\partial_\Lambda f(Q)&:=\frac{1}{\iint_{F_Q} dz_Q \wedge d\bar{z}Q}\oint_{\partial F_Q} f d\bar{z}Q, \quad \bar{\partial}_\Lambda f (Q):=\frac{-1}{\iint_{F_Q} dz_Q \wedge d\bar{z}Q}\oint_{\partial F_Q} f dz_Q,\\
\partial_\Diamond h(v)&:=\frac{1}{\iint_{F_v} dz_v \wedge d\bar{z}v}\oint_{\partial F_v} h d\bar{z}v, \quad \bar{\partial}_\Diamond h(v):=\frac{-1}{\iint_{F_v} dz_v \wedge d\bar{z}v}\oint_{\partial F_v} h dz_v.
\end{align*}
\end{definition}

In the continuous setting, the formulas provided above serve as first-order approximations of derivatives \cite{ChSm11}. In the discrete context, a function $f:V(\Lambda) \to \mathbb{C}$ is discrete holomorphic if and only if $\bar{\partial}\Lambda f \equiv 0$ in any chart. However, the equation $\bar{\partial}\Diamond h = 0$ is typically dependent on the chosen chart, making the notion of discrete holomorphicity for functions $h:F(\Lambda)\to\mathbb{C}$ not well-defined on general Riemann surfaces.

The introduced concepts of discrete derivatives for functions, discrete one-forms $dz$, $d\bar{z}$, and the discrete two-form $dz \wedge d\bar{z}$ allow us to directly adapt the continuous formulas of the exterior derivative, wedge product, Hodge star, scalar product, formal adjoint $\delta:=-\star d \star$, Laplacian, harmonicity of functions and one-forms, and holomorphicity of one-forms to the discrete setting.

For instance, the discrete exterior derivative $df$ of a complex function $f:V(\Lambda) \to \mathbb{C}$ is given locally by $df = \partial_\Lambda f dz + \bar{\partial}_\Lambda f d\bar{z}$. In the case of the discrete wedge product and discrete Hodge star, we need to consider discrete one-forms of type $\Diamond$. The proof of the chart-independence of the discrete formulas corresponds to the previous definitions by Mercat in \cite{Me01,Me08}. For example, the chart-independence of the discrete exterior derivative corresponds to the \textit{discrete Stokes' theorem} $\iint_F d\omega = \oint_{\partial F} \omega$.

%%%%%%%%%%%%%%%%%%%%%%%%%%%%%%%

\subsection{Periods and discrete period matrices}\label{sec:period}

We consider a canonical homology basis ${a_1, \ldots, a_g, b_1, \ldots, b_g}$ of $H_1(\Sigma,\mathbb{Z})$. Let $\alpha_1, \ldots, \alpha_g, \beta_1, \ldots, \beta_g$ be closed paths on $X$ that correspond to the homologies $a_1, \ldots, a_g, b_1, \ldots, b_g$, respectively. These paths share a common base point $x_0 \in V(X)$, which is fixed throughout the discussion.

%-----------------------------------------------------------------
\begin{figure}[htbp]
\begin{center}
\beginpgfgraphicnamed{medial}
\begin{tikzpicture}
[white/.style={circle,draw=black,fill=black,thin,inner sep=0pt,minimum size=1.2mm},
black/.style={circle,draw=black,fill=white,thin,inner sep=0pt,minimum size=1.2mm},
gray/.style={circle,draw=black,fill=gray,thin,inner sep=0pt,minimum size=1.2mm},scale=1.0]
\node[white] (w1)
at (-2,-2) {};
\node[white] (w2)
 at (0,-2) {};
\node[white] (w3)
 at (2,-2) {};
\node[white] (w4)
 at (-1,-1) {};
\node[white] (w5)
 at (1,-1) {};
\node[white] (w6)
 at (-2,0) {};
\node[white] (w7)
 at (0,0) {};
\node[white] (w8)
 at (2,0) {};
\node[white] (w9)
 at (-1,1) {};
\node[white] (w10)
 at (1,1) {};
\node[white] (w11)
 at (-2,2) {};
\node[white] (w12)
 at (0,2) {};
\node[white] (w13)
 at (2,2) {};

\node[black] (b1)
 at (-1,-2) {};
\node[black] (b2)
 at (1,-2) {};
\node[black] (b3)
 at (-2,-1) {};
\node[black] (b4)
 at (0,-1) {};
\node[black] (b5)
 at (2,-1) {};
\node[black] (b6)
 at (-1,0) {};
\node[black] (b7)
 at (1,0) {};
\node[black] (b8)
 at (-2,1) {};
\node[black] (b9)
 at (0,1) {};
\node[black] (b10)
 at (2,1) {};
\node[black] (b11)
 at (-1,2) {};
\node[black] (b12)
 at (1,2) {};

\node[gray] (m1)
 at (0,-1.5) {};
\node[gray] (m2)
 at (0.5,-1) {};
\node[gray] (m3)
 at (1,-0.5) {};
\node[gray] (m4)
 at (1.5,0) {};
\node[gray] (m5)
 at (1,0.5) {};
\node[gray] (m6)
 at (0.5,1) {};
\node[gray] (m7)
 at (0,1.5) {};
\node[gray] (m8)
 at (-0.5,1) {};
\node[gray] (m9)
 at (-1,0.5) {};
\node[gray] (m10)
 at (-1.5,0) {};
\node[gray] (m11)
 at (-1,-0.5) {};
\node[gray] (m12)
 at (-0.5,-1) {};

\draw[dashed] (w1) -- (b1) -- (w2) -- (b2) -- (w3);
\draw[dashed] (b3) -- (w4) -- (b4) -- (w5) -- (b5);
\draw[dashed] (w6) -- (b6) -- (w7) -- (b7) -- (w8);
\draw[dashed] (b8) -- (w9) -- (b9) -- (w10) -- (b10);
\draw[dashed] (w11) -- (b11) -- (w12) -- (b12) -- (w13);

\draw[dashed] (w1) -- (b3) -- (w6) -- (b8) -- (w11);
\draw[dashed] (b1) -- (w4) -- (b6) -- (w9) -- (b11);
\draw[dashed] (w2) -- (b4) -- (w7) -- (b9) -- (w12);
\draw[dashed] (b2) -- (w5) -- (b7) -- (w10) -- (b12);
\draw[dashed] (w3) -- (b5) -- (w8) -- (b10) -- (w13);

\draw (w2) -- (w5) -- (w8) -- (w10) -- (w12) -- (w9) -- (w6) -- (w4) -- (w2);
\draw (b4) -- (b7) -- (b9) -- (b6) -- (b4);

\draw[color=gray] (m1) -- (m2) -- (m3) -- (m4) -- (m5) -- (m6) -- (m7) -- (m8) -- (m9) -- (m10) -- (m11) -- (m12) -- (m1);

\coordinate[label=center:$W(P)$] (z1)  at (-0.2,-0.3) {};
\coordinate[label=center:$P$] (z2)  at (0.5,-1.25) {};
\coordinate[label=center:$B(P)$] (z3)  at (-0.85,-1.7) {};
\end{tikzpicture}
\endpgfgraphicnamed
\caption{Cycles $P$ on $X$, $B(P)$ on $\Gamma$, and $W(P)$ on $\Gamma^*$.}
\label{fig:contours2}
\end{center}
\end{figure}
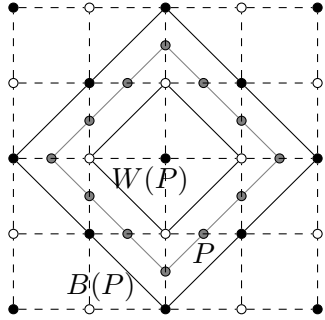
%-----------------------------------------------------------------

Figure~\ref{fig:contours2} illustrates how each oriented cycle $P$ on $X$ induces closed paths $B(P)$ and $W(P)$ on $\Gamma$ and $\Gamma^*$, respectively, by adding the respective parallel diagonal. We denote the sets of oriented edges of $X$ that are parallel to the edges of $B(P)$ and $W(P)$ as $BP$ and $WP$, respectively.

\begin{definition}
Let $\omega$ be a closed discrete differential of type $\Diamond$. For $1 \leq k \leq g$, we define its $a_k$-periods as $A_k:=\oint_{\alpha_k} \omega$ and $b_k$-periods as $B_k:=\oint_{\beta_k} \omega$. We also define its black $a_k$-periods as $A^B_k:=2\int_{B\alpha_k} \omega$ and its black $b_k$-periods as $B^B_k:=2\int_{B\beta_k} \omega$. Similarly, we define its white $a_k$-periods as $A^W_k:=2\int_{W\alpha_k} \omega$ and its white $b_k$-periods as $B^W_k:=2\int_{W\beta_k} \omega$.
\end{definition}

Clearly, $2A_k=A_k^B+A_k^W$ and $2B_k=B_k^B+B_k^W$. In some cases, it is more convenient to work with the integrals of closed discrete differentials instead. These integrals have a one-to-one correspondence with the discrete differentials and exhibit multi-valued behavior with periods, unless the differential vanishes. We denote the universal cover of $(\Sigma, \Lambda)$ as $(\tilde{\Sigma}, \tilde{\Lambda})$, which is a non-compact discrete Riemann surface. The deck transformation corresponding to the closed cycle $P$ on $\Sigma$ is denoted as $d_P$.

\begin{definition}
A function $f: V(\tilde{\Lambda}) \to \mathbb{C}$ is said to be \textit{multi-valued} with black periods $A_1^B, A_2^B, \ldots, A_g^B$, $B_1^B, B_2^B, \ldots, B_g^B \in \mathbb{C}$ and white periods $A_1^W, A_2^W, \ldots, A_g^W$, $B_1^W, B_2^W, \ldots, B_g^W \in \mathbb{C}$ if it satisfies the following conditions:
\begin{align*}
f(d_{\alpha_k}b)=f(b)+A_k^B, \quad f(d_{\alpha_k}w)=f(w)+A_k^W, \quad f(d_{\beta_k}b)=f(b)+B_k^B, \quad f(d_{\beta_k}w)=f(w)+B_k^W
\end{align*}
for any $1 \leq k \leq g$, each black vertex $b \in V(\tilde{\Gamma})$, and each white vertex $w \in V(\tilde{\Gamma}^*)$.
\end{definition}

The periods of closed discrete differentials satisfy a discrete analogue of the classical Riemann bilinear identity. Additionally, for any given set of discrete periods, there exists a unique discrete harmonic differential with these periods. We provide a proof of this theorem in \cite{BoG17}, slightly modified below.

\begin{theorem}\label{th:harmonic_existence}
Let $A_k^B, B_k^B, A_k^W, B_k^W$, $1 \leq k \leq g$, be $4g$ given complex numbers. Then, there exists a unique discrete harmonic differential $\omega$ with these black and white periods.
\end{theorem}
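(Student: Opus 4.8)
The plan is to prove the statement by a discrete Hodge--de Rham argument, exploiting that a closed discrete differential $\omega$ of type $\Diamond$ is nothing but a pair consisting of a closed discrete one-form on the black graph $\Gamma$ and a closed discrete one-form on the white graph $\Gamma^*$. Indeed, every oriented edge of $X$ is parallel to exactly one black or one white diagonal, and closedness of $\omega$ around the parallelogram faces $F_Q$ is automatic for type $\Diamond$, so $d\omega = 0$ amounts to closedness of the black part on $\Gamma$ and of the white part on $\Gamma^*$ separately. Under this identification the black periods $A_k^B, B_k^B$ depend only on the black part and the white periods $A_k^W, B_k^W$ only on the white part, and --- since the value of a closed one-form on a cycle depends only on its class in $H_1(\Sigma,\mathbb{Z})$ --- the period map factors through $H^1(\Gamma) \oplus H^1(\Gamma^*) \cong \mathbb{C}^{2g} \oplus \mathbb{C}^{2g}$.

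For existence I would first produce some closed differential $\omega_0$ of type $\Diamond$ realizing the prescribed $4g$ periods. Because the period maps $H^1(\Gamma) \to \mathbb{C}^{2g}$ and $H^1(\Gamma^*) \to \mathbb{C}^{2g}$ are isomorphisms, I can prescribe the black periods by a closed one-form on $\Gamma$ and, \emph{independently}, the white periods by a closed one-form on $\Gamma^*$, and then read the two off as the black and white parts of a single $\omega_0$. It remains to correct $\omega_0$ to a harmonic differential without changing its periods. To this end I would seek $f : V(\Lambda) \to \mathbb{C}$ with $\omega := \omega_0 - df$ co-closed; since subtracting the exact form $df$ leaves every period unchanged, $\omega$ is then closed and co-closed, hence harmonic, with the required periods. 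The condition $\delta\omega = 0$ is the discrete Poisson equation $\Delta f = \delta\omega_0$ on $V(\Lambda)$, and by discrete Stokes' theorem the right-hand side integrates to zero over the black and over the white vertices, which is precisely the solvability condition for $\Delta$. Equivalently, one minimizes the discrete Dirichlet energy over the finite-dimensional affine space $\omega_0 + \{df\}$, the Euler--Lagrange condition $\langle \omega, df\rangle = 0$ for all $f$ being co-closedness via the discrete Green identity.

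For uniqueness, suppose $\omega_1,\omega_2$ are harmonic with the same black and white periods. Then $\eta := \omega_1 - \omega_2$ is harmonic with all $4g$ periods equal to zero. A closed one-form with vanishing periods on a basis of $H_1(\Sigma,\mathbb{Z})$ is exact, so the black and white parts of $\eta$ are exact on $\Gamma$ and on $\Gamma^*$, respectively; hence $\eta = dg$ for a single function $g : V(\Lambda) \to \mathbb{C}$. Being also co-closed, $g$ satisfies $\Delta g = \delta dg = \delta\eta = 0$, and pairing with $g$ yields $\|dg\|^2 = \langle \delta dg, g\rangle = 0$, whence $dg = 0$ and $\eta = 0$.

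The crux of the argument is the positive-definiteness of the discrete Dirichlet energy, equivalently the statement that the discrete Laplacian has only the locally constant functions in its kernel; this is where the geometric hypothesis enters, as it relies on $\operatorname{Re}\rho_Q > 0$ for every quadrilateral. I also expect the bookkeeping of the black/white splitting to require care: the Laplacian decouples into independent operators on $\Gamma$ and on $\Gamma^*$, so its kernel is two-dimensional (one constant on each graph) rather than one-dimensional, which is exactly what makes the four families of periods genuinely independent and accounts for $4g$ rather than $2g$ degrees of freedom. The factor-of-two conventions relating $2A_k = A_k^B + A_k^W$ to the medial-graph integrals must be tracked through, but these are routine once the Hodge-theoretic skeleton above is in place.
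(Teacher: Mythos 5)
Your proposal is correct and is in essence the same argument as the paper's: the paper also minimizes the discrete Dirichlet energy over the affine space of closed type-$\Diamond$ forms with the prescribed periods (presented there as $df$ for multi-valued functions $f$ on $V(\tilde{\Lambda})$ with those black and white periods, which plays exactly the role of your $\omega_0 + \{df\}$), reads off co-closedness from the Euler--Lagrange condition $\langle dM_0, df'\rangle = 0$, and gets uniqueness from the discrete Liouville theorem, i.e.\ the same positive-definiteness of the energy that you correctly identify as the crux. The only presentational difference is that you build the closed representative explicitly via the cohomology of $\Gamma$ and $\Gamma^*$, whereas the paper encodes this in the existence of multi-valued functions with independently prescribed black and white periods; like the paper, a complete write-up should first reduce to real periods so that the energy is a genuine nonnegative quadratic form.
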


\begin{proof}
We will prove the statement for real periods. Consider the vector space $R$ of all multi-valued functions $f: V(\tilde{\Lambda}) \to \mathbb{R}$ that have the given black and white periods. For any such function $f$, $df$ is a discrete differential of type $\Diamond$ on $(\Sigma, \Lambda)$. The discrete Dirichlet energy $E_\Diamond(f) := \langle df, df \rangle$ is convex, quadratic, and nonnegative \cite{BoG17}, which implies the existence of a minimum $M_0: V(\tilde{\Lambda}) \to \mathbb{R}$.

Any element $f \in \mR$ can be written as $f = M_0 + f'$, where $f': V(\Lambda) \to \mathbb{R}$. For $\lambda \in \mathbb{R}$, we have \[E_\Diamond(f)=E_\Diamond(_0)+2\lambda \langle dM_0, df'\rangle+\lambda^2 E_\Diamond(f').\] Since $M_0$ is the minimizer, it follows that $\langle dM_0, df' \rangle = 0$ for all functions $f': V(\Lambda) \to \mathbb{R}$.

Let $\omega := df$. The discrete differential $\omega$ is of type $\Diamond$ and has the required periods. Since $\delta$ is the formal adjoint of $d$, we have \[0=\langle \omega, df' \rangle = \langle \delta \omega, f' \rangle = \langle \delta dM_0, f' \rangle = - \langle \triangle M_0, f' \rangle, \] where $\triangle M_0$ is well-defined on $(\Sigma, \Lambda)$. Therefore, we have $\delta \omega = 0 = d\omega$ and $\triangle M_0 = 0$, indicating that $M_0$ and $\omega$ are discrete harmonic. The uniqueness of $\omega$ follows from discrete Liouville's Theorem.
\end{proof}

\begin{corollary}\label{cor:harmonic}
If $\omega$ is a discrete harmonic differential, then it has the minimal discrete Dirichlet energy $\langle \omega, \omega \rangle$ among all discrete one-forms with the same black and white periods. Furthermore, $\langle \omega, df \rangle = 0$ for all $f: V(\Lambda) \to \mathbb{C}$.
\end{corollary}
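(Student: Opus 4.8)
The plan is to establish the two assertions in the order they are stated, since the orthogonality relation $\langle \omega, df\rangle = 0$ is precisely the ingredient that reduces the minimization claim to a one-line Pythagorean computation. I would begin with this orthogonality, because it is a direct consequence of harmonicity and in fact already appears in the final display of the proof of Theorem~\ref{th:harmonic_existence}.

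For the second statement, let $f: V(\Lambda)\to\mathbb{C}$ be arbitrary, so that $df$ is a well-defined closed discrete differential of type $\Diamond$ on the compact surface $(\Sigma,\Lambda)$. Because $\omega$ is discrete harmonic, it satisfies $\delta\omega = 0$ in addition to $d\omega = 0$. Using that $\delta = -\star d \star$ is the formal adjoint of $d$ with respect to the discrete scalar product, I would write
\[
\langle \omega, df\rangle = \langle \delta\omega, f\rangle = 0,
\]
which is exactly the computation carried out at the end of the proof of Theorem~\ref{th:harmonic_existence}. Since the scalar product is Hermitian, this also yields $\langle df, \omega\rangle = 0$.

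For the minimality statement I would compare $\omega$ with an arbitrary competitor. The comparison class consists of closed discrete differentials of type $\Diamond$ sharing the black and white periods of $\omega$, since the periods in question are only defined for such differentials. Let $\eta$ be such a form. Integrating $\eta$ along paths on the universal cover $(\tilde{\Sigma},\tilde{\Lambda})$ — legitimate because $\eta$ is closed and the cover is simply connected — produces a multi-valued primitive whose black and white periods equal those of $\eta$, hence those of $\omega$. Subtracting the corresponding primitive of $\omega$ yields a single-valued function $f: V(\Lambda)\to\mathbb{C}$ with $\eta - \omega = df$. Expanding the Hermitian, nonnegative Dirichlet energy and invoking the orthogonality just proved, the cross terms vanish and
\[
\langle \eta, \eta\rangle = \langle \omega, \omega\rangle + \langle df, df\rangle \ge \langle \omega, \omega\rangle,
\]
which establishes minimality.

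The main obstacle, and the only step requiring care, is the representation $\eta - \omega = df$ with $f$ single-valued: it is the discrete counterpart of the fact that a closed one-form whose periods all vanish is exact. I would justify it through the integration-on-the-cover argument above, whose crux is that matching \emph{all} black and white periods (and hence, via $2A_k = A_k^B + A_k^W$ and $2B_k = B_k^B + B_k^W$, the ordinary $a_k$- and $b_k$-periods) forces the difference of primitives to descend to a genuine function on $V(\Lambda)$. The remaining ingredients — closedness and type $\Diamond$ of $\omega$, together with the nonnegativity and the Hermitian expansion of the Dirichlet energy — are all either definitional or recorded earlier in the excerpt.
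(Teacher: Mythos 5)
Your proposal is correct and follows the argument the paper intends: the corollary is left unproved as an immediate consequence of the proof of Theorem~\ref{th:harmonic_existence}, whose final display is exactly your adjointness computation $\langle \omega, df\rangle = \langle \delta\omega, f\rangle = 0$, and minimality then follows by the standard Pythagorean expansion after writing a competitor as $\omega + df$ with $f$ single-valued (the integration-on-the-cover step you rightly identify as the only point needing care, and which is justified precisely because matching all black and white periods makes the primitive descend up to the two integration constants on $\Gamma$ and $\Gamma^*$). Your reading of the comparison class as closed type-$\Diamond$ differentials is also the correct interpretation, since the black and white periods are only defined for such forms.
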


An immediate consequence of Theorem~\ref{th:harmonic_existence} is that the complex vector space of discrete holomorphic differentials has dimension $2g$. Discrete periods behave similarly to their continuous counterparts, and the bipartiteness of $\Lambda$ leads to the splitting into black and white periods. This implies the following:

\begin{theorem}\label{th:holomorphic_existence}
For any $2g$ complex numbers $A_k^B, A_k^W$, $1 \leq k \leq g$, there exists exactly one discrete holomorphic differential $\omega$ with these black and white $a$-periods.

For any $4g$ real numbers $\text{Re}(A_k^B), \text{Re}(B_k^B), \text{Re}(A_k^W), \text{Re}(B_k^W)$, there exists exactly one discrete holomorphic differential $\omega$ such that its black and white periods have these real parts.
\end{theorem}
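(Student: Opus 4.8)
The plan is to read both assertions as the statement that a suitable period map on the $2g$-dimensional complex vector space $H$ of discrete holomorphic differentials is a linear isomorphism. For the first part consider the $\mathbb{C}$-linear map $P_1\colon H \to \mathbb{C}^{2g}$, $\omega \mapsto (A_1^B,\dots,A_g^B,A_1^W,\dots,A_g^W)$, and for the second the $\mathbb{R}$-linear map $P_2\colon H \to \mathbb{R}^{4g}$, $\omega \mapsto (\mathrm{Re}\,A_k^B,\mathrm{Re}\,B_k^B,\mathrm{Re}\,A_k^W,\mathrm{Re}\,B_k^W)_k$. Since $\dim_{\mathbb{C}} H = 2g$ and hence $\dim_{\mathbb{R}} H = 4g$, each map goes between spaces of equal (complex, resp.\ real) dimension, so in both cases it suffices to prove injectivity, i.e.\ a vanishing lemma: a discrete holomorphic differential whose prescribed periods all vanish must itself be zero.

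For $P_2$ I would exploit that a discrete holomorphic $\omega$ decomposes as $\omega = \alpha + i\star\alpha$ with $\alpha := \mathrm{Re}\,\omega$ a real discrete harmonic differential (holomorphicity is $\star\omega = -i\omega$, and $\star\star = -1$ on one-forms). Because $\alpha$ is real, each of its black and white $a$- and $b$-periods is real, and these coincide with the real parts of the corresponding periods of $\omega$. Thus the hypothesis $P_2\omega = 0$ says precisely that the real harmonic differential $\alpha$ has all black and white $a$- and $b$-periods equal to $0$; by the uniqueness part of Theorem~\ref{th:harmonic_existence} (which is proved for real periods) we get $\alpha = 0$ and hence $\omega = \alpha + i\star\alpha = 0$. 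The same decomposition also gives existence directly: starting from prescribed real data, Theorem~\ref{th:harmonic_existence} produces a real harmonic $\alpha$ with those periods, and $\omega := \alpha + i\star\alpha$ is then holomorphic with the required real parts.

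For $P_1$ the natural tool is the discrete Riemann bilinear identity together with positive definiteness of the discrete Dirichlet product. For holomorphic $\omega$ one has $\langle \omega,\omega\rangle = i\iint_\Sigma \omega\wedge\bar\omega$, and the bilinear identity expresses $\iint_\Sigma \omega\wedge\bar\omega$ as a sum over $k$ of products of $a$- and $b$-periods of $\omega$ and of $\bar\omega$, with the bipartite structure coupling black periods to white ones. The key observation is that every summand carries exactly one $a$-period factor, of either $\omega$ or $\bar\omega$; since $A_k^B(\bar\omega) = \overline{A_k^B(\omega)}$ and likewise for the white $a$-periods, the hypothesis $A_k^B(\omega) = A_k^W(\omega) = 0$ for all $k$ annihilates every term. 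Hence $\langle\omega,\omega\rangle = i\iint_\Sigma\omega\wedge\bar\omega = 0$, and positive definiteness forces $\omega = 0$. Injectivity of $P_1$ together with the dimension count then yields the asserted bijection.

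The step I expect to require the most care is the bookkeeping in the third paragraph: fixing the precise form of the discrete Riemann bilinear identity on the medial graph (with the correct black/white decoration and the factor-$2$ conventions of Section~\ref{sec:differential}) and the sign in $\langle\omega,\omega\rangle = i\iint_\Sigma\omega\wedge\bar\omega$, so that one can verify rigorously that all terms vanish. By contrast the argument for $P_2$ is elementary once the decomposition $\omega = \mathrm{Re}\,\omega + i\star\,\mathrm{Re}\,\omega$ and the reality of the periods of a real differential are in place; there the only thing to check is that $\mathrm{Re}\,\omega$ is indeed harmonic, which follows from holomorphicity implying harmonicity.
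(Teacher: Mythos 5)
Your proposal is correct and follows the route the paper intends: the paper states this theorem as an immediate consequence of Theorem~\ref{th:harmonic_existence} without writing out the argument, and the two ingredients you supply are exactly the ones it uses elsewhere --- the decomposition $\omega=\re\omega+i\star\re\omega$ of a discrete holomorphic differential into a real harmonic form and its conjugate appears in the proof of Lemma~\ref{lem:period_orthodiagonal}, and the discrete Riemann bilinear identity you invoke for the $a$-period statement is displayed explicitly in the proof of Lemma~\ref{lem:energy_discrete}, where indeed every summand contains exactly one $a$-period factor. Your bookkeeping of the black/white coupling and of the sign in $\langle\omega,\omega\rangle=i\iint\omega\wedge\bar\omega$ matches that formula, so the injectivity-plus-dimension-count argument goes through as you describe.
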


\begin{definition}
Let $\omega_k^B$, $1 \leq k \leq g$, be the unique discrete holomorphic differential with a black $a_j$-period $\delta_{jk}$ and vanishing white $a$-periods. Let $\omega_k^W$, $1 \leq k \leq g$, be the unique discrete holomorphic differential with a white $a_j$-period $\delta_{jk}$ and vanishing black $a$-periods. We refer to the basis of these $2g$ discrete differentials as the \textit{canonical basis of discrete holomorphic differentials}.

We define the $(g \times g)$-matrices $\Pi^{B,B}, \Pi^{W,B}, \Pi^{B,W}, \Pi^{W,W}$ with entries
\begin{align*}\Pi^{B,B}_{jk}:=2\int_{Bb_j}\omega^B_k, \quad \Pi^{W,B}_{jk}:=2\int_{Wb_j}\omega^B_k,\quad \Pi^{B,W}_{jk}:=2\int_{Bb_j}\omega^W_k, \quad \Pi^{W,W}_{jk}:=2\int_{Wb_j}\omega^W_k.
\end{align*} 

The \textit{complete discrete period matrix} is the $(2g\times 2g)$-matrix defined by \[\tilde{\Pi}:=\left( \begin{matrix} \Pi^{B,W} & \Pi^{B,B}\\ \Pi^{W,W} & \Pi^{W,B}\end{matrix}\right).\]
\end{definition}

We also define the continuous period matrix $\Pi_{\Sigma}$ of $\Sigma$ using the same basis of homology. However, it is the discrete period matrix $\Pi = (\Pi^{B,W} + \Pi^{B,B} + \Pi^{W,W} + \Pi^{W,B})/2$ that we mainly compare with $\Pi_{\Sigma}$. $\Pi$ can be directly defined using discrete differentials with equal black and white $a$-periods.

\begin{definition}
The unique set of $g$ discrete holomorphic differentials $\omega_k$ satisfying $2\int_{Ba_j} \omega_k =\delta_{jk}$ and $2\int_{Wa_j} \omega_k = \delta_{jk}$ for all $1 \leq j,k \leq g$ is called the \textit{canonical set}. The $(g \times g)$-matrix $\left(\Pi_{jk}\right)_{j,k=1}^g$ with entries $\Pi_{jk} = \int_{b_j} \omega_k$ is the \textit{discrete period matrix} of the discrete Riemann surface $(\Sigma,\Lambda,z)$.
\end{definition}

\begin{example}
Let us consider a discretization of a flat torus $\Sigma = \mathbb{C}/(\mathbb{Z} + \mathbb{Z}\tau)$ of modulus $\tau \in \mathbb{C}$ with $\text{Im}(\tau) > 0$. In this case, the continuous period of $\Sigma$ is $\tau$. On the medial graph $X$, the differential $dz$ is globally defined and discrete holomorphic. Thus, $\Pi$ is equal to the $b$-period of $dz$, which is also $\tau$. Note that the black and white $b$-periods do not have to coincide if the $a$-periods do not coincide \cite{BoSk12, BoG17}.
\end{example}

\begin{theorem}\label{th:period_matrix}
The matrices $\Pi$ and $\tilde{\Pi}$ are symmetric, and their imaginary parts are positive definite.
\end{theorem}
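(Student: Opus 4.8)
The plan is to deduce both assertions from the discrete Riemann bilinear identity recalled just before Theorem~\ref{th:harmonic_existence}, which expresses the wedge pairing $\iint_\Sigma \omega \wedge \omega'$ of two closed discrete differentials of type $\Diamond$ as a fixed bilinear combination of their $a$- and $b$-periods, and which in its refined form involves the black and white periods $A_k^B, A_k^W, B_k^B, B_k^W$. Everything then rests on two facts about discrete holomorphic differentials on the medial graph, both inherited from the adaptation of the continuous formalism: that holomorphicity is equivalent to $\star\omega = -i\omega$ (with the sign convention making $dz$ holomorphic), and that the discrete Hodge star obeys $\star^2 = -1$ on one-forms together with the symmetry $\alpha\wedge\star\beta = \beta\wedge\star\alpha$. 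From these, writing $\omega' = i\star\omega'$ and using the symmetry of the pairing gives the vanishing $\omega\wedge\omega' = 0$ for two holomorphic differentials, while $\star\bar\omega = i\bar\omega$ gives $\iint_\Sigma \omega\wedge\bar\omega = -i\langle\omega,\omega\rangle$, tying the self-pairing to the discrete Dirichlet energy.

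First I would establish symmetry. Applying the bilinear identity to two members $\omega_j,\omega_k$ of the canonical set and using $\omega_j\wedge\omega_k = 0$ collapses the right-hand side to the antisymmetric combination of periods; substituting the canonical values (the $a$-periods are $\delta_{lj},\delta_{lk}$ and the $b$-periods are $\Pi_{lj},\Pi_{lk}$) leaves exactly $\Pi_{jk}-\Pi_{kj}=0$, so $\Pi=\Pi^{\mathrm T}$. The identical computation carried out for the canonical basis $\omega_j^B,\omega_j^W$ with the black/white identity yields the symmetry of $\tilde\Pi$: the blocks $\Pi^{B,W}$ and $\Pi^{W,B}$ come out symmetric, while the remaining blocks satisfy $(\Pi^{B,B})^{\mathrm T}=\Pi^{W,W}$, which is precisely $\tilde\Pi=\tilde\Pi^{\mathrm T}$ for the stated block arrangement.

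Next, for positive definiteness of the imaginary part, I would take an arbitrary nonzero $\omega = \sum_k c_k\omega_k$ and apply the bilinear identity to the pair $(\omega,\bar\omega)$. The left side equals $-i\langle\omega,\omega\rangle$, a negative purely imaginary number, since the discrete $L^2$-product is positive definite and the canonical set is linearly independent, so $\omega\neq 0$ for $c\neq 0$. On the right side, inserting $A_l(\omega)=c_l$ and $B_l(\omega)=(\Pi c)_l$ and separating real and imaginary parts via the symmetry of $\Pi$ just proved, the expression reduces to $-2i\,\bar c^{\,\mathrm T}(\mathrm{Im}\,\Pi)c$. Comparing both sides gives $\bar c^{\,\mathrm T}(\mathrm{Im}\,\Pi)c = \tfrac12\langle\omega,\omega\rangle > 0$, so $\mathrm{Im}\,\Pi$ is positive definite. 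For $\tilde\Pi$ I would run the same argument with $\omega=\sum_k(c_k^B\omega_k^B+c_k^W\omega_k^W)$ ranging over the full $2g$-dimensional space of discrete holomorphic differentials; positivity of the energy of every nonzero such $\omega$ forces $\mathrm{Im}\,\tilde\Pi$ to be positive definite on $\mathbb{C}^{2g}$.

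The step I expect to be the main obstacle is the bookkeeping of the black/white refinement of the bilinear identity, so that the refined period products assemble into exactly the block pattern of $\tilde\Pi$; in particular one must check that the pairing of two holomorphic differentials still vanishes at this refined level and that the self-pairing splits into black and white contributions matching the off-diagonal and diagonal blocks respectively. By contrast, once the unrefined identity and the two Hodge-star facts are in place, the statements for $\Pi$ follow immediately, and the positive-definiteness argument is structurally identical for $\Pi$ and $\tilde\Pi$.
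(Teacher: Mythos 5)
Your proposal is correct. The paper itself states Theorem~\ref{th:period_matrix} without proof (deferring to \cite{BoG17}), but the argument you give is exactly the standard one and uses precisely the machinery the paper deploys elsewhere: the black/white discrete Riemann bilinear identity, $\star\omega=-i\omega$ for holomorphic $\omega$, and $\iint\omega\wedge\bar\omega=-i\langle\omega,\omega\rangle$ all appear verbatim in the proof of Lemma~\ref{lem:energy_discrete}. The bookkeeping step you flagged as the likely obstacle does work out as you predicted: pairing $\omega_j^B$ with $\omega_k^B$ (resp.\ $\omega_j^W$ with $\omega_k^W$) in the refined identity gives the symmetry of the diagonal blocks $\Pi^{W,B}$ and $\Pi^{B,W}$, pairing $\omega_j^B$ with $\omega_k^W$ gives $(\Pi^{B,B})^{\mathrm T}=\Pi^{W,W}$, and these three relations are exactly the symmetry of $\tilde{\Pi}$ for the stated block arrangement; the self-pairing computation likewise reduces to $\bar{A}^{\mathrm T}(\im\tilde{\Pi})A=\langle\omega,\omega\rangle>0$ with $A$ the vector of black and white $a$-periods.
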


Since the block matrices in the diagonal of a symmetric positive definite matrix are themselves symmetric and positive definite, we also have the following corollary:

\begin{corollary}\label{cor:period_submatrix}
The imaginary parts of the symmetric matrices $\Pi^{W,B}$ and $\Pi^{B,W}$ are positive definite.
\end{corollary}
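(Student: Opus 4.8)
The plan is to derive everything directly from Theorem~\ref{th:period_matrix}, which already supplies the two facts we need about the complete discrete period matrix $\tilde{\Pi}$: it is symmetric, and its imaginary part is positive definite. The corollary then reduces to the elementary observation that $\Pi^{B,W}$ and $\Pi^{W,B}$ occupy precisely the two diagonal blocks of $\tilde{\Pi}$, combined with the standard linear-algebra principle that symmetry and positive definiteness are inherited by principal submatrices.

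First I would establish the symmetry of the two diagonal blocks. Writing the block identity $\tilde{\Pi} = \tilde{\Pi}^T$ out explicitly gives
\[
\begin{pmatrix} \Pi^{B,W} & \Pi^{B,B}\\ \Pi^{W,W} & \Pi^{W,B}\end{pmatrix}
=
\begin{pmatrix} (\Pi^{B,W})^T & (\Pi^{W,W})^T\\ (\Pi^{B,B})^T & (\Pi^{W,B})^T\end{pmatrix},
\]
and comparing the two diagonal blocks yields $(\Pi^{B,W})^T = \Pi^{B,W}$ and $(\Pi^{W,B})^T = \Pi^{W,B}$. (The off-diagonal comparison gives the auxiliary relation $(\Pi^{W,W})^T = \Pi^{B,B}$, which is not needed here.)

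Next I would pass to imaginary parts. Since $\tilde{\Pi}$ is complex symmetric, $\operatorname{Im}(\tilde{\Pi})$ is a real symmetric matrix, and Theorem~\ref{th:period_matrix} asserts that it is positive definite. Its diagonal blocks are exactly $\operatorname{Im}(\Pi^{B,W})$ and $\operatorname{Im}(\Pi^{W,B})$, which are the principal submatrices of $\operatorname{Im}(\tilde{\Pi})$ obtained by restricting to the first $g$ and to the last $g$ indices, respectively. For any real symmetric positive definite matrix $A$ and any index set $I$, the restriction $A_{II}$ satisfies $x^T A_{II} x = \hat{x}^T A \hat{x} > 0$ for every nonzero $x$, where $\hat{x}$ denotes the zero-extension of $x$ to all indices; hence $A_{II}$ is positive definite. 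Applying this to the two diagonal index sets yields the positive definiteness of $\operatorname{Im}(\Pi^{B,W})$ and $\operatorname{Im}(\Pi^{W,B})$, and together with the symmetry established above this proves the corollary.

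There is no genuine obstacle here, as the entire content is inherited from Theorem~\ref{th:period_matrix}. The only point requiring care is bookkeeping: one must keep straight that the diagonal blocks of $\tilde{\Pi}$ are $\Pi^{B,W}$ and $\Pi^{W,B}$ (rather than $\Pi^{B,B}$ and $\Pi^{W,W}$), so that it is precisely these two matrices, and not the off-diagonal blocks, whose symmetry and definiteness are controlled by the argument.
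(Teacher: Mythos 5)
Your proposal is correct and follows exactly the paper's own (one-line) argument: the paper derives the corollary from Theorem~\ref{th:period_matrix} by observing that $\Pi^{B,W}$ and $\Pi^{W,B}$ are the diagonal blocks of the symmetric matrix $\tilde{\Pi}$ with positive definite imaginary part, and that diagonal blocks of a symmetric positive definite matrix inherit both properties. You merely spell out the standard zero-extension argument that the paper leaves implicit.
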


\begin{lemma}\label{lem:period_orthodiagonal}
Assuming all quadrilaterals of $(\Sigma,\Lambda)$ are orthodiagonal (i.e., $\rho_Q$ is real for all $Q \in F(\Lambda)$), we have that $\Pi^{W,B}$ and $\Pi^{B,W}$ are purely imaginary, while $\Pi^{B,B}$ and $\Pi^{W,W}$ are real matrices.
\end{lemma}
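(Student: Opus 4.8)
The plan is to exploit the reality of the discrete complex structure, $\rho_Q\in\mathbb{R}$, by constructing for each canonical holomorphic differential an explicit complex-conjugation symmetry on the universal cover that carries an extra sign flip on the white vertices. Fix $k$ and let $F\colon V(\tilde\Lambda)\to\mathbb{C}$ be a multi-valued discrete holomorphic primitive of $\omega^B_k$, i.e.\ $dF=\omega^B_k$. Such an $F$ exists because $\omega^B_k$ is closed, and by the correspondence between integrals of closed differentials and multi-valued functions the black and white periods of $F$ coincide with the black and white periods of $\omega^B_k$. Being the primitive of a holomorphic differential, $F$ is a discrete holomorphic function, so $F(w_+)-F(w_-)=i\rho_Q\,(F(b_+)-F(b_-))$ on every quadrilateral $Q$.

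Next I would define $G\colon V(\tilde\Lambda)\to\mathbb{C}$ by $G:=\overline{F}$ on the black vertices and $G:=-\overline{F}$ on the white vertices. The load-bearing computation is that $G$ is again discrete holomorphic: conjugating the relation for $F$ and using $\overline{\rho_Q}=\rho_Q$ — precisely the orthodiagonality hypothesis — gives $\overline{F}(w_+)-\overline{F}(w_-)=-i\rho_Q(\overline{F}(b_+)-\overline{F}(b_-))$, and the minus sign introduced by taking $G=-\overline{F}$ on the white vertices turns this back into $G(w_+)-G(w_-)=i\rho_Q(G(b_+)-G(b_-))$. Reading off the periods of $G$, the differential $dG$ is holomorphic and its black periods are the complex conjugates of those of $\omega^B_k$, while its white periods are the negatives of the complex conjugates of those of $\omega^B_k$.

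I would then invoke the uniqueness part of Theorem~\ref{th:holomorphic_existence}. Since $\omega^B_k$ has black $a$-periods $\delta_{jk}\in\mathbb{R}$ and vanishing white $a$-periods, $dG$ has exactly the same black and white $a$-periods, whence $dG=\omega^B_k$. Comparing all periods shows that every black period of $\omega^B_k$ equals its own conjugate and every white period equals minus its own conjugate; applied to the $b$-periods this yields that $\Pi^{B,B}$ is real and $\Pi^{W,B}$ is purely imaginary. Running the identical argument for $\omega^W_k$, whose black $a$-periods vanish and whose white $a$-periods equal $\delta_{jk}$, the differential $dG$ now has black $a$-periods $0$ and white $a$-periods $-\delta_{jk}$, so uniqueness forces $dG=-\omega^W_k$. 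The resulting sign flip makes the black periods of $\omega^W_k$ purely imaginary and its white periods real, giving that $\Pi^{B,W}$ is purely imaginary and $\Pi^{W,W}$ is real.

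The period bookkeeping and the sign tracking in the final step are routine. The one genuinely essential point, which I expect to be the crux, is the verification that $G$ is discrete holomorphic: the sign flip imposed on the white vertices must exactly cancel the sign produced by conjugating $i\rho_Q$, and this cancellation occurs if and only if $\rho_Q$ is real. This is exactly why the reality and imaginarity of the blocks is special to the orthodiagonal case and fails for a general discrete complex structure.
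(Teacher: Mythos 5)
Your proposal is correct, but it takes a genuinely different route from the paper. The paper works at the level of one-forms and the discrete Hodge star: using Mercat's formulas it shows that for $\phi_Q=\pi/2$ the operator $\star$ exchanges forms supported on black edges with forms supported on white edges and preserves reality, decomposes $\omega=\alpha+i\star\alpha$ where $\alpha$ is the restriction of $\omega$ to black edges, proves $\alpha$ is discrete harmonic, and then identifies $\omega_k^B$ with the holomorphic form $\re\alpha+i\star\re\alpha$ via the uniqueness of holomorphic differentials with prescribed $a$-periods. You instead build an anti-holomorphic involution at the level of primitives, $G=\overline{F}$ on black and $G=-\overline{F}$ on white vertices, and your key computation — that conjugating the discrete Cauchy--Riemann relation and flipping the sign on white vertices returns a discrete holomorphic function exactly when $\overline{\rho_Q}=\rho_Q$ — is right, as is the period bookkeeping and the appeal to uniqueness in Theorem~\ref{th:holomorphic_existence}. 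Your argument is shorter and isolates precisely where orthodiagonality enters, while the paper's argument yields extra structural information as a byproduct (harmonicity of the black and white restrictions and the explicit representation $\omega_k^B=\re\alpha_k^B+i\star\re\alpha_k^B$). The one step you should not leave implicit is the claim that a primitive $F$ of a discrete holomorphic differential is a discrete holomorphic function: it does hold, because $\omega=p\,dz$ locally gives $F(w_+)-F(w_-)=p\,(w_+-w_-)$ and $F(b_+)-F(b_-)=p\,(b_+-b_-)$, so the ratio is $i\rho_Q$ by the definition of the discrete complex structure, but a line of justification is needed since the lemma is otherwise resting entirely on this equivalence.
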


\begin{proof}
In \cite{BoG17}, we established a connection between our definition of the discrete Hodge star on discrete one-forms and Mercat's definition given in \cite{Me08}. Specifically, if $\omega$ is a discrete one-form of type $\Diamond$ defined on the oriented edges of the boundary of the face of $X$ corresponding to $Q$, then the equations governing the discrete Hodge star in a discrete chart $z=z_Q$ are given by:
\begin{align*}
\int_e \star\omega = \cot\left(\phi_Q\right) \int_e \omega - \frac{|z(e)|}{|z(e^*)| \sin\left(\phi_Q\right)}\int_{e^*}\omega \quad \text {and} \quad
\int_{e^*} \star\omega = \frac{|z(e^*)|}{|z(e)| \sin\left(\varphi_Q\right)} \int_e \omega - \cot\left(\phi_Q\right)\int_{e^*}\omega,
\end{align*}
where $\phi_Q$ is the angle under which the diagonals of $Q$ intersect, and $e$ and $e^*$ are oriented edges parallel to the black and white diagonal of $Q$, respectively, with $\im \left(e^*/e\right) > 0$.

For orthodiagonal quadrilaterals where $\rho_Q$ is real, $\phi_Q = \pi/2$, simplifying the equations to:
\begin{align*}
\int_e \star\omega = -\frac{|z(e)|}{|z(e^*)|}\int_{e^*}\omega \quad \text{and} \quad \int_{e^*} \star\omega = \frac{|z(e^*)|}{|z(e)|} \int_e \omega.
\end{align*}
In particular, if $\omega$ is a discrete one-form that is zero on white edges, then $\star\omega$ is zero on black edges, and vice versa. Additionally, if $\omega$ is real, then $\star\omega$ is also real.

Consider a discrete holomorphic differential $\omega$ and let $\alpha$ be the discrete one-form that is zero on white edges and agrees with $\omega$ on black edges. We define $\beta:=\omega-\alpha$, which is zero on black edges. Both $\alpha$ and $\beta$ are of type $\Diamond$. From $\omega=-i\star\omega$, we have $\star\alpha+\star\beta=-i\alpha-i\beta$. Since only $\star\alpha$ and $-i\beta$ are non-zero on white edges, we find that $\beta=i\star\alpha$.

We have $d\alpha+id\star\alpha=d\omega=0$. Given that $\alpha$ is zero on white edges and $\star\alpha$ on black edges, we can apply discrete Stokes' theorem to conclude that $d\alpha$ is zero on faces of the medial graph corresponding to white vertices, and $d\star\alpha$ is zero on faces corresponding to black vertices. This implies $d\alpha=d\star\alpha=0$. Thus, $\alpha$ is a discrete harmonic form. Moreover, since $\star \bar{\alpha}=\overline{\star \alpha}$, both $\re \alpha$ and $\im \alpha$ are discrete harmonic forms as well. This implies that $\omega=\varrho+\iota$, where \[\varrho:=\re \alpha + i\star \re \alpha, \quad \iota:=i\im \alpha -\star \im \alpha.\]

The forms $\varrho$ and $\iota$ are both discrete holomorphic. The black periods of $\varrho$ correspond to the real parts of the black periods of $\omega$, while the white periods of $\varrho$ correspond to $i$ times the imaginary parts of the white periods of $\omega$. Similarly, the white periods of $\iota$ correspond to the real parts of the white periods of $\omega$, and the black periods of $\iota$ correspond to $i$ times the imaginary parts of the real periods of $\omega$.

Let $\omega=\omega_k^B$, $1\leq k \leq g$, be the unique discrete holomorphic differential with a black $a_j$-period of $\delta_{jk}$ and vanishing white $a$-periods. According to our previous considerations, $\varrho=\varrho_k^B$ is a discrete holomorphic differential with the same black and white $a$-periods as $\omega_k^B$. Therefore, by Theorem \ref{th:holomorphic_existence}, we have \[\omega_k^B=\varrho_k^B=\re \alpha_k^B+i\star \re \alpha_k^B.\]

Hence, the black $b$-periods of $\omega_k^B$ are real, and its white $b$-periods are purely imaginary. We can conclude that $\re \Pi^{W,B}=0=\im \Pi^{B,B}$. Similarly, we have $\re \Pi^{B,W}=0=\im \Pi^{W,W}$.
\end{proof}

%%%%%%%%%%%%%%%%%%%%%%%%%%%%%%%

\subsection{Discrete Dirichlet energy as a quadratic form}\label{sec:Dirichlet}

In Section~\ref{sec:convergence1}, we will use the same underlying idea as Bobenko and Skopenkov in \cite{BoSk12} to analyze the convergence of the discrete period matrix. We deduce the convergence of the discrete period matrix from the convergence of the discrete Dirichlet energy of discrete harmonic differentials with given periods (or equivalently, of discrete harmonic functions on the universal cover $(\tilde{\Sigma},\tilde{\Lambda})$ with given periods). To do this, we represent both the continuous and discrete Dirichlet energy of continuous and discrete harmonic differentials as a quadratic form in the vector of real $a$- and $b$-periods.

Before we continue, let us review the definition of the discrete gradient by Skopenkov in \cite{Sk13} and relate it to the discrete differential.

\begin{definition}
Let $Q \in F(\Lambda)$ and $f$ be a real function on its vertices. In a chart $z=z_Q$ of $Q$, we define the \textit{discrete gradient} $\nabla_Q f$ as the unique vector that satisfies, for all edges $e$ of $F_Q$: \[\nabla_Q f \cdot z(e)=\int_e df.\]
\end{definition}
\begin{remark}
The discrete gradient of a multi-valued function on $V(\tilde{\Lambda})$ is a well-defined function on $F(\Lambda)$.
\end{remark}

In the case of a smooth function $F: U \subset \mathbb{C} \to \mathbb{R}$, an elementary calculation shows that $4|\partial F| = |\nabla F|$, such that $\langle dF, dF \rangle = \int |\nabla F|^2$. The same is true in the discrete setting.

\begin{lemma}\label{lem:gradient_differential}
Let $Q \in F(\Lambda)$ and $f$ be a real function on its vertices. In a chart $z = z_Q$ of $Q$, we have \[\iint_{F(Q)} df \wedge \star d\bar{f}= \left\|\nabla_Q f\right\|^2 \textnormal{area}(Q).\]
\end{lemma}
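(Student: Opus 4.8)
The plan is to reduce the identity to the pointwise continuous relation $d\hat f \wedge \star\, d\hat f = \|\nabla \hat f\|^2\, dA$ for a suitable affine function $\hat f$, using that in a fixed chart the discrete differential $df$ has constant coefficients.

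\textbf{Reduction to an affine function.} Fix the chart $z = z_Q$. By definition $df = \partial_\Lambda f(Q)\, dz + \bar\partial_\Lambda f(Q)\, d\bar z$, and since $f$ is real-valued one has $\bar\partial_\Lambda f(Q) = \overline{\partial_\Lambda f(Q)}$. Writing $p := \partial_\Lambda f(Q)$, this gives $df = p\, dz + \bar p\, d\bar z$ with $p$ and $\bar p$ \emph{constant} on $Q$. Hence, on the image $z(Q) \subset \mC$, the one-form $df$ is exactly the differential of the real affine function $\hat f(z) := 2\re(p\, z)$; in particular $\int_e df = \int_e d\hat f$ for every edge $e$ of $F_Q$.

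\textbf{Identification of the gradient.} I claim $\nabla_Q f = \nabla \hat f$. Indeed, $\nabla_Q f$ is characterized by $\nabla_Q f \cdot z(e) = \int_e df$ on the edges of $F_Q$, whereas for the affine function $\hat f$ the increment along any segment equals $\nabla\hat f \cdot z(e)$, so that $\int_e d\hat f = \nabla \hat f \cdot z(e)$. The two identities agree on the two non-parallel edge directions of the parallelogram $F_Q$, namely those parallel to the black and to the white diagonal of $Q$, which are linearly independent because $\re \rho_Q > 0$ and therefore span $\mR^2$. Uniqueness of the solving vector then forces $\nabla_Q f = \nabla \hat f$, and in particular $\|\nabla_Q f\| = \|\nabla \hat f\|$.

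\textbf{Evaluation of the left-hand side.} It remains to compute $\iint_{F_Q} df \wedge \star\, d\bar f = \iint_{F_Q} d\hat f \wedge \star\, d\hat f$. On type-$\Diamond$ forms the discrete Hodge star reproduces the continuous one, $\star\, dz = -i\, dz$ and $\star\, d\bar z = i\, d\bar z$: substituting $\omega = dz$ into the Hodge-star formulas recalled in the proof of Lemma~\ref{lem:period_orthodiagonal} gives $\int_e \star\, dz = -i \int_e dz$ on both diagonal directions. Consequently, by the same algebra as in the smooth case, $d\hat f \wedge \star\, d\hat f$ is a fixed real multiple of $dz \wedge d\bar z$ with coefficient proportional to $\|\nabla \hat f\|^2$; integrating over $F_Q$ against the normalization $\iint_{F_Q} dz \wedge d\bar z = -4i\,\text{area}(z(Q))$, and inserting $\text{area}(z(Q)) = \text{area}(Q)$ together with $\|\nabla \hat f\| = \|\nabla_Q f\|$, reduces the statement to matching the single overall numerical constant.

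\textbf{Main obstacle.} The conceptual steps are immediate; the delicate point is precisely this normalization bookkeeping. The discrete two-form $dz \wedge d\bar z$ deliberately carries an extra factor relative to the continuous $-2i\, dA$ (reflecting that $F_Q$ covers half of $Q$), and this factor interlocks with the normalizations built into $\partial_\Lambda$ and into $\nabla_Q$; verifying that they combine to give exactly $\|\nabla_Q f\|^2\,\text{area}(Q)$, rather than a scalar multiple of it, is where the constants must be tracked with care. The most transparent way to certify the constant is to write both sides as explicit real quadratic forms in the two differences $f(b_+) - f(b_-)$ and $f(w_+) - f(w_-)$, using the chart coordinates of the diagonals, and to check that the resulting symmetric $2 \times 2$ matrices coincide.
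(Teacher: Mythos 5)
Your strategy coincides with the paper's own proof: replace $f$ by the affine function whose differential agrees with $df$ on the edges of $F_Q$, identify $\nabla_Q f$ with the smooth gradient of that function, and evaluate $\iint_{F_Q} df \wedge \star d\bar{f}$ algebraically. The first two steps are correct, and your identification of the gradient is in fact argued more explicitly than in the paper, which integrates $\nabla_Q f$ to a linear function $F$, restricts it to the vertices of $Q$, and invokes the result from \cite{BoG15} that $\partial_\Lambda$ of such a restriction equals the smooth derivative $\partial F$. Your verification that $\star dz = -i\,dz$ on both diagonal directions from the Hodge-star formulas is also correct.

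The gap is that you stop precisely at the point that constitutes the content of the lemma. You establish only that the left-hand side is a fixed real multiple of $\left\|\nabla_Q f\right\|^2 \textnormal{area}(Q)$, you correctly flag the determination of that multiple as the delicate point (the normalizations of the discrete wedge product, of $dz \wedge d\bar{z}$, of $\partial_\Lambda$, and of $\nabla_Q$ all interlock), and you then propose a method for pinning it down without carrying it out. Since the lemma asserts an exact equality --- and the constant $1$ is what makes $a(f,f) = \langle df, df\rangle = \sum_Q \left\|\nabla_Q f\right\|^2\textnormal{area}(Q)$ literally true, which is used throughout Chapters~\ref{sec:convergence1} and~\ref{sec:convergence2} to compare discrete and continuous Dirichlet energies --- an unverified constant is not a cosmetic omission here. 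The paper closes this step by computing $\iint_{F_Q} df \wedge \star d\bar{f} = 4\left|\partial_\Lambda f\right|^2 \textnormal{area}(Q)$ from the wedge-product conventions of \cite{BoG17} and then converting $\left|\partial_\Lambda f\right|$ into $\left|\nabla_Q f\right|$ via the linear interpolation $F$ of $\nabla_Q f$ and the elementary relation between $|\partial F|$ and $|\nabla F|$. To complete your proof you must execute one of the two finishing computations you sketch --- either this one, or your proposed comparison of the two sides as explicit quadratic forms in $f(b_+)-f(b_-)$ and $f(w_+)-f(w_-)$.
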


\begin{proof}
Using $df=\partial_\Lambda fdz + \bar{\partial}_\Lambda fd\bar{z}$, we compute \[\iint_{F(Q)} df \wedge \star d\bar{f}=4 |\partial_\Lambda f|^2 \textnormal{area}(Q).\] On the other hand, let us integrate $\nabla_Q f$ over $Q$. We obtain a linear function $F$ on $Q$ that we restrict to a function $f'$ on the vertices of $Q$. By definition of the discrete gradient, we have $df = df'$. We have seen in our previous work \cite{BoG15} that $\partial\Lambda f'$ agrees with the smooth derivative. In particular, \[4|\partial_\Lambda f|=4|\partial_\Lambda f'|=4|\partial F|=|\nabla F|=|\nabla_Q f|. \qedhere\]
\end{proof}

We introduce the scalar product and norm associated with the discrete Dirichlet energy for real multi-valued functions on the discrete Riemann surface $(\Sigma, \Lambda)$. Using Lemma~\ref{lem:gradient_differential}, we define the positive definite scalar product and norm as follows:
\begin{align*}
a(f, g) &:= \langle df, dg \rangle = \sum\limits_{Q\in F(\Lambda)} \int_Q \nabla_Q f \cdot \nabla_Q g,\\
\left|f\right| &:= \sqrt{a(f,f)}.
\end{align*}
In the case of smooth multi-valued functions $F, G: \tilde{\Sigma} \to \mathbb{R}$, we can also define $a(F,G)$ by replacing the discrete gradient with the smooth gradient. The norm $\left|F\right|^2$ corresponds to the Dirichlet energy of $F$. Additionally, $a(f,F)$ is well-defined.

It is worth noting that we omit the dependence of the scalar product $a$ on $\Lambda$ since it is already encoded in the discrete functions. Furthermore, $a$ is derived from an inner product of the gradients, ensuring that the Cauchy-Schwarz inequality holds true.

Next, we state the representation lemma in the continuous setting, as presented in \cite{BoSk12} (with the off-diagonal blocks interchanged). The proof of this lemma follows a similar structure to the proof of the corresponding discrete lemma, which we will present afterwards.

\begin{lemma}\label{lem:energy_continuous}
Let $\Omega$ denote the unique holomorphic differential on $\Sigma$ with real parts of its $a$- and $b$-periods given by $\mathcal{A}_1,\ldots,\mathcal{A}_g$ and $\mathcal{B}_1,\ldots,\mathcal{B}_g$, respectively. The corresponding smooth real harmonic function with these periods can be expressed as $U=\int \re(\Omega)$, up to a choice of integration constant. We consider the vector $P=(\mathcal{A}_1,\ldots,\mathcal{A}_g,\mathcal{B}_1,\ldots,\mathcal{B}g)^T \in \mathbb{R}^{2g}$. Then, the Dirichlet energy $\langle \Omega, \Omega \rangle=2 \langle dU, dU \rangle=2\|U\|^2$ can be represented as a quadratic form in terms of the period matrix $\Pi_{\Sigma}$: \[\langle \Omega, \Omega \rangle=P^T E_{\Sigma} P:= P^T \left( \begin{matrix} 2\re \Pi_{\Sigma} \left( \im \Pi_{\Sigma} \right)^{-1} \re \Pi_{\Sigma} + 2\im \Pi_{\Sigma}  &  -2\re \Pi_{\Sigma} \left( \im \Pi_{\Sigma} \right)^{-1}\\ -2\left( \im \Pi_{\Sigma} \right)^{-1} \re \Pi_{\Sigma} & 2\left( \im \Pi_{\Sigma} \right)^{-1} \end{matrix}\right)  P.\]
\end{lemma}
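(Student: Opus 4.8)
The plan is to reduce everything to the classical Riemann bilinear relations, so that $\langle dU,dU\rangle$ becomes a period pairing that can be evaluated combinatorially. First I would record the structural facts. Since $\Omega$ is holomorphic, $\star\Omega=-i\Omega$, so writing $U=\int\re\Omega$ one has $\re\Omega=dU$ and $\im\Omega=\star dU$, i.e.\ $\Omega=dU+i\star dU$. A short computation of $\star\bar\Omega=\star dU+i\,dU$ then gives $\Omega\wedge\star\bar\Omega=2\,dU\wedge\star dU$, because $dU\wedge dU$ and $\star dU\wedge\star dU$ vanish and $-\star dU\wedge dU=dU\wedge\star dU$; integrating yields the asserted identity $\langle\Omega,\Omega\rangle=2\langle dU,dU\rangle=2\|U\|^2$. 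The existence and uniqueness of $\Omega$ with prescribed real periods is the classical statement that a real harmonic $1$-form is determined by its $2g$ periods, so fixing $\re A_k=\mathcal{A}_k$ and $\re B_k=\mathcal{B}_k$ pins down $\Omega$.

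Next I would apply the Riemann bilinear relation to the two closed real $1$-forms $\re\Omega=dU$ and $\star\re\Omega=\im\Omega$:
\[
\langle dU,dU\rangle=\int_\Sigma\re\Omega\wedge\im\Omega=\sum_{k=1}^g\left(\oint_{a_k}\re\Omega\oint_{b_k}\im\Omega-\oint_{b_k}\re\Omega\oint_{a_k}\im\Omega\right)=\sum_{k=1}^g\left(\mathcal{A}_k\,\im B_k-\mathcal{B}_k\,\im A_k\right),
\]
where $A_k=\oint_{a_k}\Omega$ and $B_k=\oint_{b_k}\Omega$ are the complex periods. In vector form this reads $\langle dU,dU\rangle=\mathcal{A}^T\im B-\mathcal{B}^T\im A$, so the entire problem reduces to expressing $\im A$ and $\im B$ through $\mathcal{A}$, $\mathcal{B}$, and $\Pi_\Sigma$.

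The link to the period matrix comes from the normalized basis: expanding $\Omega=\sum_k c_k\omega_k$ with $\oint_{a_j}\omega_k=\delta_{jk}$ and $(\Pi_\Sigma)_{jk}=\oint_{b_j}\omega_k$ gives $A=c$ and $B=\Pi_\Sigma A$. Splitting $B=\Pi_\Sigma A$ into real and imaginary parts yields $\mathcal{B}=\re\Pi_\Sigma\,\mathcal{A}-\im\Pi_\Sigma\,\im A$ and $\im B=\im\Pi_\Sigma\,\mathcal{A}+\re\Pi_\Sigma\,\im A$. Because $\im\Pi_\Sigma$ is positive definite, hence invertible, the first equation solves to $\im A=(\im\Pi_\Sigma)^{-1}(\re\Pi_\Sigma\,\mathcal{A}-\mathcal{B})$, and substituting back produces $\im B$ as well.

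Finally I would insert these into $\langle\Omega,\Omega\rangle=2(\mathcal{A}^T\im B-\mathcal{B}^T\im A)$ and collect the four bilinear groupings in $\mathcal{A}$ and $\mathcal{B}$; reading off the coefficient matrices reproduces the four blocks of $E_\Sigma$, namely $2\im\Pi_\Sigma+2\re\Pi_\Sigma(\im\Pi_\Sigma)^{-1}\re\Pi_\Sigma$, $-2\re\Pi_\Sigma(\im\Pi_\Sigma)^{-1}$, $-2(\im\Pi_\Sigma)^{-1}\re\Pi_\Sigma$, and $2(\im\Pi_\Sigma)^{-1}$. Symmetry of $\re\Pi_\Sigma$ and $\im\Pi_\Sigma$ makes $E_\Sigma$ symmetric, which serves as an internal consistency check. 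I do not expect a deep obstacle: the conceptual step is recognizing that $\langle dU,dU\rangle$ is itself a period pairing amenable to the bilinear relation, and the only genuine care is needed in fixing the orientation and sign conventions (so that $\star\re\Omega=\im\Omega$) and in tracking the factor of $2$ from the $\star\bar\Omega$ computation consistently through to the final matrix.
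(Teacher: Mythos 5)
Your proposal is correct and follows essentially the same route the paper intends: the paper omits an explicit proof of this continuous lemma, stating that it parallels the proof of the discrete version (Lemma~\ref{lem:energy_discrete}), which proceeds exactly as you do — reduce $\langle\Omega,\Omega\rangle$ to $2\langle dU,dU\rangle$, apply the Riemann bilinear identity to get $\sum_k(\re A_k\,\im B_k-\re B_k\,\im A_k)$, use $B=\Pi_\Sigma A$ to solve for the imaginary parts in terms of the real parts, and substitute to read off the blocks of $E_\Sigma$. Your sign and normalization bookkeeping (including the factor of $2$ from $\Omega\wedge\star\bar\Omega=2\,dU\wedge\star dU$ and the absence of the discrete identity's factor $1/2$) is consistent with the stated matrix.
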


\begin{lemma}\label{lem:energy_discrete}
Let $\omega$ be the unique discrete holomorphic differential on $(\Sigma,\Lambda)$ with real parts of its black $a$- and $b$-periods given by $\mathcal{A}^B_1,\ldots,\mathcal{A}^B_g$ and $\mathcal{B}^B_1,\ldots,\mathcal{B}^B_g$, and real parts of its white $a$- and $b$-periods given by $\mathcal{A}^W_1,\ldots,\mathcal{A}^W_g$ and $\mathcal{B}^W_1,\ldots,\mathcal{B}^W_g$. The corresponding multi-valued discrete real harmonic function $u$ with these periods can be expressed as $du= \re(\omega)$, where we make a choice of two integration constants on $\Gamma$ and $\Gamma^*$. We consider the vector $P'=(\mathcal{A}^W_1,\ldots,\mathcal{A}^W_g,\mathcal{A}^B_1,\ldots,\mathcal{A}^B_g,\mathcal{B}^B_1,\ldots,\mathcal{B}^B_g,\mathcal{B}^W_1,\ldots,\mathcal{B}^W_g)^T \in \mathbb{R}^{4g}$. Then, the discrete Dirichlet energy $\langle \omega, \omega \rangle=2 \langle u, u \rangle=2\|u\|^2$ can be represented as a quadratic form in terms of the complete discrete period matrix $\tilde{\Pi}$: \[\langle \omega, \omega \rangle= P'^T E_{\Lambda} P':= P'^T \left( \begin{matrix} \re \tilde{\Pi} \left( \im \tilde{\Pi} \right)^{-1} \re \tilde{\Pi} + \im \tilde{\Pi}  &  -\re \tilde{\Pi} \left( \im \tilde{\Pi} \right)^{-1}\\ -\left( \im \tilde{\Pi} \right)^{-1} \re \tilde{\Pi} & \left( \im \tilde{\Pi} \right)^{-1} \end{matrix}\right)  P'.\]
\end{lemma}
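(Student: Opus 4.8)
The plan is to mirror the proof of the continuous Lemma~\ref{lem:energy_continuous}, with the complete discrete period matrix $\tilde{\Pi}$ taking the role of $\Pi_\Sigma$, the $2g$-dimensional vector of white-and-black $a$-period coefficients taking the role of the $g$-dimensional $a$-period vector, and the missing factor $2$ (present in $E_\Sigma$ but not in $E_\Lambda$) being exactly the difference between the continuous and discrete normalization conventions, i.e. the factor-$2$ definition of the discrete periods together with the factor-$2$ medial-graph area normalization.

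First I would expand $\omega$ in the canonical basis of discrete holomorphic differentials as $\omega=\sum_{j=1}^g c_j^W\omega_j^W+\sum_{j=1}^g c_j^B\omega_j^B$ and collect the coefficients into $c=(c_1^W,\dots,c_g^W,c_1^B,\dots,c_g^B)^T\in\mC^{2g}$. By the defining normalization of $\omega_j^W$ and $\omega_j^B$, the white and black $a$-periods of $\omega$ are simply $A_k^W=c_k^W$ and $A_k^B=c_k^B$; by the definition of the blocks $\Pi^{B,W},\Pi^{B,B},\Pi^{W,W},\Pi^{W,B}$, the black and white $b$-periods assemble into $(B^B,B^W)^T=\tilde{\Pi}c$. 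Writing $\mathcal{A}:=(\mathcal{A}^W,\mathcal{A}^B)^T$ and $\mathcal{B}:=(\mathcal{B}^B,\mathcal{B}^W)^T$ and matching the ordering of $P'$, taking real parts yields $\mathcal{A}=\re c$ and $\mathcal{B}=\re(\tilde{\Pi}c)$. By Theorem~\ref{th:holomorphic_existence}, such an $\omega$, and hence $c$, is uniquely determined by the prescribed data $P'$.

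Second, and this is the analytic heart, I would express the energy as a Hermitian form in $c$. Combining discrete holomorphicity (under which $\star\omega$ and $\omega$ are proportional, as used in the proof of Lemma~\ref{lem:period_orthodiagonal}) with the discrete Riemann bilinear identity from \cite{BoG17} reduces $\langle\omega,\omega\rangle$ to a bilinear pairing of the black and white $a$- and $b$-periods of $\omega$. Substituting $(A^W,A^B)=c$ and $(B^B,B^W)=\tilde{\Pi}c$ and invoking the symmetry of $\tilde{\Pi}$ (Theorem~\ref{th:period_matrix}) collapses the cross terms via $\tilde{\Pi}-\overline{\tilde{\Pi}}=2i\,\im\tilde{\Pi}$, producing $\langle\omega,\omega\rangle=c^T(\im\tilde{\Pi})\bar c$. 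Since $\im\tilde{\Pi}$ is real, symmetric, and positive definite (Theorem~\ref{th:period_matrix}), writing $c=\mathcal{A}+iq$ with $q=\im c$ gives $c^T(\im\tilde{\Pi})\bar c=\mathcal{A}^T(\im\tilde{\Pi})\mathcal{A}+q^T(\im\tilde{\Pi})q$, the mixed terms cancelling by symmetry. I expect this step to be the main obstacle: pinning down the precise black/white form of the discrete bilinear identity and tracking every sign and factor-of-$2$ convention so that the energy emerges as $c^T(\im\tilde{\Pi})\bar c$ with the correct sign guaranteeing positivity, rather than with a stray constant.

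Finally I would eliminate $q$. From $\mathcal{B}=\re(\tilde{\Pi}c)=(\re\tilde{\Pi})\mathcal{A}-(\im\tilde{\Pi})q$ and the invertibility of $\im\tilde{\Pi}$, one obtains $q=(\im\tilde{\Pi})^{-1}\bigl((\re\tilde{\Pi})\mathcal{A}-\mathcal{B}\bigr)$; substituting into $\mathcal{A}^T(\im\tilde{\Pi})\mathcal{A}+q^T(\im\tilde{\Pi})q$ and expanding, using the symmetry of $\re\tilde{\Pi}$ and $(\im\tilde{\Pi})^{-1}$, reproduces exactly the four blocks of $E_\Lambda$ in the variables $(\mathcal{A},\mathcal{B})=P'$. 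The identity $\langle\omega,\omega\rangle=2\|u\|^2$ used in the statement follows separately and cheaply: for discrete holomorphic $\omega$ one has $\im\omega=\pm\star\re\omega=\pm\star du$, and since the discrete Hodge star is an isometry for the scalar product, the real and imaginary parts contribute equally, so $\langle\omega,\omega\rangle=2\langle du,du\rangle$.
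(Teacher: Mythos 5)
Your proposal is correct and follows essentially the same route as the paper's proof: expansion of $\omega$ in the canonical basis so that $B=\tilde{\Pi}A$, the discrete Riemann bilinear identity combined with $\star\omega=-i\omega$ to express $\langle\omega,\omega\rangle$ through the black and white periods, and elimination of the imaginary parts of the periods via the invertibility of $\im\tilde{\Pi}$. The only differences are cosmetic: you package the bilinear identity as the Hermitian form $A^T(\im\tilde{\Pi})\bar{A}$ before eliminating $q=\im A$, whereas the paper substitutes the solved expressions for $\im A$ and $\im B$ directly into $\re A^T\im B-\re B^T\im A$, and you derive $\langle\omega,\omega\rangle=2\|u\|^2$ from the isometry of the Hodge star rather than from expanding $2du=\omega+\bar{\omega}$; both variants are algebraically equivalent to the paper's steps.
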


\begin{proof}
Clearly, $2du=2\re \omega=\omega + \bar{\omega}$. Hence, $4 \langle du, du \rangle=\langle \omega, \omega \rangle + \langle \omega, \bar{\omega} \rangle + \langle \omega, \bar{\omega} \rangle + \langle \bar{\omega}, \bar{\omega} \rangle=2 \langle \omega, \omega \rangle$, which follows from the fact that $\langle \bar{\omega}, \bar{\omega} \rangle = \overline{\langle \omega, \omega \rangle}=\langle \omega, \omega \rangle$ and $\omega \wedge \star \omega = -i \omega \wedge \omega =0$.

Let us consider the black and white $a$- and $b$-periods of $\omega$, denoted by $A_k^B,B_k^B$ and $A_k^W,B_k^W$ respectively, where $1 \leq k \leq g$. Since $\omega$ is a discrete holomorphic differential, we have $\star \omega=-i \omega$. Applying the discrete Riemann bilinear identity to the closed differentials $\omega$ and $\bar{\omega}$, we obtain:
\begin{align*}
\langle \omega,\omega \rangle&= \iint\limits_{F(X)} \omega \wedge \star\bar{\omega}= i\iint\limits_{F(X)} \omega \wedge \bar{\omega}\\
&= \frac{i}{2}\sum_{k=1}^g \left(A_k^B \bar{B}_k^W-B_k^B {\bar{A}}_k^W\right)+\frac{i}{2}\sum_{k=1}^g \left(A_k^W {\bar{B}}_k^B-B_k^W {\bar{A}}_k^B\right)\\
&=\sum_{k=1}^g \left(\re A_k^B \im B_k^W -\im A_k^B \re B_k^W + \re A_k^W \im B_k^B -\im A_k^W \re B_k^B \right).
\end{align*}

The complete discrete period matrix relates the $b$-periods of $\omega$ to its $a$-periods. By rewriting the corresponding system of equations, we can express the imaginary parts $\operatorname{Im} A_k^B, \operatorname{Im} A_k^W, \operatorname{Im} B_k^W, \operatorname{Im} B_k^B$ in terms of their corresponding real parts $\re A_k^B=\mathcal{A}_k^B, \re A_k^W=\mathcal{A}_k^W, \re B_k^W=\mathcal{B}_k^W, \re B_k^B=\mathcal{B}_k^B$.

Consider the canonical basis of discrete holomorphic differentials ${\omega_1^W,\ldots,\omega_g^W,\omega_1^B,\ldots,\omega_g^B}$. We can express $\omega$ in this basis as a linear combination: $\omega = \sum_{k=1}^g \left( A_k^W \omega_k^W+ A_k^B \omega_k^B\right)$. Now, let us introduce the $g$-dimensional vectors $A_W:= (A_1^W,\ldots,A^W_g)^T$, $A_B:=(A_1^B,\ldots,A^W_g)^T$, $B_W:= (B_1^W,\ldots,B^W_g)^T$, $B_B:=(B_1^B,\ldots,B^W_g)^T$, and the $2g$-dimensional vectors $A:=(A_W^T,A_B^T)^T$ and $B:=(B_B^T,B_W^T)^T$. Note that $P'^T=\re (A^T,B^T)$. Thus, we can express $B$ as follows:
\[ B = \left( \begin{matrix} B_B \\ B_W \end{matrix} \right) = \left( \begin{matrix} \Pi^{B,W} A_W + \Pi^{B,B} A_B\\ \Pi^{W,B} A_B + \Pi^{W,W} A_W \end{matrix} \right) = \left( \begin{matrix} \Pi^{B,W} & \Pi^{B,B}\\ \Pi^{W,W} & \Pi^{W,B}\end{matrix}\right) \left( \begin{matrix} A_W \\ A_B \end{matrix} \right)= \tilde{\Pi} A.\]

Now, let us proceed with the calculations. We have:
\begin{align*}
\re B= \re \tilde{\Pi} \re A - \im \tilde{\Pi} \im A &\Rightarrow \im A = \left( \im \tilde{\Pi} \right)^{-1} \re \tilde{\Pi} \re A - \left( \im \tilde{\Pi} \right)^{-1} \re B,\\
\im B= \re \tilde{\Pi} \im A + \im \tilde{\Pi} \re A &\Rightarrow \im B = \left(\re \tilde{\Pi} \left( \im \tilde{\Pi} \right)^{-1} \re \tilde{\Pi} + \im \tilde{\Pi}\right) \re A - \re \tilde{\Pi} \left( \im \tilde{\Pi} \right)^{-1} \re B.
\end{align*}

By substituting these expressions into the equation obtained from the discrete Riemann bilinear identity, we obtain:
\begin{align*}
\langle \omega,\omega \rangle&=\sum_{k=1}^g \left(\re A_k^B \im B_k^W -\im A_k^B \re B_k^W + \re A_k^W \im B_k^B -\im A_k^W \re B_k^B \right)=\re A^T \im B - \re B^T \im A \\
&=\re A^T \left(\re \tilde{\Pi} \left( \im \tilde{\Pi} \right)^{-1} \re \tilde{\Pi} + \im \tilde{\Pi}\right) \re A - \re A^T \re \tilde{\Pi} \left( \im \tilde{\Pi} \right)^{-1} \re B\\
&- \re B^T \left( \im \tilde{\Pi} \right)^{-1} \re \tilde{\Pi} \re A + \re B^T \left( \im \tilde{\Pi} \right)^{-1} \re B\\
&=\left( \re A^T, \re B^T \right) \left( \begin{matrix} \re \tilde{\Pi} \left( \im \tilde{\Pi} \right)^{-1} \re \tilde{\Pi} + \im \tilde{\Pi}  & -\re \tilde{\Pi} \left( \im \tilde{\Pi} \right)^{-1}\\ -\left( \im \tilde{\Pi} \right)^{-1} \re \tilde{\Pi} & \left( \im \tilde{\Pi} \right)^{-1} \end{matrix}\right)  \left( \begin{matrix} \re A \\ \re B \end{matrix}\right) = P'^T E_{\Lambda} P'. \qedhere
\end{align*}
\end{proof}

Note that $E_{\Sigma}$ includes a factor of 2 that is not present in $E_{\Lambda}$ due to the discrete Riemann bilinear identity having a factor of $1/2$, which is absent in the smooth case.

%%%%%%%%%%%%%%%%%%%%%%%%%%%%%%%%%%%%%%%%%%%%%%%%%%%%%%%%%%%%%%

\section{Convergence of the discrete Dirichlet energy}\label{sec:convergence1}

This chapter focuses on proving the convergence of the discrete Dirichlet energy of a discrete harmonic differential, which has prescribed $a$- and $b$-periods (with black and white periods being equal), to the continuous Dirichlet energy of the corresponding smooth harmonic differential with the same prescribed periods. The theorem stating this convergence is presented in Section~\ref{sec:convergence_theorems}, along with a discussion of its implications for the related quadratic form. While these theorems hold true for general discrete Riemann surfaces, the convergence of the corresponding multi-valued discrete harmonic functions to their continuous counterparts is only established in the case of orthodiagonal discrete Riemann surfaces. We will outline how the proof given by Bobenko and Skopenkov in \cite{BoSk12} for Delaunay-Voronoi quadrangulations can be adapted to our more general setting.

Before diving into the proof, we provide some geometric preliminaries in Section~\ref{sec:preliminary}. Specifically, we adapt the concept of $h$-adaptedness introduced in \cite{BoBu17} to our quad-graph framework. This adaptation ensures a linear rate of convergence. In contrast, in the general case, the convergence rate depends on the orders of conical singularities. Section~\ref{sec:notation} covers the basic notation and outlines our proof strategy. We draw inspiration from nonconforming finite elements \cite{Br07} and base our proof on the Second Lemma of Strang. We discuss the approximation and consistency errors separately in Sections~\ref{sec:approximation} and~\ref{sec:consistency}.

%%%%%%%%%%%%%%%%%%%%%%%%%%%%

\subsection{Geometric preliminaries}\label{sec:preliminary}

To establish the convergence of the discrete Dirichlet energy and the discrete period matrix, it is essential to have bounded interior angles and intersection angles of diagonals in quadrilaterals. We introduce the concept of $\phi$\textit{-regularity} to ensure this boundedness.

\begin{definition}
We define a discrete Riemann surface $(\Sigma,\Lambda)$ to be $\phi$-regular if the interior angles of all quadrilaterals $Q \in F(\Lambda)$ are bounded from below by $\phi$, and the intersection angle between the diagonals of each quadrilateral lies in the interval $[\phi,\pi-\phi]$. In other words, we require that $|\arg(\rho_Q)|\leq \frac{\pi}{2}-\phi$, where $\arg(\rho_Q)\in(-\pi,\pi]$ denotes the argument of the complex number $\rho_Q$.
\end{definition}

Throughout our analysis, we will assume that $(\Sigma,\Lambda)$ is $\phi$-regular.

In the upcoming sections, we will observe that the presence of conical singularities with a total angle of $4\pi$ or more leads to a slower rate of convergence. In order to achieve the same linear convergence rate $O(h)$ as when such singularities do not exist, we need to consider \textit{adapted} cell decompositions $\Lambda$. This notion is inspired by the work of Bobenko and B\"ucking in \cite{BoBu17} for triangulations.

\begin{definition}
We say that a discrete Riemann surface $(\Sigma,\Lambda)$ is $h$-adapted if its maximum edge length is $h$. Moreover, for any conical singularity $O \in S$ with $\gamma_O \leq 1/2$, we require that $|g_O(x)-g_O(y)|\leq h$ for any edge $xy \in E(\Lambda) \cap D_O$ within the chart $(D_O,g_O)$ defined in Section~\ref{sec:Riemann}.
\end{definition}

Let $xy \in E(\Lambda) \cap D_O$ be an edge, where $x$ and $y$ are points on the discrete Riemann surface $(\Sigma, \Lambda)$. Assuming that $x$ is closer to the conical singularity $O$ than $y$, i.e., $|Ox|\leq |Oy|$, we address a correction to the claim made in \cite{BoBu17} regarding $h$-adaptedness and the inequality $|xy|\leq h|Ox|^{1-\gamma_O}$. The presence of the mapping function $g_O$, which scales the angle $\angle xOy$ by the factor $\gamma_O\leq 1/2$, complicates the relationship between $|xy|$ and $|g_O(x)-g_O(y)|$.

If $\angle xOy$ is small and $|Oy|=|Ox|<1$ approaches 1, we observe that $|g_O(x)-g_O(y)|\approx \gamma_O |xy|$ due to the approximation $\sin(\alpha)\approx \alpha$ for small $\alpha>0$. As a result, if we impose the condition $|g_O(x)-g_O(y)|=h$, it would imply $|xy| \approx h\gamma_O^{-1}> h|Ox|^{1-\gamma_O}$. Therefore, we present a corrected version of the claim.

\begin{lemma}\label{lem:adapted}
Let $\gamma_O \leq 1/2$ and consider an edge $xy \in E(\Lambda) \cap D_O$ on the discrete Riemann surface $(\Sigma, \Lambda)$, such that $|Ox|\leq |Oy|$. Then, we have the inequality: \[|xy|\leq \left(1+\frac{\pi}{2\gamma_O}\right)h|Ox|^{1-\gamma_O}.\]
\end{lemma}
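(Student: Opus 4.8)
The plan is to work entirely inside the cone $D_O$ in the polar coordinates $(r,\psi)$ of Section~\ref{sec:Riemann}, where $O$ is the apex, the edge $xy$ is a geodesic segment, and the distances are elementary. Writing $r_x=|Ox|\le r_y=|Oy|$, $\Delta\psi=\angle xOy$, and $\rho_x=r_x^{\gamma_O}$, $\rho_y=r_y^{\gamma_O}$, the chart values are $g_O(x)=\rho_x$ and $g_O(y)=\rho_y e^{i\gamma_O\Delta\psi}$ after a rotation, so that $|g_O(x)-g_O(y)|^2=(\rho_y-\rho_x)^2+4\rho_x\rho_y\sin^2(\gamma_O\Delta\psi/2)$. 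Unrolling the cone into the plane, which is legitimate after checking $\Delta\psi\le\pi$ for a short embedded edge, yields the companion identity $|xy|^2=(r_y-r_x)^2+4r_xr_y\sin^2(\Delta\psi/2)$. A useful preliminary observation is that both sides of the asserted inequality scale like $\lambda$ under $r\mapsto\lambda r$, which sends $h\mapsto\lambda^{\gamma_O}h$; hence I may normalise $r_x=\rho_x=1$ and reduce the whole statement to a one–parameter estimate governed by the effective mesh size $h/\rho_x$. I would then control the geodesic by inserting the auxiliary point $x'=(r_x,\psi_y)$, giving $|xy|\le|xx'|+|x'y|\le r_x\Delta\psi+(r_y-r_x)$, which separates an \emph{angular} and a \emph{radial} contribution.

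The angular contribution is the clean step. Discarding the radial square in the chart identity and using $\sqrt{\rho_x\rho_y}\ge\rho_x$ together with Jordan's inequality $\sin t\ge\frac{2}{\pi}t$ on $[0,\pi/2]$ (applicable since $\gamma_O\Delta\psi/2\le\pi/2$), I obtain
\[
h\ \ge\ |g_O(x)-g_O(y)|\ \ge\ 2\rho_x\sin\!\Big(\tfrac{\gamma_O\Delta\psi}{2}\Big)\ \ge\ \frac{2\gamma_O}{\pi}\,\rho_x\,\Delta\psi .
\]
Solving for $\Delta\psi$ and multiplying by $r_x=\rho_x^{1/\gamma_O}$ produces $r_x\Delta\psi\le\frac{\pi}{2\gamma_O}\,\rho_x^{1/\gamma_O-1}h=\frac{\pi}{2\gamma_O}\,h\,r_x^{1-\gamma_O}$, which is exactly the second summand of the claimed constant and uses only trigonometry.

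The radial contribution is where the main difficulty lies. From $\rho_y-\rho_x\le|g_O(x)-g_O(y)|\le h$ and the mean value theorem applied to the convex function $\rho\mapsto\rho^{1/\gamma_O}$ one gets $r_y-r_x\le\frac{1}{\gamma_O}\,\xi^{1/\gamma_O-1}(\rho_y-\rho_x)$ for some $\xi\in[\rho_x,\rho_y]$, and the obstacle is that the monotone factor $\xi^{1/\gamma_O-1}$ naturally wants to be evaluated at the \emph{larger} radius $\rho_y$, whereas the statement is phrased through $r_x^{1-\gamma_O}=\rho_x^{1/\gamma_O-1}$. Converting one into the other forces one to bound the ratio $\rho_y/\rho_x$, and this is precisely the point at which $h$-adaptedness must be invoked: it prevents the closer endpoint from collapsing onto the singularity, supplying a lower bound $\rho_x=|g_O(x)|\gtrsim h$, under which $\rho_y\le\rho_x+h$ keeps $\rho_y/\rho_x$ bounded. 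I expect the sharp bookkeeping to be non-additive rather than a crude sum of the two legs, since the extremal configuration is the \emph{purely radial} edge (where $\Delta\psi=0$ and the angular term vanishes): there one checks that $r_y-r_x\le (1+\tfrac{\pi}{2\gamma_O})\,h\,r_x^{1-\gamma_O}$ holds exactly up to the adaptedness threshold, so the whole constant is attained in the radial direction while the angular direction stays strictly below it. Carrying out this optimisation over the radial–angular distribution, under the lower bound on $\rho_x$, is the technical heart of the lemma and is exactly where the naive assertion $|xy|\le h|Ox|^{1-\gamma_O}$ of \cite{BoBu17} breaks down and must be replaced by the factor $1+\frac{\pi}{2\gamma_O}$. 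Finally, in the complementary region where $x$ is far from $O$ and $r_x^{1-\gamma_O}$ is bounded below, the trivial estimate $|xy|\le h$ coming from the maximal edge length already gives the inequality, so it remains only to combine these regimes.
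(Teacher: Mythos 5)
Your decomposition of $|xy|$ into a radial and an angular leg via the auxiliary point $(r_x,\psi_y)$, and your treatment of the angular leg through the chord bound $h\ge 2\rho_x\sin(\gamma_O\Delta\psi/2)$ followed by Jordan's inequality, coincide with the paper's proof; that half is complete and correct. The gap is in the radial leg, which you explicitly leave open, and it cannot be closed along the route you describe. First, the lower bound $\rho_x=|g_O(x)|\gtrsim h$ that you want to extract from $h$-adaptedness is not there: the definition only imposes the upper bound $|g_O(x)-g_O(y)|\le h$ on edges in $D_O$ and says nothing about how close a vertex may lie to $O$; indeed $O\in S\subset V(\Lambda)$ is itself a vertex, so edges with $|Ox|$ arbitrarily small (even zero) are admissible, and for those the right-hand side $(1+\frac{\pi}{2\gamma_O})h|Ox|^{1-\gamma_O}$ degenerates while $|xy|$ does not. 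Second, even granting $|Ox|^{\gamma_O}\ge h$ (which does hold at the place the lemma is used in Lemma~\ref{lem:gradient_outside}, where $Q$ avoids the disk of radius $h^{1/\gamma_O}$), your mean-value argument gives $r_y-r_x\le\gamma_O^{-1}(\rho_y/\rho_x)^{1/\gamma_O-1}h\,r_x^{1-\gamma_O}$ with $(\rho_y/\rho_x)^{1/\gamma_O-1}\le 2^{1/\gamma_O-1}$, and the purely radial configuration $\rho_y=2\rho_x=2h$ realizes the ratio $(r_y-r_x)/(h\,r_x^{1-\gamma_O})=2^{1/\gamma_O}-1$, which already exceeds the entire budget $1+\frac{\pi}{2\gamma_O}$ for $\gamma_O=1/3$. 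So the ``optimisation over the radial--angular distribution'' you defer to cannot terminate at the stated constant.

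For comparison, the paper disposes of the radial leg in one line, via $h\ge|Oy|^{\gamma_O}-|Ox|^{\gamma_O}\ge|Oy|\,|Ox|^{\gamma_O-1}-|Ox|^{\gamma_O}$, whence $|Oy|-|Ox|\le h|Ox|^{1-\gamma_O}$ with radial constant $1$. Your suspicion that this is the delicate step is well founded: the second inequality is equivalent to $s^{\gamma_O}\ge s$ for $s=|Oy|/|Ox|\ge 1$, which reverses for $\gamma_O<1$. You have therefore correctly located the weak point of the lemma, but neither your sketch nor the paper's chain of inequalities closes it; a correct version needs either an explicit lower bound on $|Ox|^{\gamma_O}$ in terms of $h$ together with a larger, $\gamma_O$-dependent radial constant of order $2^{1/\gamma_O}$, or a reformulation of the right-hand side in terms of $|Oy|^{1-\gamma_O}$. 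One further inaccuracy: your claim that the constant is attained in the radial direction while the angular contribution ``stays strictly below'' contradicts the paper's own motivating discussion preceding the lemma, which derives the factor $\pi/(2\gamma_O)$ precisely from the angular degeneration $|g_O(x)-g_O(y)|\approx\gamma_O|xy|$ at nearly equal radii.
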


\begin{proof}
In polar coordinates $(r,\psi)$, we have $g_O(r,\psi)=r^{\gamma_O} \exp(i\gamma_O\psi)$. We will estimate the radial and angular components of $|xy|$ separately and use the triangle inequality to derive a bound for $|xy|$.

For the radial component, we have:
\begin{align*}
h &\geq |g_O(x)-g_O(y)|\geq |Oy|^{\gamma_O}-|Ox|^{\gamma_O} \geq |Oy|\cdot |Ox|^{\gamma_O-1}-|Ox|^{\gamma_O}\\
\Rightarrow h |Ox|^{1-\gamma_O} &\geq |Oy|-|Ox|
\end{align*}

Next, we consider the angle component. Since $\gamma_O \leq 1/2$, we have $\gamma_O\alpha:=\gamma_O|\angle xOy|\leq \pi/2$. Thus, $\sin(\gamma_O\alpha/2)\geq \gamma_O\alpha/\pi$. Since $|Ox|\leq |Oy|$, the edge $g_O(x)g_O(y)$ is not smaller than the chord corresponding to the angle $\gamma_O\alpha$ and the radius $|Ox|^{\gamma_O}$. Therefore, we have:
\begin{align*}
h &\geq |g_O(x)-g_O(y)|\geq 2 |Ox|^{\gamma_O} \sin\left(\frac{\gamma_O\alpha}{2}\right) \geq 2 |Ox|^{\gamma_O} \frac{\gamma_O\alpha}{\pi}\\
\Rightarrow 2 |Ox| \sin\left(\frac{\alpha}{2}\right) &\leq |Ox| \alpha \leq |Ox|^{1-\gamma_O}h\frac{\pi}{2\gamma_O}.
\end{align*}

Combining the bounds for the radial and angular components, we apply the triangle inequality to obtain: \[|xy|\leq (|Oy|-|Ox|) + 2 |Ox| \sin\left(\frac{\alpha}{2}\right)\leq \left(1+\frac{\pi}{2\gamma_O}\right)h|Ox|^{1-\gamma_O}. \qedhere\]
\end{proof}

In our proof, we will primarily focus on the case of general discrete Riemann surfaces, and provide brief comments on the relatively minor modifications required for the case of $h$-adaptedness.

%%%%%%%%%%%%%%%%%%%%%%%%%%%%

\subsection{Notation and strategy of proof}\label{sec:notation}

We introduce some notation for the rest of this chapter. Let $h$ denote the maximum edge length of a discrete Riemann surface $(\Sigma,\Lambda)$. We will always assume that $(\Sigma,\Lambda)$ is $\phi$-regular.

Consider a fixed vector of periods $P \in \mathbb{R}^{2g}$. We define $U$ to be a multi-valued smooth harmonic function with its $a$- and $b$-periods given by $P$. Similarly, $u$ is a multi-valued discrete harmonic function with its black and white $a$- and $b$-periods equal to $P$, where the corresponding black and white periods coincide. $U_\Lambda$ represents the restriction of $U$ to $V(\tilde{\Lambda})$, and it is a multi-valued discrete function with equal black and white periods that match those of $u$.

Let $M_P$ be the set of all multi-valued functions $f:V(\tilde{\Lambda})\to\mathbb{R}$ with the same periods as $u$, and let $M_0$ be the set of all functions $f:V(\Lambda)\to\mathbb{R}$.

In their work \cite{BoSk12}, Bobenko and Skopenkov considered triangulations $\Gamma$ of $\Sigma$ and the dual lattice $\Gamma^*$ consisting of the circumcenters of triangles. The quadrilateral cell decomposition $\Lambda$ was defined by the black vertices $V(\Gamma)$ and the white vertices $V(\Gamma^*)$, where the vertices of a triangle were connected to the dual vertex of that triangle. They observed that, just as in our case, the discrete Dirichlet energy of a discrete function coincided with the Dirichlet energy of its linear interpolation on $\Gamma$. However, a crucial difference is that their interpolation was a continuous function, which allowed for a direct comparison with the smooth Dirichlet energy. In our setting, where the interpolations are piecewise linear and non-continuous, we need to employ different techniques. We draw inspiration from nonconforming finite element methods, specifically Braess' analysis on Crouzeix-Raviart elements or nonconforming $P_1$ elements \cite{Br07}. These finite elements correspond to triangulations and provide a discretization of the Poisson equation in a planar domain with boundary, estimating the difference between the discrete and actual solutions in terms of the Dirichlet energy norm. We adapt these ideas to our context, incorporating a careful analysis near the singularities of $\Sigma$, as done by Bobenko and Skopenkov.

We now introduce our version of the Second Lemma of Strang, also known as the Lemma of Berger, Scott, and Strang (Lemma 1.2 in Chapter III of \cite{Br07}). The first term in the inequality corresponds to the \textit{approximation error}, while the second term corresponds to the \textit{consistency error}. Since $U_\Lambda \in M_P$, we can choose $v=U_\Lambda$ to bound the approximation error.

\begin{lemma}\label{lem:strang}
\[\left\|U-u\right\|\leq 2 \inf\limits_{v \in M_P} \left\|U-v\right\|+\sup\limits_{f \in M_0} \frac{\left|a(U,f)\right|}{\left\|f\right\|}\]
\end{lemma}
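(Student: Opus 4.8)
The plan is to exploit the fact that $u$ is the minimizer of the discrete Dirichlet energy over $M_P$ (equivalently, that $du = \re(\omega)$ for the discrete harmonic differential $\omega$), which by Corollary~\ref{cor:harmonic} means $a(u, f) = 0$ for every $f \in M_0$. This orthogonality is the discrete Galerkin condition, and it is the natural replacement for the usual assumption in the finite-element setting. The norm $\|\cdot\|$ and bilinear form $a(\cdot,\cdot)$ I would use throughout are the Dirichlet-energy ones defined in Section~\ref{sec:Dirichlet}, so that $\|f\|^2 = a(f,f)$ on the space of (multi-valued) functions on $V(\tilde\Lambda)$, and I would rely on $a$ being an inner product so that Cauchy-Schwarz applies.

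First I would pick an arbitrary $v \in M_P$ and write $U - u = (U - v) + (v - u)$, using the triangle inequality to get $\|U-u\| \le \|U-v\| + \|v-u\|$. The work is to bound the second term $\|v-u\|$. Since both $v$ and $u$ lie in $M_P$ and these share the same periods, their difference $v - u$ belongs to $M_0$ (the periods cancel, leaving a genuine single-valued function on $V(\Lambda)$). This is the key structural observation: $w := v - u \in M_0$, so I may test the Galerkin orthogonality against it. Then I would expand
\[
\|v-u\|^2 = a(v-u, v-u) = a(U-u, v-u) - a(U-v, v-u).
\]
The first term is $a(U-u, w) = a(U,w) - a(u,w)$, and by the orthogonality $a(u,w)=0$ this reduces to $a(U,w)$, which is exactly a consistency term of the form controlled by the supremum in the statement. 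The second term I would bound by Cauchy-Schwarz as $\|U-v\|\,\|v-u\|$.

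Carrying this out, I obtain
\[
\|v-u\|^2 \le |a(U,\,v-u)| + \|U-v\|\,\|v-u\| \le \left(\sup_{f\in M_0}\frac{|a(U,f)|}{\|f\|}\right)\|v-u\| + \|U-v\|\,\|v-u\|,
\]
where in the first term I used that $v-u \in M_0$ to pass to the supremum after dividing and multiplying by $\|v-u\|$. Dividing through by $\|v-u\|$ (the case $\|v-u\|=0$ being trivial) gives $\|v-u\| \le \|U-v\| + \sup_{f\in M_0}|a(U,f)|/\|f\|$. Combining with the triangle inequality above yields $\|U-u\| \le 2\|U-v\| + \sup_{f\in M_0}|a(U,f)|/\|f\|$, and taking the infimum over $v \in M_P$ completes the argument. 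I do not expect a serious obstacle here; the only point requiring care is verifying that $v-u \in M_0$ (so that both the orthogonality and the supremum may legitimately be applied to it), which follows directly from the fact that $v$ and $u$ carry identical periods, and confirming that the relevant Galerkin orthogonality $a(u,f)=0$ for $f\in M_0$ is indeed the content of Corollary~\ref{cor:harmonic} transported to functions via $\|f\|^2 = a(f,f)$.
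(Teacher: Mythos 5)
Your proposal is correct and follows essentially the same route as the paper: the orthogonality $a(u,f)=0$ for $f\in M_0$ from Corollary~\ref{cor:harmonic}, the observation that $v-u\in M_0$, the expansion of $\|v-u\|^2$ into a term bounded by Cauchy--Schwarz and a consistency term $a(U,v-u)$, and the final triangle inequality. The only difference is cosmetic (the paper writes the identity as $a(u-v,u-v)=a(U-v,u-v)+a(u,u-v)-a(U,u-v)$, which is your decomposition up to sign).
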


\begin{proof}
Corollary~\ref{cor:harmonic} implies that $a(u,u-v)=\langle du, df \rangle=0$ with $f:=u-v \in M_0$. Thus, we have
\begin{align*}
\left\|u-v\right\|^2&=a(u-v,u-v)=a(U-v,u-v)+a(u,u-v)-a(U,u-v)\\
&=a(U-v,u-v)-a(U,u-v)\\
\Rightarrow \left\|u-v\right\|&=\frac{a(U-v,u-v)}{\left\|u-v\right\|}-\frac{a(U,u-v)}{\left\|u-v\right\|}\leq \left\|U-v\right\|+\frac{\left|a(U,f)\right|}{\left\|f\right\|}
\end{align*}
by the Cauchy-Schwarz inequality. Applying the triangle inequality, we obtain for any $v \in M_P$:
\begin{align*}
\left\|U-u\right\|&\leq\left\|U-v\right\|+\left\|u-v\right\|\leq 2 \left\|U-v\right\|+\frac{\left|a(U,f)\right|}{\left\|f\right\|}.\qedhere
\end{align*}
\end{proof}

In the forthcoming approximations, we will encounter various constants that depend on parameters such as the smooth function $U$, the regularity parameter $\phi$, or the singularity $O \in S$. To simplify the notation, we denote these constants by $C^{(i)}_{a,b,c}$, where $i$ indexes the constants and $a,b,c$ represent the corresponding parameters. For example, $C^{(1)}_{U,O}$ depends solely on the smooth function $U$ and the singularity $O \in S$ and is the first constant introduced in the subsequent analysis. It is essential to note that all these constants depend on the polyhedral surface $\Sigma$ without any further specification.

To distinguish the constants in the $h$-adapted case, we denote them by $C^{(i,h)}_{a,b,c}$. Although we do not provide explicit expressions for these constants here, it is important to note that we have designed the proofs in a way that allows for easy computation of these constants.

In our subsequent local investigations, we will work with charts to facilitate our analysis. Recall that a discrete chart $z_Q$ of a quadrilateral face $Q \in F(\Lambda)$ is defined as an isometric embedding of $Q$ into the complex plane $\mathbb{C}$. To simplify the notation, we identify the vertices and edges of $Q$ with their corresponding complex values. In cases where $Q$ is not incident to a singularity $O \in S$, we extend the chart $z_Q$ to an isometric mapping $g$ of a neighborhood around $Q$ to $\mathbb{C}$. For the sake of simplicity, we denote the composition $U \circ g^{-1}$ as $U$ in our discussions. In this context, gradients such as $\nabla U$ and $\nabla_Q u$ may depend on the choice of chart, but their lengths $|\nabla U|$, $|\nabla_Q u|$, and $|\nabla U - \nabla_Q u|$ are independent of the specific chart chosen.

Finally, for simplicity, we introduce the notation $|x|_0 := \max\{0, x\}$.

%%%%%%%%%%%%%%%%%%%%%%%%%%%%

\subsection{The approximation error}\label{sec:approximation}

To bound the approximation error, we follow the approach outlined by Bobenko and Skopenkov in \cite{BoSk12}, with the adaptations for $h$-adapted discrete Riemann surfaces provided by Bobenko and B\"ucking in \cite{BoBu17}.

Let us first recall the projection lemma, which is a minor variant of Lemma 4.2 in \cite{Sk13, BoSk12}:

\begin{lemma}\label{lem:projection}
Let $x_1, x_2 \in \mathbb{C} \cong \mathds{R}^2$ be two linearly independent vectors of length one, and let the angle $\alpha := \min \arccos\left(\pm x_1 \cdot x_2\right)$ be the smaller angle between $x_1$ and $\pm x_2$. Then, for any $y \in \mathds{C}$, we have \[|y| \leq \frac{2}{\sin(\alpha)} \max\limits_{i\in\{1,2\}} |x_i \cdot y|.\]
\end{lemma}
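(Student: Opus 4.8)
The plan is to prove the projection inequality by a direct linear-algebra argument: since $x_1$ and $x_2$ are linearly independent unit vectors in $\mathds{R}^2$, they form a basis, so I can expand an arbitrary $y$ in terms of them, bound the coefficients, and then bound $|y|$ by the triangle inequality. The quantity $\alpha$ governs how far the basis is from being degenerate, and the factor $2/\sin(\alpha)$ should emerge as the norm of the change-of-basis (inverse Gram) operator.

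\textbf{Setup and key steps.} First I would write $y = c_1 x_1 + c_2 x_2$ for real scalars $c_1, c_2$, which is possible because $\{x_1,x_2\}$ is a basis of $\mathds{R}^2$. Taking dot products with $x_1$ and $x_2$ and using $|x_i|=1$ gives the linear system
\begin{align*}
x_1 \cdot y &= c_1 + c_2 (x_1 \cdot x_2),\\
x_2 \cdot y &= c_1 (x_1 \cdot x_2) + c_2.
\end{align*}
Writing $s := x_1 \cdot x_2$, the determinant of this system is $1 - s^2$. Solving by Cramer's rule yields $c_1 = \big((x_1\cdot y) - s\,(x_2\cdot y)\big)/(1-s^2)$ and similarly for $c_2$. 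Now I note that by the definition $\alpha = \min \arccos(\pm x_1 \cdot x_2)$ we have $|s| = |\cos\alpha|$ and $1 - s^2 = \sin^2(\alpha)$. Setting $m := \max_{i} |x_i \cdot y|$, each numerator is bounded by $m + |s|\,m = m(1+|\cos\alpha|)$, so $|c_i| \leq m(1+|\cos\alpha|)/\sin^2(\alpha)$. Finally, the triangle inequality gives $|y| \leq |c_1| + |c_2| \leq 2m(1+|\cos\alpha|)/\sin^2(\alpha)$, and since $(1+|\cos\alpha|)/\sin^2(\alpha) = (1+|\cos\alpha|)/\big((1-|\cos\alpha|)(1+|\cos\alpha|)\big) = 1/(1-|\cos\alpha|) \leq 1/\sin(\alpha)$ (using $\sin\alpha \leq 1$ so $1-\cos\alpha \geq 1 - \cos^2\alpha$ when... ), I recover the claimed bound $|y| \leq (2/\sin\alpha)\,m$.

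\textbf{Anticipated obstacle.} The only delicate point is the final scalar inequality relating $(1+|\cos\alpha|)/\sin^2\alpha$ to $1/\sin\alpha$; I should verify it cleanly rather than hand-wave. The cleanest route is to observe $(1+|\cos\alpha|)/\sin^2\alpha = 1/(1 - |\cos\alpha|)$, so the target reduces to $1/(1-|\cos\alpha|) \leq 1/\sin\alpha$, i.e. $\sin\alpha \leq 1 - |\cos\alpha|$. However this last inequality is \emph{false} in general (e.g. near $\alpha = \pi/2$ it reads $1 \leq 1$, but for $\alpha$ slightly less than $\pi/2$ the left side exceeds the right), so the coefficient-sum estimate as written is slightly too lossy to land exactly on $2/\sin\alpha$. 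I would therefore sharpen the estimate by not splitting into two separate coefficient bounds but instead bounding $|y|^2 = c_1^2 + c_2^2 + 2 c_1 c_2 s$ directly, or equivalently by using that the smallest singular value of the matrix $[x_1 \; x_2]$ equals $\sqrt{1-|s|} = \sqrt{1-|\cos\alpha|}$ and relating it to $\sin\alpha$ via a half-angle identity. Alternatively, and most transparently, I would choose an orthonormal-type decomposition: project $y$ onto the direction $x_1$ and onto the component of $x_2$ orthogonal to $x_1$ (which has length $\sin\alpha$), so that $|y|\le |x_1\cdot y| + |y - (x_1\cdot y)x_1|$ and the orthogonal part is controlled by $(x_2 \cdot y)/\sin\alpha$ up to the same factor; this geometric decomposition is where the $\sin\alpha$ in the denominator arises naturally and should produce the constant $2/\sin\alpha$ without slack. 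Getting the constant exactly right, rather than merely up to a universal factor, is the main thing to be careful about.
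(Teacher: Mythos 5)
Your approach is genuinely different from the paper's. The paper argues purely with angles: after normalizing so that $x_1 \cdot x_2 \geq 0$ (hence $\alpha \leq \pi/2$), it observes that the angles $\beta_i = \arccos(x_i \cdot y/|y|)$ cannot both lie in the open interval $\left(\frac{\pi-\alpha}{2},\frac{\pi+\alpha}{2}\right)$, so for at least one $i$ one has $|x_i \cdot y| = |y|\,|\cos\beta_i| \geq |y|\cos\left(\frac{\pi-\alpha}{2}\right) = |y|\sin(\alpha/2) \geq |y|\sin(\alpha)/2$ -- three lines, no inversion of any linear system. Your Gram-matrix route can be made to work and is a reasonable alternative, but as written the proposal does not contain a complete proof: the main line (Cramer's rule plus the triangle inequality $|y| \leq |c_1| + |c_2|$) overshoots the constant, as you yourself correctly diagnose, and you then offer three possible repairs without carrying any of them out.

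Of those repairs, the quadratic-form/singular-value one is the right choice and closes the gap cleanly: with $G = \left(\begin{smallmatrix} 1 & s \\ s & 1\end{smallmatrix}\right)$ and $d = (x_1\cdot y,\, x_2\cdot y)^T = Gc$, one has $|y|^2 = c^T G c = d^T G^{-1} d \leq \lambda_{\max}(G^{-1})\,|d|^2 \leq \frac{2m^2}{1-|s|} = \frac{m^2}{\sin^2(\alpha/2)}$, and then $\sin(\alpha/2) \geq \sin(\alpha)/2$ gives the claim; note this reproduces exactly the paper's sharper intermediate bound $|y| \leq m/\sin(\alpha/2)$. By contrast, your ``most transparent'' repair is still lossy as phrased: the triangle-inequality split $|y| \leq |x_1\cdot y| + |y - (x_1\cdot y)x_1|$ yields at best $|y| \leq m\bigl(\sin\alpha + 1 + |\cos\alpha|\bigr)/\sin\alpha$, and $\sin\alpha + 1 + |\cos\alpha| = 1+\sqrt{2} > 2$ at $\alpha = \pi/4$, so the constant $2$ is again missed. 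That decomposition only works if you use orthogonality, i.e.\ $|y|^2 = (x_1\cdot y)^2 + |y_\perp|^2 \leq m^2\bigl(\sin^2\alpha + (1+|\cos\alpha|)^2\bigr)/\sin^2\alpha = 2m^2(1+|\cos\alpha|)/\sin^2\alpha \leq 4m^2/\sin^2\alpha$. So: workable strategy, correct self-diagnosis, but you need to commit to the Pythagorean (or spectral) version of the estimate rather than any triangle-inequality version to actually land on $2/\sin\alpha$.
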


\begin{proof}
Without loss of generality, we can assume that $x_1 \cdot x_2 \geq 0$. Let $\beta_i := \arccos\left(x_i \cdot y/|y|\right)$. Then, $\alpha = \pm \beta_1 \pm \beta_2$ for an appropriate choice of signs.

Since $\alpha \leq \frac{\pi}{2}$, it follows that not both $\beta_1$ and $\beta_2$ can be in the interval $\left(\frac{\pi-\alpha}{2}, \frac{\pi+\alpha}{2}\right)$. Let us assume that $\beta_i$ is not in this interval. Then, we have:
\[|x_i \cdot y|=|y|\cdot|\cos(\beta_i)|\geq |y|\cos\left(\frac{\pi-\alpha}{2}\right)=|y|\sin\left(\frac{\alpha}{2}\right)\geq |y|\frac{\sin(\alpha)}{2}.\qedhere \]
\end{proof}

Next, we introduce the following definition:

\begin{definition}
Let $F: U \subset \mathds{C} \to \mathds{R}$. We define $\left|D^k F(x+iy)\right| := \max_{0 \leq j \leq k} \left|\frac{\partial^k F}{\partial^j x \partial^{k-j} y}(x+iy)\right|$.
\end{definition}

Now, we can state the gradient approximation lemma, which is an analogue of Lemma 4.3 in \cite{BoSk12} or Lemma 4.5 in \cite{Sk13}. The main difference is that we consider quadrilaterals that may be non-convex, and we do not consider their convex hull since it may contain a conical singularity.

\begin{lemma}\label{lem:gradient}
Let $Q\in F(\Lambda)$ be not incident to a conical singularity of $\Sigma$. Then, we have: \[\max_{z \in Q} \left|\nabla U(z)-\nabla_Q U_\Lambda\right|\leq \frac{8\sqrt{2}}{\sin(\phi)} h\max\limits_{z \in Q}\left\|D^2 U(z)\right\|.\]
\end{lemma}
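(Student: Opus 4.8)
The plan is to reduce the two-dimensional vector estimate to two one-dimensional comparisons along the diagonals of $Q$, glued together by the projection lemma. Fix $z \in Q$ and set $y := \nabla U(z) - \nabla_Q U_\Lambda$. Let $x_1 := (b_+-b_-)/|b_+-b_-|$ and $x_2 := (w_+-w_-)/|w_+-w_-|$ be the unit vectors along the black and white diagonals of $Q$ in the chart $z_Q$. Since the intersection angle of the diagonals lies in $[\phi,\pi-\phi]$ by $\phi$-regularity, the smaller angle $\alpha$ between $x_1$ and $\pm x_2$ satisfies $\sin\alpha \geq \sin\phi$, so Lemma~\ref{lem:projection} gives $|y| \leq \tfrac{2}{\sin\phi}\max_{i\in\{1,2\}}|x_i\cdot y|$. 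It therefore suffices to bound each projection $|x_i \cdot (\nabla U(z) - \nabla_Q U_\Lambda)|$ by an $O(h)$ multiple of $\max_{z\in Q}\|D^2 U(z)\|$.

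The key observation is that the projection of the discrete gradient onto a diagonal direction is exactly a difference quotient of $U$. From the defining relation $\nabla_Q f\cdot z(e)=\int_e df$ of the discrete gradient (as used in the proof of Lemma~\ref{lem:gradient_differential}, where the associated affine function reproduces diagonal differences of $f$), one gets $\nabla_Q U_\Lambda \cdot (b_+-b_-) = U_\Lambda(b_+)-U_\Lambda(b_-) = U(b_+)-U(b_-)$, using $U_\Lambda = U|_{V(\tilde\Lambda)}$, and analogously for the white diagonal. Hence $\nabla_Q U_\Lambda \cdot x_1 = (U(b_+)-U(b_-))/|b_+-b_-|$. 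Writing this difference quotient as the average $\int_0^1 \nabla U(\xi_t)\cdot x_1\,dt$ of directional derivatives along the parametrized diagonal $\xi_t := b_- + t(b_+-b_-)$, I obtain the clean identity $x_1\cdot(\nabla U(z)-\nabla_Q U_\Lambda) = \int_0^1 x_1\cdot(\nabla U(z)-\nabla U(\xi_t))\,dt$, which carries no division by the (possibly small) diagonal length.

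It then remains to estimate the integrand by the Hessian. For each $t$ I expand the directional derivative $x_1\cdot\nabla U$ to first order along the segment from $z$ to $\xi_t$, bounding $|x_1\cdot(\nabla U(z)-\nabla U(\xi_t))| \leq \|\operatorname{Hess}U\|_{\mathrm{op}}\,|z-\xi_t|$, where the operator norm is controlled by a fixed multiple of $\max_{z\in Q}\|D^2 U(z)\|$. Since $z\in Q$ and $\xi_t$ lies on the black diagonal, $|z-\xi_t|\le \operatorname{diam}(Q) \le 2h$ (each edge has length at most $h$). Feeding this and the analogous white estimate back through the projection inequality, and keeping careful track of the absolute geometric constants (the diameter bound, the bound on the Hessian operator norm in terms of $\|D^2U\|$, and the factor $2/\sin\phi$), produces the claimed constant $\tfrac{8\sqrt2}{\sin\phi}$.

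The main obstacle is the geometric bookkeeping forced by the possibility that $Q$ is non-convex. I must keep every comparison point and every integration path inside $Q$ together with a chart neighborhood on which $U$ is smooth and $\|D^2U\|$ is controlled, rather than passing to the convex hull of $Q$, whose interior might contain a conical singularity. This is precisely where the hypothesis that $Q$ is not incident to a singularity enters: the chart $z_Q$ extends isometrically to a neighborhood of $Q$, so $U\circ g^{-1}$ is smooth there, the diagonals along which the difference quotients are taken remain chords of $Q$, and the first-order Taylor estimates can be carried out entirely within the region over which $\max_{z\in Q}\|D^2U(z)\|$ is taken.
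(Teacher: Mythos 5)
Your argument is sound, and essentially reproduces the paper's, whenever both diagonals of $Q$ are contained in $Q$: the identity $\nabla_Q U_\Lambda\cdot(b_+-b_-)=U(b_+)-U(b_-)$ is correct, averaging the directional derivative over the diagonal is just the integral form of the Rolle argument the paper uses, and the projection lemma with the two diagonal directions closes the estimate with the stated constant. The genuine gap is the non-convex case. $\phi$-regularity only bounds the interior angles from below, so $Q$ may have a reflex vertex, in which case one of its diagonals --- say $b_-b_+$ --- lies outside $Q$. Your final paragraph asserts that ``the diagonals along which the difference quotients are taken remain chords of $Q$,'' but that is exactly what fails here. The hypothesis that $Q$ is not \emph{incident} to a conical singularity does not prevent a singularity (a vertex of some other quadrilateral) from sitting in the notch between the exterior diagonal and the two edges $b_-w_-$ and $w_-b_+$; the paper flags this explicitly just before the lemma (``we do not consider their convex hull since it may contain a conical singularity''). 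In that situation the chart does not extend isometrically over the notch, $U$ need not be smooth (or single-valued) along the straight segment $\xi_t=b_-+t(b_+-b_-)$, and $\max_{z\in Q}\left\|D^2U(z)\right\|$ gives no control there, so both your representation $\int_0^1\nabla U(\xi_t)\cdot x_1\,dt$ of the difference quotient and your Taylor estimate along the segment from $z$ to $\xi_t$ break down.

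The paper repairs this by replacing the exterior diagonal with the boundary path $b_-w_-b_+\subset\partial Q$: integrating the projected gradient differences along the two edges yields a piecewise function taking equal values at $b_-$ and $b_+$, and a generalized Rolle argument then produces a point $z_0$ on that path and a unit vector $\tilde e$, with direction between those of $w_--b_-$ and $b_+-w_-$, such that $(\nabla U(z_0)-\nabla_Q U_\Lambda)\cdot\tilde e=0$. The lower bound $\phi$ on the interior angles guarantees that $\tilde e$ still makes an angle in $[\phi,\pi-\phi]$ with the direction obtained from the other diagonal, so the projection lemma still applies. To complete your proof you need this (or an equivalent) device; as written, your argument establishes the lemma only for convex quadrilaterals.
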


\begin{proof}
Consider the quadrilateral $Q$ with vertices $b_-, w_-, b_+, w_+$ in counterclockwise order. Let $e$ denote one of the two diagonals $b_-b_+$ and $w_-w_+$. We will analyze two cases.

In the case that $e$ is contained in $Q$, we integrate the expression $(\nabla U(z') - \nabla_Q U_\Lambda)\cdot e/|e|$ along $e$ and apply Rolle's theorem. This yields a point $z_0 \in e$ where $(\nabla U(z_0) - \nabla_Q U_\Lambda)\cdot e/|e| = 0$. By using the bound $\left|\nabla U(z') - \nabla U(z_0)\right| \leq 4\sqrt{2}h\max_{z \in Q}\left\|D^2 U(z)\right\|$, we obtain for all $z' \in Q$ the inequality \[\frac{\left|(\nabla U(z')-\nabla_Q U_\Lambda) \cdot e\right|}{|e|}\leq 4\sqrt{2}h\max\limits_{z \in Q}\left\|D^2 U(z)\right\|.\]

%-----------------------------------------------------------------
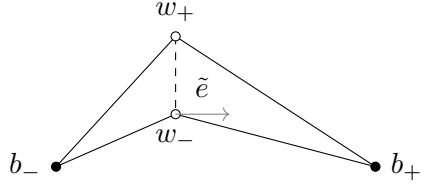
\begin{figure}[htbp]
   \centering
    \beginpgfgraphicnamed{gradient}
			\begin{tikzpicture}[white/.style={circle,draw=black,fill=white,thin,inner sep=0pt,minimum size=1.2mm},
black/.style={circle,draw=black,fill=black,thin,inner sep=0pt,minimum size=1.2mm},
gray/.style={circle,draw=black,fill=gray,thin,inner sep=0pt,minimum size=1.2mm},scale=0.6]
			\clip(-1.7,-4.6) rectangle (7.4,-0.5);
			\draw (-0.6,-4.16)-- (2.02,-1.28);
			\draw (2.02,-1.28)-- (6.4,-4.16);
			\draw (6.4,-4.16)-- (2.02,-3);
			\draw[dashed] (2.02,-1.28)-- (2.02,-3);
			\draw (2.02,-3)-- (-0.6,-4.16);
			%\draw [color=gray] (6.4,-4.16)-- (2.02,-2.7);
			%\draw [color=gray] (2.02,-2.7)-- (-0.6,-4.16);
			\node[white] (w1) [label=below:$w_-$] at (2.02,-3) {};
			\node[white] (w2) [label=above:$w_+$] at (2.02,-1.28) {};
			\node[black] (b1) [label=left:$b_{-}$] at (-0.6,-4.16) {};
			\node[black] (b2) [label=right:$b_{+}$] at (6.4,-4.16) {};
			%\node[gray] (z) [label=left:$z'$] at (2.02,-2.7) {};
			%\draw [color=gray,->] (2.02,-2.7)-- (3.2,-2.7) {};
			\draw [color=gray,->] (2.02,-3)-- (3.2,-3) {};
			\draw (2.6,-2.4) node {$\tilde{e}$};
		\end{tikzpicture}
		\endpgfgraphicnamed
   \caption{Application of (generalized) Rolle's theorem if $Q$ is not convex.}
	\label{fig:gradient}
\end{figure}
%-----------------------------------------------------------------

In the case where $e$ is not contained inside $Q$, we assume that the reflex angle is at $w_-$. We consider the path $b_-w_-b_+$ and integrate $(\nabla U(z') - \nabla_Q U_\Lambda)\cdot (w_--b_-)/|w_--b_-|$ along $b_-w_-$ and $(\nabla U(z') - \nabla_Q U_\Lambda)\cdot (b_+-w_-)/|b_+-w_-|$ along $w_-b_+$. This results in a piecewise linear function that takes the same value at $b_-$ and $b_+$. By applying the generalized Rolle's theorem, we can find a point $z_0$ on $b_-w_-$ or $b_+w_-$ where $(\nabla U(z_0) - \nabla_Q U_\Lambda)\cdot (w_--b_-)/|w_--b_-| = 0$, or \[(\nabla U(w_-)-\nabla_Q U_\Lambda)\cdot \frac{w_--b_-}{|w_--b_-|}\geq 0 \geq (\nabla U(w_-)-\nabla_Q U_\Lambda)\cdot \frac{b_+-w_-}{|b_+-w_-|}\] or the inequality with reversed signs holds true. In either case, there exist $z_0 \in b_-w_-b_+$ and a unit vector $\tilde{e}$ whose argument lies between those of $(w_--b_-)$ and $(b_+-w_-)$, such that $(\nabla U(z_0)-\nabla_Q U_\Lambda)\cdot \tilde{e}=0$. As before, we obtain the inequality\[\left|(\nabla U(z')-\nabla_Q U_\Lambda) \cdot \tilde{e}\right|\leq 4\sqrt{2}h\max\limits_{z \in Q}\left\|D^2 U(z)\right\|\] for all $z' \in Q$. Thus, we have established the inequality \[\left|(\nabla U(z')-\nabla_Q U_\Lambda) \cdot x_i\right|\leq 4\sqrt{2}h\max\limits_{z \in Q}\left\|D^2 u(z)\right\|\] for two unit vectors $x_1,x_2$ (either $e/|e|$ or $\tilde{e}$ for each of the two diagonals). Due to the construction and $\phi$-regularity, the angle between $x_1$ and $x_2$ falls within the range of $\phi$ to $\pi-\phi$. Thus, we can apply Lemma~\ref{lem:projection} to deduce \[\left|\nabla U(z')-\nabla_Q U_\Lambda\right|\leq \frac{8\sqrt{2}}{\sin(\phi)}h\max\limits_{z \in Q}\left\|D^2 u(z)\right\|.\qedhere\]
\end{proof}

\begin{definition}
Let $O \in \Sigma$ be a conical singularity or a regular point of $\Sigma$. We consider the domain $D_O$ of the chart $g_O$ as defined in Section~\ref{sec:Riemann}. In this context, we define the functions $U_O:=U \circ g_O^{-1}$ and $U_p:=U \circ g_p^{-1}$ for any $p \in D_O \backslash {O}$.
\end{definition}

In the subsequent analysis, we fix a point $O \in \Sigma$. Recall that the radius of $D_O$ was denoted by $R_O$. Our goal is to establish results analogous to Lemmas 4.5 to 4.13 presented in the work of Bobenko and Skopenkov \cite{BoSk12}. While we largely follow their proof ideas, we make slight adjustments to accommodate our framework involving quadrilaterals. The lack of boundedness in the partial derivatives of $g_p$ near $O$ prevents us from directly applying Lemma~\ref{lem:gradient} to prove the convergence of the discrete Dirichlet energy.

\begin{lemma}\label{lem:derivative_estimation}
Let $p \in D_O$, $p\neq O$. Then, we have the following estimates:
\begin{align*}
\left\|D^1 U_p(z)|_{z=g_p(p)}\right\|&\leq C^{(1)}_{U,O} r^{\gamma_O-1},\\
\left\|D^2 U_p(z)|_{z=g_p(p)}\right\|&\leq C^{(2)}_{U,O} r^{\gamma_O-2}.
\end{align*}
\end{lemma}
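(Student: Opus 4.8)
The plan is to pass to the complex-analytic chart $g_O$, in which the harmonicity of $U$ becomes ordinary harmonicity, to bound the derivatives of $U_O:=U\circ g_O^{-1}$ by constants there, and then to transport these bounds to the Euclidean chart $g_p$ through the holomorphic transition map $T:=g_O\circ g_p^{-1}$. The first observation is that $U_O$ is harmonic in the ordinary Euclidean sense on the punctured disk $g_O(D_O)\setminus\{0\}$, because $g_O$ is the conformal chart of $\Sigma$ near $O$ and harmonicity is a conformal invariant. Since $U$ is continuous, hence bounded, on a neighborhood of $\overline{D_O}\setminus\{O\}$ (the closure of $D_O$ meets $S$ only in $O$), the removable singularity theorem for bounded harmonic functions shows that $U_O$ extends to a smooth harmonic function on a full neighborhood of the closed disk $\overline{g_O(D_O)}$. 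Consequently there are constants $C_1^{U_O},C_2^{U_O}$, depending only on $U$ and $O$, with $\left\|D^1 U_O\right\|\le C_1^{U_O}$ and $\left\|D^2 U_O\right\|\le C_2^{U_O}$ on all of $\overline{g_O(D_O)}$, in particular at every point $g_O(p)$ with $p\in D_O$.

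Next I would analyze the transition map. As recalled in Section~\ref{sec:Riemann}, $T(\zeta)=(a\zeta+b)^{\gamma_O}$ with $|a|=1$, and $U_p=U_O\circ T$. Writing $\eta:=a\zeta+b$ and evaluating at $\zeta=g_p(p)$, the identity $T(g_p(p))=g_O(p)$ together with $|g_O(p)|=r^{\gamma_O}$ (where $r=|Op|$) gives $|\eta|^{\gamma_O}=r^{\gamma_O}$, hence $|ag_p(p)+b|=r$. Since $T$ is holomorphic, $T'(\zeta)=\gamma_O\,a\,\eta^{\gamma_O-1}$ and $T''(\zeta)=\gamma_O(\gamma_O-1)\,a^2\,\eta^{\gamma_O-2}$, so at $\zeta=g_p(p)$ one has $\left|T'\right|=\gamma_O\,r^{\gamma_O-1}$ and $\left|T''\right|=\gamma_O\left|\gamma_O-1\right|r^{\gamma_O-2}$. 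Because $T$ is holomorphic its real differential is conformal, so the operator norm of $DT$ equals $|T'|$ and the norm of its second-order part is comparable to $|T''|$; the passage between these operator norms and the coordinatewise norms $\left\|D^k\cdot\right\|$ costs only a fixed dimensional constant, which I absorb into the final constants. (For a regular point $\gamma_O=1$ the map $T$ is affine, $T''=0$, and the claimed bounds hold trivially.)

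Then I would apply the chain rule. At first order, $D^1 U_p=(D^1 U_O\circ T)\,DT$, whence $\left\|D^1 U_p\right\|\le C\,C_1^{U_O}\,\gamma_O\,r^{\gamma_O-1}=:C^{(1)}_{U,O}\,r^{\gamma_O-1}$. At second order the chain rule produces the two contributions $(D^2U_O\circ T)[DT,DT]$ and $(D^1U_O\circ T)\,D^2T$, so that
\[\left\|D^2 U_p\right\|\le C\bigl(C_2^{U_O}\,\gamma_O^2\,r^{2\gamma_O-2}+C_1^{U_O}\,\gamma_O\left|\gamma_O-1\right|r^{\gamma_O-2}\bigr).\]
Since $r\le R_O$ and $\gamma_O>0$, we have $r^{2\gamma_O-2}=r^{\gamma_O}\,r^{\gamma_O-2}\le R_O^{\gamma_O}\,r^{\gamma_O-2}$, so the $r^{\gamma_O-2}$ term dominates and both contributions are bounded by a constant multiple of $r^{\gamma_O-2}$, yielding $\left\|D^2 U_p\right\|\le C^{(2)}_{U,O}\,r^{\gamma_O-2}$.

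The hard part will be the first step: one must be sure that $U_O$ genuinely extends across the cone point and that the resulting derivative bounds are uniform on the closed disk $\overline{g_O(D_O)}$ rather than merely locally on compact subsets of the open disk. This is why I insist that $U$ be harmonic and bounded on a punctured neighborhood of $\overline{D_O}$, which (after possibly shrinking $R_O$) places $\overline{g_O(D_O)}$ in the interior of the domain of the harmonic extension and makes the constants $C_1^{U_O},C_2^{U_O}$ legitimate. Everything afterwards is the elementary differentiation of $\zeta\mapsto(a\zeta+b)^{\gamma_O}$ together with careful bookkeeping of the powers of $r$; the only subtlety there is that for small $r$ the exponent $\gamma_O-2$ beats $2\gamma_O-2$, so the term coming from $T''$---that is, from the curvature of the cone chart---is the one that sets the rate.
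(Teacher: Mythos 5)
Your proposal is correct and follows essentially the same route as the paper: write $U_p=U_O\circ(g_O\circ g_p^{-1})$, bound the derivatives of $U_O$ by constants on $\overline{g_O(D_O)}$ (the paper simply asserts the smooth extension, which you justify via removability), compute $\left\|D^k(g_O\circ g_p^{-1})\right\|$ from the explicit form $(a\zeta+b)^{\gamma_O}$, and absorb the $r^{2\gamma_O-2}$ term into $r^{\gamma_O-2}$ using $r\le R_O$. The only differences are cosmetic (dimensional constants versus the paper's explicit factors of $2$ and $4$).
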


\begin{proof}
Using the chain rule and the fact that $U_p=U\circ g_O^{-1}\circ g_O \circ g_p^{-1}$, we have
\[\left\|D^1 U_p(z)|_{z=g_p(p)}\right\|\leq 2\left\|D^1 U_O(z)|_{z=g_O(p)}\right\|\cdot \left\|D^1  \left(g_O \circ g_p^{-1}\right)(z)|_{z=g_p(p)}\right\|.\]
Applying Leibniz' rule, we obtain for the second derivative
\begin{align*}
\left\|D^2 U_p(z)|_{z=g_p(p)}\right\|&\leq 4\left\|D^2 U_O(z)|_{z=g_O(p)}\right\|\cdot \left\|D^1  \left(g_O \circ g_p^{-1}\right)(z)|_{z=g_p(p)}\right\|^2\\&+2\left\|D^1 U_O(z)|_{z=g_O(p)}\right\|\cdot \left\|D^2  \left(g_O \circ g_p^{-1}\right)(z)|_{z=g_p(p)}\right\|.
\end{align*}

By construction, the map $g_O \circ g_p^{-1}$ is of the form $z\mapsto (az)^{\gamma_O}$ with $|a|=1$. This yields
\begin{align*}
\left\|D^1 \left(g_O \circ g_p^{-1}\right)(z)|_{z=g_p(p)}\right\|&=\gamma_O r^{\gamma_O-1},\\
\left\|D^2 \left(g_O \circ g_p^{-1}\right)(z)|_{z=g_p(p)}\right\|&=\gamma_O(\gamma_O-1) r^{\gamma_O-2}.
\end{align*}

Since $U_O$ extends smoothly to a neighborhood of the disk $g_O(D_O)$ of radius $R_O$ in $\mathds{C}$, we can define $\max_{z\in g_O(\overline{D_O})}|D^k U_O(z)|$. Combining the previous results, we obtain
\begin{align*}
\left\|D^1 U_p(z)|_{z=g_p(p)}\right\|&\leq 2\max_{z\in g_O(\overline{D_O})}\|D^1 U_O(z)\|\cdot \gamma_O r^{\gamma_O-1},\\
\left\|D^2 U_p(z)|_{z=g_p(p)}\right\|&\leq 4\max_{z\in g_O(\overline{D_O})}\|D^2 U_O(z)\|\cdot \gamma_O^2 r^{2\gamma_O-2}+2\max_{z\in g_O(\overline{D_O})}\|D^1 U_O(z)\|\cdot \gamma_O(\gamma_O-1) r^{\gamma_O-2}\\
&\leq 2\gamma_O \left( 2\max_{z\in g_O(\overline{D_O})}\|D^2 U_O(z)\| \gamma_O R_O^{\gamma_O}+\max_{z\in g_O(\overline{D_O})}\|D^1 U_O(z)\|(\gamma_O-1)\right)r^{\gamma_O-2}.\qedhere
\end{align*}
\end{proof}

\begin{lemma}\label{lem:gradient_outside}
Let $Q \subset D_O$ be a quadrilateral that does not intersect the disk of radius $h$ around $O$. If $(\Sigma,\Lambda)$ is $h$-adapted, we change the radius to $h^{1/\gamma_O}$ and assume that $\gamma_O \leq 1/2$. Then, we have the following estimates:
 \begin{align*}
\int_Q \left|\nabla U-\nabla_Q U_\Lambda\right|^2 &\leq C^{(3)}_{U,O,\phi}h\int_Q r^{2\gamma_O-2}drd\psi,\\
\textnormal{and}\quad \int_Q \left|\nabla U-\nabla_Q U_\Lambda\right|^2 &\leq C^{(3,h)}_{u,O,\phi}h\int_Q r^{\gamma_O-1}drd\psi \quad \textnormal{in the $h$-adapted case.}
\end{align*}
\end{lemma}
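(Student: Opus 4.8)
The plan is to derive the estimate from the pointwise gradient approximation of Lemma~\ref{lem:gradient}, feeding it with the Hessian blow-up of order $r^{\gamma_O-2}$ near $O$ supplied by Lemma~\ref{lem:derivative_estimation}, and then to convert the resulting bound from the flat area measure to the polar measure $dr\,d\psi$ appearing on the right-hand side. Throughout, write $r_{\min}$ and $r_{\max}$ for the smallest and largest geodesic distances from $O$ to points of $Q$, and $h_Q$ for the maximal edge length of $Q$.

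Since $Q$ avoids the disk of radius $h$ (respectively $h^{1/\gamma_O}$) about $O$ and its closure meets no other singularity, $Q$ lies in the flat part of $\Sigma$ and is not incident to a conical singularity, so Lemma~\ref{lem:gradient} applies in a single isometric chart $g$ covering a neighborhood of $Q$ and yields the uniform bound \[\max_{z\in Q}\left|\nabla U(z)-\nabla_Q U_\Lambda\right|\le \frac{8\sqrt{2}}{\sin(\phi)}\,h_Q\,\max_{z\in Q}\left\|D^2 U(z)\right\|.\] To control the Hessian factor I use that, for each $p\in Q$, Lemma~\ref{lem:derivative_estimation} bounds $\|D^2 U_p\|$ at the centre of the chart $g_p$ by $C^{(2)}_{U,O}\,r(p)^{\gamma_O-2}$; as $g$ and $g_p$ are both isometric charts of $\mathbb{C}$ they differ by a Euclidean isometry, under which the norm $\|D^2\,\cdot\,\|$ changes only by a bounded factor, so the same bound holds in $g$ up to a constant. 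Because $\gamma_O-2<0$, the function $r^{\gamma_O-2}$ is decreasing, hence $\max_{z\in Q}\|D^2 U(z)\|\le C\,r_{\min}^{\gamma_O-2}$.

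Squaring, integrating against the cone area element $dA=r\,dr\,d\psi$, and pulling out the uniform constant gives \[\int_Q\left|\nabla U-\nabla_Q U_\Lambda\right|^2\le C\,h_Q^2\,r_{\min}^{2\gamma_O-4}\,\textnormal{area}(Q).\] On the other side, I rewrite the target integrals in the area measure as $\int_Q r^{2\gamma_O-2}\,dr\,d\psi=\int_Q r^{2\gamma_O-3}\,dA$ and $\int_Q r^{\gamma_O-1}\,dr\,d\psi=\int_Q r^{\gamma_O-2}\,dA$; since both exponents are negative, the integrands attain their minimum at $r_{\max}$, so the integrals are bounded below by $r_{\max}^{2\gamma_O-3}\,\textnormal{area}(Q)$ and $r_{\max}^{\gamma_O-2}\,\textnormal{area}(Q)$, respectively. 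The common factor $\textnormal{area}(Q)$ then cancels, and the lemma reduces to the two scalar inequalities \[C\,h_Q^2\,r_{\min}^{2\gamma_O-4}\le C^{(3)}_{U,O,\phi}\,h\,r_{\max}^{2\gamma_O-3}\qquad\text{and}\qquad C\,h_Q^2\,r_{\min}^{2\gamma_O-4}\le C^{(3,h)}_{u,O,\phi}\,h\,r_{\max}^{\gamma_O-2}.\]

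These close once $r_{\min}$ and $r_{\max}$ are shown to be comparable over the small patch $Q$, which is where I expect the main work to lie. In the general case $\text{diam}(Q)\le 2h_Q\le 2h\le 2r_{\min}$ gives $r_{\max}\le 3r_{\min}$, and with $h_Q\le h$ the first inequality reduces, after absorbing the $O$- and $\phi$-dependent constants, to $h\le r_{\min}$, which holds by hypothesis. In the adapted case the edge lengths themselves shrink near $O$: Lemma~\ref{lem:adapted} gives $h_Q\le\bigl(1+\tfrac{\pi}{2\gamma_O}\bigr)\,h\,r_{\max}^{1-\gamma_O}$, and combining $\text{diam}(Q)\le 2h_Q$ with $h\le r_{\min}^{\gamma_O}$ (from $r_{\min}\ge h^{1/\gamma_O}$) yields, via a short bootstrap, a comparability $r_{\max}\le C_O\,r_{\min}$ with a constant depending on $\gamma_O$; substituting the adapted bound for $h_Q^2$ then reduces the second inequality to $h^{1/\gamma_O}\le C_O'\,r_{\min}$, again guaranteed by the radius assumption. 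The delicate points are thus the chart-change argument transferring the centred Hessian bounds of Lemma~\ref{lem:derivative_estimation} to a uniform bound over the possibly non-convex $Q$, and the bootstrap establishing $r_{\min}\asymp r_{\max}$ in the adapted regime, where the $\gamma_O$-dependence of the angular distortion enters the constant.
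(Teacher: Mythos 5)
Your proposal is correct and follows essentially the same route as the paper: combine the pointwise bound of Lemma~\ref{lem:gradient} with the Hessian estimate $\|D^2U\|\lesssim r^{\gamma_O-2}$ from Lemma~\ref{lem:derivative_estimation}, establish $r_{\max}\leq 3r_{\min}$ (resp.\ $r_{\max}\leq C_O r_{\min}$ via Lemma~\ref{lem:adapted} in the adapted case), and trade a factor of $h$ for a power of $r$ using $h\leq r$ (resp.\ $h\leq r^{\gamma_O}$). The only difference is bookkeeping — you bound the target integral from below by its minimal integrand times $\textnormal{area}(Q)$ and reduce to a scalar inequality, whereas the paper substitutes the comparison $r\asymp r(z)$ directly into the integrand — and your explicit treatment of $r_{\min}$ versus the closest vertex is, if anything, slightly more careful than the paper's.
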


\begin{proof}
Let $z=(r(z),\psi(z)) \in Q$ denote the vertex of $Q$ closest to $O$. The distance $r$ from any other point in $Q$ to $O$ satisfies $h\leq r(z)\leq r \leq r(z)+2h\leq 3r(z)$. Combining Lemmas~\ref{lem:gradient} and~\ref{lem:derivative_estimation}, we obtain:
\begin{align*}
\int_Q \left|\nabla U-\nabla_Q U_\Lambda\right|^2 &\leq \textnormal{area}(Q)  \max_{w \in Q} \left|\nabla U(w)-\nabla_Q U_\Lambda\right|^2\\
&\leq \frac{128}{\sin^2(\phi)} h^2 \textnormal{area}(Q) \left(C^{(2)}_{U,O}\right)^2 3^{2|\gamma_O-2|_0} r(z)^{2\gamma_O-4}\\
& \leq \frac{128 \left(C^{(2)}_{U,O}\right)^2}{\sin^2(\phi)} h 3^{2|\gamma_O-2|_0}3^{2|2-\gamma_O|_0} \int_Q  h r^{2\gamma_O-4} rdrd\psi\\
& \leq \frac{128 \left(C^{(2)}_{U,O}\right)^2}{\sin^2(\phi)} 3^{2|\gamma_O-2|} h  \int_Q  r^{2\gamma_O-2} drd\psi.
\end{align*}

In the $h$-adapted case, $h^{1/\gamma_O}\leq r(z)\leq r \leq r(z)+2\left(1+\frac{\pi}{2\gamma_O}\right)hr(z)^{1-\gamma_O}\leq \left(3+\frac{\pi}{\gamma_O}\right)r(z)$ by Lemma~\ref{lem:adapted}. Noting that $\gamma_O \leq 1/2$ and $h \leq r^{\gamma_O}$ by assumption, we have
\begin{align*}
\int_Q \left|\nabla U-\nabla_Q U_\Lambda\right|^2 &\leq \frac{128}{\sin^2(\phi)} h^2\left(1+\frac{\pi}{2\gamma_O}\right)^2 r(z)^{2-2\gamma_O} \textnormal{area}(Q) \left(C^{(2)}_{U,O}\right)^2 r(z)^{2\gamma_O-4}\\
& \leq \frac{128 \left(C^{(2)}_{U,O}\right)^2}{\sin^2(\phi)}\left(1+\frac{\pi}{2\gamma_O}\right)^2\left(3+\frac{\pi}{\gamma_O}\right)^{2} h \int_Q  h r^{-2} rdrd\psi\\
& \leq \frac{128 \left(C^{(2)}_{U,O}\right)^2}{\sin^2(\phi)}\left(1+\frac{\pi}{2\gamma_O}\right)^2\left(3+\frac{\pi}{\gamma_O}\right)^{2} h  \int_Q  r^{\gamma_O-1} drd\psi. \qedhere
\end{align*}
\end{proof}

\begin{lemma}\label{lem:contribution_outside}
Let $T'=T'_{O,h}$ and $T'_{(h)}=T'_{O,h,(h)}$ be the sets of all quadrilaterals $Q \subset D_O$ of $\Lambda$ that do not intersect the disks of radius $h$ and $h^{1/\gamma_O}$ (if $\gamma_O<1/2$, otherwise of radius $h$) around $O$, respectively. Additionally, we assume $h \leq R_O$. Then,
\begin{align*}
\sum\limits_{Q \in T'} \int_Q \left|\nabla U-\nabla_Q U_\Lambda\right|^2 \leq C^{(4)}_{U,O,\phi}\lambda_O(h):= C^{(4)}_{U,O,\phi}\cdot\begin{cases}
        h & \text{if }\gamma_O>1/2,\\
        h \left|\log\frac{h}{R_O} \right| & \text{if }\gamma_O=1/2,\\
				h^{2\gamma_O} & \text{if }\gamma_O<1/2.
    \end{cases}
\end{align*}
If $(\Sigma,\Lambda)$ is $h$-adapted and $\gamma_O \leq 1/2$, then
\begin{align*}
\sum\limits_{Q \in T'} \int_Q \left|\nabla U-\nabla_Q U_\Lambda\right|^2 \leq C^{(4,h)}_{U,O,\phi}h.
\end{align*}
\end{lemma}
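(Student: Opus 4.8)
The plan is to sum the per-quadrilateral bounds of Lemma~\ref{lem:gradient_outside} over $Q \in T'$ and to recognize the resulting sum as a single radial integral over an annulus in cone coordinates. First I would record that, by construction, the quadrilaterals in $T'$ are pairwise disjoint, are all contained in $D_O$, and---since none of them meets the disk of radius $h$ (respectively $h^{1/\gamma_O}$) around $O$---every point of every such $Q$ satisfies $r \geq h$ (respectively $r \geq h^{1/\gamma_O}$). In the polar chart $g_O$ the angular variable $\psi$ ranges over an interval of length $2\pi/\gamma_O$, so the coordinate images of the faces $Q\in T'$ are disjoint subsets of the rectangle $\{h \leq r \leq R_O\}\times\{0\leq \psi < 2\pi/\gamma_O\}$. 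Applying the first estimate of Lemma~\ref{lem:gradient_outside} termwise and bounding the sum of the coordinate integrals by the integral over this rectangle then gives
\[\sum_{Q\in T'}\int_Q\left|\nabla U-\nabla_Q U_\Lambda\right|^2 \leq C^{(3)}_{U,O,\phi}\,h\,\frac{2\pi}{\gamma_O}\int_h^{R_O} r^{2\gamma_O-2}\,dr.\]

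Second, I would evaluate the radial integral $\int_h^{R_O} r^{2\gamma_O-2}\,dr$, whose behaviour is governed by the position of the exponent $2\gamma_O-2$ relative to $-1$, i.e.\ by whether $\gamma_O$ exceeds, equals, or is less than $1/2$. For $\gamma_O>1/2$ the exponent exceeds $-1$, the integral is bounded by $R_O^{2\gamma_O-1}/(2\gamma_O-1)$ uniformly in $h$, and the prefactor $h$ yields the rate $h$. For $\gamma_O=1/2$ the integrand is $r^{-1}$ and the integral equals $\log(R_O/h)=\left|\log(h/R_O)\right|$, giving the rate $h\left|\log(h/R_O)\right|$. For $\gamma_O<1/2$ the exponent lies below $-1$, the integral is dominated by its lower endpoint and is at most $h^{2\gamma_O-1}/(1-2\gamma_O)$, so that multiplication by $h$ yields $h^{2\gamma_O}$. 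Folding the factor $2\pi/\gamma_O$ together with the case-dependent constants $1/\left|2\gamma_O-1\right|$ (and its absence when $\gamma_O=1/2$) into the single constant $C^{(4)}_{U,O,\phi}$ produces exactly the claimed bound $C^{(4)}_{U,O,\phi}\lambda_O(h)$.

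Third, for the $h$-adapted case with $\gamma_O\leq 1/2$ I would repeat the summation, this time invoking the second estimate of Lemma~\ref{lem:gradient_outside} with integrand $r^{\gamma_O-1}$ and the reduced inner radius $h^{1/\gamma_O}$ (respectively $h$ when $\gamma_O=1/2$). The decisive point is that $\gamma_O-1>-1$ for every admissible $\gamma_O$, so that $\int_{h^{1/\gamma_O}}^{R_O} r^{\gamma_O-1}\,dr=(R_O^{\gamma_O}-h)/\gamma_O\leq R_O^{\gamma_O}/\gamma_O$ is bounded uniformly in $h$; consequently the whole sum is at most $C^{(4,h)}_{U,O,\phi}h$, the desired linear rate. (When $\gamma_O=1/2$ one uses instead $\int_h^{R_O}r^{-1/2}\,dr\leq 2R_O^{1/2}$, again uniform in $h$.)

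I do not anticipate a deep obstacle; the argument is essentially careful bookkeeping. The one point demanding genuine care is the initial reduction: one must verify that the coordinate images of the quadrilaterals in $T'$ are truly disjoint and jointly contained in the stated annular rectangle, so that the termwise estimate sums to an honest single integral rather than over-counting. Everything afterward is the elementary three-case evaluation of a power integral, where the only subtlety is that the integrability threshold $2\gamma_O-2=-1$, i.e.\ $\gamma_O=1/2$, is precisely the value separating the linear rate $h$ from the degraded rates $h\left|\log(h/R_O)\right|$ and $h^{2\gamma_O}$, and that $h$-adaptation lowers the integrand's exponent from $2\gamma_O-2$ to $\gamma_O-1$, pushing it back above the integrability threshold and thereby restoring the linear rate.
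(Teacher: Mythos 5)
Your proposal is correct and follows essentially the same route as the paper's proof: apply Lemma~\ref{lem:gradient_outside} termwise, use disjointness of the quadrilaterals in the cone coordinates to dominate the sum by the integral over the annulus $\{h \leq r \leq R_O\} \times \{0 \leq \psi < 2\pi/\gamma_O\}$ (respectively with inner radius $h^{1/\gamma_O}$), and evaluate the resulting power integral in the three cases; your case-by-case constants and the $h$-adapted computation match the paper's. The one point you flag as needing care (disjointness of the coordinate images) is indeed the only geometric input, and the paper passes over it silently in the step "extending the integration domain to an annulus."
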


\begin{proof}
Applying Lemma~\ref{lem:gradient_outside} to each quadrilateral $Q \in T'$ and then extending the integration domain to an annulus, we obtain the following inequalities:
\begin{align*}
\sum\limits_{Q \in T'} \int_Q \left|\nabla U-\nabla_Q U_\Lambda\right|^2 &\leq C^{(3)}_{U,O,\phi}h \sum\limits_{Q \in T'} \int_Q r^{2\gamma_O-2}drd\psi\\
&\leq C^{(3)}_{U,O,\phi}h \int_h^{R_O}\int_0^{2\pi/\gamma_O} r^{2\gamma_O-2}drd\psi=\frac{2\pi C^{(3)}_{U,O,\phi}}{\gamma_O}\int_h^{R_O} r^{2\gamma_O-2}dr\\
&\leq \frac{2\pi C^{(3)}_{U,O,\phi}}{\gamma_O}\cdot \begin{cases}
       h R_O^{2\gamma_O-1}/(2\gamma_O-1) & \text{if }\gamma_O>1/2,\\
       h \left|\log\frac{h}{R_O} \right| & \text{if }\gamma_O=1/2,\\
		 	h^{2\gamma_O}/(1-2\gamma_O) & \text{if }\gamma_O<1/2.
    \end{cases}
\end{align*}
For the $h$-adapted case:
\begin{align*}
\sum\limits_{Q \in T'_{(h)}} \int_Q \left|\nabla U-\nabla_Q U_\Lambda\right|^2 &\leq C^{(3,h)}_{U,O,\phi}h\int_{h^{1/\gamma_O}}^{R_O}\int_0^{2\pi/\gamma_O} r^{\gamma_O-1}drd\psi\leq\frac{2\pi C^{(3,h)}_{U,O,\phi}}{\gamma_O^2}R_O^{\gamma_O}h. \qedhere
\end{align*}
\end{proof}

In the following, we aim to estimate the integral of $\left|\nabla U(z)-\nabla_Q U_\Lambda\right|^2$ close to the singularity $O$. To do this, we first find an upper bound for $\left|\nabla_Q U_{\Lambda}\right|^2$. By the definition of the discrete gradient and using projection Lemma~\ref{lem:projection}, it suffices to bound the difference quotients along the diagonals of a quadrilateral.

\begin{lemma}\label{lem:difference_quotient}
For any two points $x \neq y$ in $D_O$, we have \[\frac{|U(x)-U(y)|}{|xy|}\leq C^{(5)}_{U,O} \max\{|Ox|,|Oy|\}^{\gamma_O-1}.\]
\end{lemma}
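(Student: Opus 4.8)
The plan is to factor the estimate through the chart $g_O$, in which $U$ becomes the smooth function $U_O = U\circ g_O^{-1}$, and then to compensate for the distortion of $g_O$ near $O$ by a purely geometric comparison of distances. Write $r_1 := |Ox|$ and $r_2 := |Oy|$ and assume without loss of generality $r_1 \le r_2$, so that $\max\{|Ox|,|Oy|\} = r_2$. First I would invoke the Lipschitz property of $U_O$: since $U_O$ extends smoothly to a neighborhood of the closed round disk $g_O(\overline{D_O})$, it is Lipschitz there with constant $L := \max_{z\in g_O(\overline{D_O})} \|D^1 U_O(z)\|$, and because the disk is convex the straight segment joining $g_O(x)$ to $g_O(y)$ stays inside it. Hence
\[
|U(x)-U(y)| = |U_O(g_O(x)) - U_O(g_O(y))| \le L\,|g_O(x)-g_O(y)|,
\]
and it remains to bound $|g_O(x)-g_O(y)|$ by a constant times $|xy|\,r_2^{\gamma_O-1}$.

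For the geometric comparison I would use polar coordinates on the cone $D_O$, writing $x=(r_1,\psi_1)$, $y=(r_2,\psi_2)$ and letting $\alpha := \angle xOy \in [0,\pi/\gamma_O]$ be the intrinsic angle at the apex. Since $g_O$ acts by $(r,\psi)\mapsto(r^{\gamma_O},\gamma_O\psi)$, the images lie in the flat disk at angular separation $\gamma_O\alpha\le\pi$, so the planar law of cosines gives $|g_O(x)-g_O(y)| = r_2^{\gamma_O}\,|1-s^{\gamma_O}|$, where $s := (r_1/r_2)\,e^{i\alpha}$ satisfies $|s|\le 1$ and $\arg s\in[0,\pi]$, and $s^{\gamma_O} := |s|^{\gamma_O}e^{i\gamma_O\arg s}$. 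As long as the minimizing geodesic stays inside $D_O$ (which is the relevant case in the applications, where $x,y$ are the endpoints of a diagonal of a quadrilateral $Q\subset D_O$, and which holds in general because the distance to $O$ is convex along a Euclidean segment), the intrinsic distance is $|xy| = r_2\,|1-s|$ when $\alpha\le\pi$ and $|xy| = r_1+r_2$ when $\alpha>\pi$ (a through-apex geodesic, possible only for $\gamma_O<1$). Dividing, the claim reduces for $\alpha\le\pi$ to the scale-invariant inequality $|1-s^{\gamma_O}|\le C_{\gamma_O}\,|1-s|$, since then $|g_O(x)-g_O(y)|/|xy| = r_2^{\gamma_O-1}\,|1-s^{\gamma_O}|/|1-s|$.

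The analytic heart is therefore to show that $f(s) := (1-s^{\gamma_O})/(1-s)$ is bounded on the closed upper half of the unit disk, and this is where the only real difficulty lies: $f$ is a ratio of two quantities that both vanish at $s=1$ (the configuration $x=y$), so one must check that this singularity is removable. Expanding near $s=1$ gives $1-s^{\gamma_O}\sim\gamma_O(1-s)$, so $f$ extends continuously with $f(1)=\gamma_O$; on the rest of the (compact) half-disk $f$ is manifestly continuous, so $|f|\le C_{\gamma_O}$ for a constant depending only on $\gamma_O$, which can be computed explicitly as the maximum of an elementary function. The through-apex range $\alpha>\pi$ is disposed of directly: there $|1-s^{\gamma_O}|\le 1+|s|^{\gamma_O}\le 2\le 2\,(1+r_1/r_2)$, so that $|g_O(x)-g_O(y)|/|xy|\le 2\,r_2^{\gamma_O-1}$. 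Combining the two steps yields the asserted bound with $C^{(5)}_{U,O} = L\cdot\max\{C_{\gamma_O},2\}$; note that for a regular point $\gamma_O=1$ one has $f\equiv 1$ and the estimate reduces to the plain Lipschitz bound for $U$.
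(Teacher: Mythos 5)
Your proof is correct, but it takes a genuinely different route from the paper. The paper never passes fully to the smooth chart: it integrates the pointwise bound $\left\|D^1 U_p\right\|\leq C^{(1)}_{U,O}r^{\gamma_O-1}$ from Lemma~\ref{lem:derivative_estimation} along the segment $xy$, splits the line element into radial and angular parts, and then runs a case analysis (projection of $O$ inside or outside the segment; $|xy|\geq|Oy|$ versus $|xy|<|Oy|$) using Bernoulli's inequality, concavity of the sine, and the law of sines. You instead use that $U_O=U\circ g_O^{-1}$ is Lipschitz on the convex disk $g_O(\overline{D_O})$ and push all of the conical geometry into the single scale-invariant inequality $|1-s^{\gamma_O}|\leq C_{\gamma_O}|1-s|$ on the closed upper half of the unit disk, which you correctly reduce to a removable-singularity-plus-compactness argument (and you rightly treat the through-apex regime $\alpha>\pi$, relevant only for $\gamma_O<1$, separately). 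Your version is shorter and conceptually cleaner, and it makes transparent that the only input is the metric distortion of $z\mapsto z^{\gamma_O}$. What the paper's route buys in exchange is twofold: the constants come out fully explicit (the paper stresses that all $C^{(i)}$ should be computable, whereas your $C_{\gamma_O}=\max|f|$ still requires an elementary but unperformed optimization over the half-disk), and the integrate-along-the-path machinery is recycled almost verbatim in Lemma~\ref{lem:contribution_inside3} to bound $\sum_{e\in\partial Q}\int_e|\partial_n U|$, which your chart trick would not directly deliver. One small bookkeeping point: with the paper's definition of $\|D^1F\|$ as the maximum of the partial derivatives, your Lipschitz constant should carry a factor $\sqrt{2}$; and the implicit identification of $|xy|$ with the length of the developed straight segment is shared by the paper's own proof, so it is not held against you.
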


\begin{proof}
We use polar coordinates $p=(r,\psi)$ in $D_O$. By Lemma~\ref{lem:derivative_estimation}, we obtain:
\begin{align*}
|U(x)-U(y)|\leq \int_{xy} \sqrt{2}\left\|D^1 U_p(z)|_{z=g_p(p)}\right\|(|dr|+r|d\psi|)\leq \sqrt{2} C^{(1)}_{U,O} \int_{xy} r^{\gamma_O-1} (|dr|+r|d\psi|).
\end{align*}

Without loss of generality, let us assume $|Oy|\geq |Ox|$. If the projection of $O$ onto the line $xy$ lies outside the segment $xy$, then
\[\int_{xy} r^{\gamma_O-1} |dr|\leq \frac{1}{\gamma_O}|Oy|^{\gamma_O}.\] 

If the projection of $O$, denoted by $q$, lies between $x$ and $y$, then
\begin{align*}
\int_{xy} r^{\gamma_O-1} |dr|= \frac{1}{\gamma_O}\left(|Oy|^{\gamma_O}+|Ox|^{\gamma_O}-2|Oq|^{\gamma_O}\right)\leq \frac{2}{\gamma_O}\left(|Oy|^{\gamma_O}-\max\left\{0,|Oy|-|xy|\right\}^{\gamma_O}\right).
\end{align*}

In the case that $|xy|\geq |Oy|$, we obtain
\begin{align*}
|U(x)-U(y)|&\leq \frac{2\sqrt{2}C^{(1)}_{U,O}}{\gamma_O}|Oy|^{\gamma_O} +  \sqrt{2}C^{(1)}_{U,O} \left|\int_{xy} r^{\gamma_O}d\psi\right|\\
&\leq \frac{2\sqrt{2}C^{(1)}_{U,O}}{\gamma_O}|xy||Oy|^{\gamma_O-1} +\sqrt{2}C^{(1)}_{U,O} |xy||Oy|^{\gamma_O-1} |\angle xOy|\\ &\leq \left(\frac{2\sqrt{2}}{\gamma_O}+\sqrt{2}\pi\right) C^{(1)}_{U,O}|xy||Oy|^{\gamma_O-1}.
\end{align*}

If $|xy|<|Oy|$, then $|\angle xOy|$ cannot be the largest angle of the triangle $\triangle xOy$, so $|\angle xOy|< \pi/2$ and $|\angle xOy|<(\pi/2)\sin|\angle xOy|$ by the concavity of the sine function. Applying the sine theorem, we have $|Oy| \sin|\angle xOy|\leq |xy|$. By using Bernoulli's inequality for $\gamma_O<1$ and the monotonicity of the power function for $\gamma_O \geq 1$, we get \[|Oy|^{\gamma_O}-(|Oy|-|xy|)^{\gamma_O}=|Oy|^{\gamma_O}\left(1-(1-|xy|/|Oy|)^{\gamma_O}\right) \leq \max\{1,\gamma_O\}|xy||Oy|^{\gamma_O-1}.\] Hence,
\begin{align*}
|U(x)-U(y)|&\leq \frac{2\sqrt{2}C^{(1)}_{U,O}}{\gamma_O}\max\{1,\gamma_O\}|xy||Oy|^{\gamma_O-1} +  \sqrt{2}C^{(1)}_{U,O}|Oy|^{\gamma_O} |\angle xOy|\\
&\leq \frac{2\sqrt{2}C^{(1)}_{U,O}}{\gamma_O}\max\{1,\gamma_O\}|xy||Oy|^{\gamma_O-1} +  \frac{\pi\sqrt{2}C^{(1)}_{U,O}}{2}|Oy|^{\gamma_O-1} |xy|. \qedhere
\end{align*}
\end{proof}

We now proceed with geometric bounds for a quadrilateral $Q$ in a $\phi$-regular discretization of $\Sigma$.

\begin{lemma}\label{lem:geometric_bounds}
Let $Q \subset D_O$ be a quadrilateral with vertices $w,x,y,z$, and let $e$ be any of its four edges. Let $z$ be the vertex of maximum distance to the singularity $O$. Then,
\[ |e|^2 \leq \frac{2}{\sin^3(\phi)} \textnormal{area}(Q) \quad \textnormal{and} \quad |Oz|\leq \frac{2+\sin(\phi)}{\sin(\phi)}\max\{|Ow|,|Oy|\}.\]
\end{lemma}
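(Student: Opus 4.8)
The plan is to reduce both inequalities to two elementary facts about an arbitrary simple quadrilateral $Q$ with diagonal lengths $d_1,d_2$ and diagonal intersection angle $\theta$: an exact area formula, and a uniform bound of each edge by each diagonal. For the area I would record the identity
\[
\mathrm{area}(Q)=\tfrac12 d_1 d_2\sin\theta ,
\]
which expresses the area as half the modulus of the cross product of the two diagonal vectors. The point to stress is that this is a purely algebraic identity, equivalent to the shoelace formula, so it holds for \emph{every} simple quadrilateral, convex or not; this is what lets me treat non-convex $Q$ without a separate argument. Since $(\Sigma,\Lambda)$ is $\phi$-regular we have $\theta\in[\phi,\pi-\phi]$, hence $\sin\theta\ge\sin\phi$ and therefore $\mathrm{area}(Q)\ge\tfrac12 d_1 d_2\sin\phi$.

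Next I would prove the edge bound: for any edge $e$ and any diagonal $d$ of $Q$, one has $|e|\le d/\sin\phi$. Choosing the triangle $T$ spanned by $e$, the diagonal $d$, and the remaining edge of $Q$, arranged so that $d$ is opposite a vertex $V$ incident to $e$, the law of sines gives $|e|\le d/\sin(\angle_V)$, where $\angle_V$ is the angle of $T$ at $V$. If $\angle_V\le\pi/2$ then $\sin(\angle_V)\ge\sin\phi$ because $\angle_V\ge\phi$; if $\angle_V>\pi/2$ then $d$ is opposite the unique obtuse angle and is the longest side of $T$, so $|e|\le d\le d/\sin\phi$. The one place non-convexity enters is the verification that $\angle_V\ge\phi$ always: when $V$ is convex, $\angle_V$ is the interior angle of $Q$ at $V$, which is $\ge\phi$; when $V$ is the (unique) reflex vertex, $\angle_V$ is the explementary angle $2\pi-\angle_V^{\mathrm{int}}$, which equals the sum of the three other interior angles and is therefore $\ge 3\phi\ge\phi$. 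Applying this with \emph{both} diagonals yields $|e|^2\le d_1 d_2/\sin^2\phi$ for every edge, and combining with the area lower bound gives
\[
|e|^2\le \frac{d_1 d_2}{\sin^2\phi}=\frac{2\cdot\tfrac12 d_1 d_2\sin\phi}{\sin^3\phi}\le\frac{2}{\sin^3\phi}\,\mathrm{area}(Q),
\]
which is the first inequality.

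For the second inequality I would fix the cyclic labelling $w,x,y,z$, so that $w$ and $y$ are exactly the two neighbours of $z$ and $wy$ is the diagonal separating $z$ from $x$. By the metric (geodesic) triangle inequality on $\Sigma$, $|Oz|\le|Ow|+|wz|$. The two edges $wz$ and $yz$ emanating from $z$ are the sides of the triangle $wzy$ adjacent to the vertex $z$, whose opposite side is the diagonal $wy$; the edge bound just established (with $\angle_z\ge\phi$, again using the explement argument if $z$ happens to be reflex) gives $|wz|\le|wy|/\sin\phi$. Since $|wy|\le|Ow|+|Oy|\le 2\max\{|Ow|,|Oy|\}$ by the triangle inequality, I obtain
\[
|Oz|\le|Ow|+\frac{2\max\{|Ow|,|Oy|\}}{\sin\phi}\le\left(1+\frac{2}{\sin\phi}\right)\max\{|Ow|,|Oy|\}=\frac{2+\sin\phi}{\sin\phi}\max\{|Ow|,|Oy|\},
\]
as claimed; the exact match of the constant is reassuring.

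The main obstacle I anticipate is precisely the admission of non-convex quadrilaterals. In that case the diagonals need not meet inside $Q$, one interior angle is reflex, and the naive law-of-sines estimate at that vertex threatens to blow up. Both difficulties are neutralised by the two observations above: the diagonal area formula is an identity that ignores convexity, and the reflex triangle angle, being the explement of the reflex interior angle, is bounded below by the sum $\ge 3\phi$ of the remaining interior angles. A minor technical point to handle carefully is the interplay between chart (Euclidean) lengths of the edges and diagonals of the flat quadrilateral and the geodesic distances to the cone point $O$; since $O$ cannot lie in the interior of a flat quadrilateral, the straight edges and diagonals of $Q$ are realised as geodesics, so their chart lengths coincide with the corresponding geodesic distances and the triangle inequalities above are legitimate.
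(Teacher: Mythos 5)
Your proof is correct and follows essentially the same route as the paper's: the area identity $\mathrm{area}(Q)=\tfrac12 d_1d_2\sin\theta$ combined with $\phi$-regularity of the intersection angle, the law-of-sines bound $|e|\le |d_i|/\sin\phi$ with the obtuse-angle case handled by the longest-side observation, and the triangle inequality for the second estimate. The extra care you take with the non-convex case (shoelace identity for the area, explementary angle $\ge 3\phi$ at a reflex vertex) is a sound elaboration of what the paper states more tersely, and does not change the argument.
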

\begin{proof}
If $d_1$ and $d_2$ are the two diagonals of $Q$, and $\varphi_Q$ is their intersection angle, consider the triangle $\triangle$ formed by $e$ and one of the diagonals, say $d_i$. While $\triangle$ may lie outside $Q$, its interior angle opposite to $d_i$ is greater than or equal to an interior angle of $Q$ opposite to $d_i$. If this angle is greater than $\pi/2$, then $|d_i|\geq |e|$. Otherwise, we can apply the sine theorem and use $\phi$-regularity to deduce that:
\[\frac{|e|}{|d_i|}\leq \frac{1}{\sin(\phi)} \Rightarrow |e|^2 \leq \frac{1}{\sin^3(\phi)} |d_1| |d_2|\sin(\varphi_Q) = \frac{2}{\sin^3(\phi)}\text{area}(Q).\]

Let us assume that $|Oy|\geq |Ow|$. Using the above result for the diagonal $wy$ and triangle inequality, \[|Oz|-|Oy|\leq |zy| \leq \frac{|wy|}{\sin(\phi)}\leq \frac{|Ow|+|Oy|}{\sin(\phi)}\leq \frac{2}{\sin(\phi)}|Oy|. \qedhere\]
\end{proof}

\begin{lemma}\label{lem:gradient_inside}
Let $Q \subset D_O$ be a quadrilateral with vertices $w,x,y,z$. Then, \[\int_Q \left|\nabla_Q U_\Lambda\right|^2 \leq C^{(6)}_{U,O,\phi}\int_Q r^{2\gamma_O-2}rdrd\psi.\]
\end{lemma}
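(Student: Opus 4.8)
The plan is to use that $\nabla_Q U_\Lambda$ is constant on $Q$, so that $\int_Q|\nabla_Q U_\Lambda|^2 = |\nabla_Q U_\Lambda|^2\,\textnormal{area}(Q)$, and to reduce the assertion to the single estimate $|\nabla_Q U_\Lambda|^2\le C_1\,r_{\max}^{2\gamma_O-2}$, where $r_{\max}=|Oz|$ and $z$ is the vertex of maximal distance from $O$ as in Lemma~\ref{lem:geometric_bounds}. To obtain it I would bound $\nabla_Q U_\Lambda$ through its components along the two diagonals: by the definition of the discrete gradient the component along the black diagonal equals the difference quotient $|U(b_+)-U(b_-)|/|b_+-b_-|$, and similarly for the white diagonal. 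The two diagonal directions enclose the intersection angle $\varphi_Q\in[\phi,\pi-\phi]$, so the projection Lemma~\ref{lem:projection} gives $|\nabla_Q U_\Lambda|\le \frac{2}{\sin\phi}$ times the larger of these two quotients. Lemma~\ref{lem:difference_quotient} bounds each quotient by $C^{(5)}_{U,O}$ times $\max\{\,\cdot\,,\,\cdot\,\}^{\gamma_O-1}$ of the distances of the diagonal's endpoints from $O$; distinguishing the sign of $\gamma_O-1$ and invoking the farthest-vertex inequality $r_{\max}\le\frac{2+\sin\phi}{\sin\phi}\max\{|Ow|,|Oy|\}$ from Lemma~\ref{lem:geometric_bounds} replaces both by a uniform multiple of $r_{\max}^{\gamma_O-1}$, which yields the desired $|\nabla_Q U_\Lambda|^2\le C_1(\phi,\gamma_O,U)\,r_{\max}^{2\gamma_O-2}$.

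It then suffices to prove the purely geometric comparison $r_{\max}^{2\gamma_O-2}\,\textnormal{area}(Q)\le C_2(\phi,\gamma_O)\int_Q r^{2\gamma_O-2}\,dA$ with area element $dA=r\,dr\,d\psi$, after which the choice $C^{(6)}_{U,O,\phi}=C_1C_2$ finishes the proof. If $\gamma_O\le 1$, the exponent $2\gamma_O-2$ is nonpositive and $r\le r_{\max}$ on $Q$ gives $r^{2\gamma_O-2}\ge r_{\max}^{2\gamma_O-2}$ pointwise, so the comparison holds at once with $C_2=1$.

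The case $\gamma_O>1$ is the crux, since then $r^{2\gamma_O-2}$ is increasing and degenerates near $O$ (which may even be a vertex of $Q$), so a genuine lower bound on the integral is needed. I plan to show that a fixed fraction of $Q$ stays away from $O$, namely $\textnormal{area}(Q\cap\{r\ge c'r_{\max}/2\})\ge \frac{c'^2}{16}\,\textnormal{area}(Q)$ with $c'=\frac{\sin\phi}{2+\sin\phi}$; integrating $r^{2\gamma_O-2}\ge(c'r_{\max}/2)^{2\gamma_O-2}$ over this set then gives the comparison. Because every vertex lies within $r_{\max}$ of $O$ we have $\textnormal{diam}(Q)\le 2r_{\max}$; if moreover $r_{\max}\ge 2\,\textnormal{diam}(Q)$ then $r\ge r_{\max}/2$ on all of $Q$ and there is nothing to prove, so I may assume $\textnormal{diam}(Q)>r_{\max}/2$. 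The main device is a homothety applied to the interior-diagonal triangulation of $Q$ into triangles $\triangle_1,\triangle_2$: in each $\triangle_j$ the farthest vertex $z_j$ satisfies $|Oz_j|\ge c'r_{\max}$ --- trivially for a triangle containing $z$, and for the remaining triangle because its vertices include the endpoints of the diagonal not through $z$, to which Lemma~\ref{lem:geometric_bounds} applies. The map $p\mapsto z_j+s(p-z_j)$ with $s=c'r_{\max}/(2\,\textnormal{diam}(Q))$, which satisfies $c'/4\le s<1$, sends $\triangle_j$ into itself by convexity, maps its image into $B(z_j,c'r_{\max}/2)\subseteq\{r\ge c'r_{\max}/2\}$, and has area $s^2\,\textnormal{area}(\triangle_j)\ge \frac{c'^2}{16}\,\textnormal{area}(\triangle_j)$; since the two images have disjoint interiors, summing over $j$ yields the area bound. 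The subtle point I must watch is the possible non-convexity of $Q$: here I will use that the farthest vertex $z$ is a vertex of the convex hull of $Q$, hence never the reflex vertex (which lies in the interior of the triangle spanned by the other three), so that the interior-diagonal triangulation and the application of Lemma~\ref{lem:geometric_bounds} remain consistent. This geometric lower bound for $\gamma_O>1$ is the step I expect to demand the most care.
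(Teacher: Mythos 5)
Your proof is correct and follows the same overall strategy as the paper: bound $\left|\nabla_Q U_\Lambda\right|$ by the two difference quotients along the diagonals via the projection Lemma~\ref{lem:projection}, control those quotients by Lemma~\ref{lem:difference_quotient}, upgrade to a uniform multiple of $|Oz|^{\gamma_O-1}$ using the farthest-vertex inequality of Lemma~\ref{lem:geometric_bounds} (with the same case distinction on the sign of $\gamma_O-1$), and finally compare $|Oz|^{2\gamma_O-2}\,\mathrm{area}(Q)$ with $\int_Q r^{2\gamma_O-2}\,r\,dr\,d\psi$, the case $\gamma_O\le 1$ being the immediate pointwise bound. The one place you genuinely diverge is the geometric comparison for $\gamma_O>1$: the paper applies a single homothety of $Q$ centered at $z$ with fixed ratio $1/4$ and integrates over its image, whereas you triangulate $Q$ along its interior diagonal and apply a homothety with an adaptive ratio in each triangle, centered at that triangle's farthest vertex, after first disposing of the case $\mathrm{diam}(Q)\le r_{\max}/2$. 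Your variant is longer but more robust: the image of a single homothety of a non-convex $Q$ centered at $z$ need not be contained in $Q$ when the interior diagonal does not pass through $z$, while your per-triangle homotheties manifestly stay inside $Q$, and your observation that the triangle not containing $z$ still has a vertex at distance at least $\frac{\sin\phi}{2+\sin\phi}|Oz|$ from $O$ (again via Lemma~\ref{lem:geometric_bounds}) is exactly what makes that work. The resulting constant is of course different from, but comparable to, the paper's $2^{2\gamma_O+2}$.
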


\begin{proof}
Without loss of generality, we assume $|Oy|\geq |Ow|$, and that $z$ has the maximum distance to $O$. Lemma~\ref{lem:difference_quotient} provides bounds for the two discrete difference quotients of $U_\Lambda$ along the diagonals. By applying Lemma~\ref{lem:projection} together with Lemma~\ref{lem:geometric_bounds}, we obtain the following bound for the discrete gradient: \[\left|\nabla_Q U_\Lambda\right| \leq \frac{2C^{(5)}_{U,O}}{\sin(\phi)}\max\left\{|Oz|^{\gamma_O-1},|Oy|^{\gamma_O-1}\right\}\leq \frac{2C^{(5)}_{U,O}}{\sin(\phi)}\left(\frac{2+\sin(\phi)}{\sin(\phi)}\right)^{|1-\gamma_O|_0} |Oz|^{\gamma_O-1}.\]

If $\gamma_O \leq 1$, $|Oz|^{\gamma_O-1} \leq r^{\gamma_0-1}$ for all $p=(r,\psi) \in Q$. If $\gamma_O > 1$, we consider the homothety $Z$ of $Q$ with center $z$ and ratio $1/4$. Then, $|Oz| \leq 2r$ for all $p=(r,\psi) \in Z(Q)$, so \[|Oz|^{2\gamma_O-2}\text{area}(Q)=16|Oz|^{2\gamma_O-2}\text{area}(Z(Q))\leq 16 \int_{Z(Q)} (2r)^{2\gamma_O-2} rdrd\psi \leq 2^{2\gamma_O+2}\int_Q r^{2\gamma_O-2}rdrd\psi.\]

In particular,
\begin{align*}
\int_Q \left|\nabla_Q U_\Lambda\right|^2 &\leq \frac{4\left(C^{(5)}_{U,O}\right)^2}{\sin^2(\phi)} \left(\frac{2+\sin(\phi)}{\sin(\phi)}\right)^{2|1-\gamma_O|_0} |Oz|^{2\gamma_O-2} \text{area}(Q)\\
&\leq \frac{4\left(C^{(5)}_{U,O}\right)^2}{\sin^2(\phi)} \left(\frac{2+\sin(\phi)}{\sin(\phi)}\right)^{2|1-\gamma_O|_0} 2^{4+2|\gamma_O-1|_0} \int_Q r^{2\gamma_O-2}rdrd\psi. \qedhere
\end{align*}
\end{proof}

\begin{lemma}\label{lem:contribution_inside}
Let $T=T_{O,h}$ and $T_{(h)}=T_{O,h,(h)}$ be the sets of all quadrilaterals $Q \subset D_O$ of $\Lambda$ that intersect the disks of radius $h$ and $h^{1/\gamma_O}$ (if $\gamma_O<1/2$, otherwise of radius $h$) around $O$, respectively. Then, we have the following estimates:
\begin{align*}
\sum_{Q \in T} \int_Q \left|\nabla U-\nabla_Q U_\Lambda\right|^2 &\leq C^{(7)}_{U,O,\phi} h^{2\gamma_O}\\
\textnormal{and} \sum_{Q \in T_{(h)}} \int_Q \left|\nabla U-\nabla_Q U_\Lambda\right|^2 &\leq C^{(7,h)}_{U,O,\phi} h^{2} \quad \quad \textnormal{if $(\Sigma,\Lambda)$ is $h$-adapted and $\gamma_O \leq 1/2$.}
\end{align*}
\end{lemma}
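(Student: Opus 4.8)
The plan is to abandon, near $O$, the cancellation that Lemma~\ref{lem:gradient} provides, since the partial derivatives of the charts $g_p$ blow up as $p\to O$ and that lemma is no longer available. Instead I would estimate $\nabla U$ and $\nabla_Q U_\Lambda$ separately and accept that their difference is only of this same crude order. Concretely, from $|\nabla U-\nabla_Q U_\Lambda|^2\le 2|\nabla U|^2+2|\nabla_Q U_\Lambda|^2$ together with the pointwise bound $|\nabla U(p)|^2\le 2\|D^1U_p(g_p(p))\|^2\le 2(C^{(1)}_{U,O})^2 r^{2\gamma_O-2}$ from Lemma~\ref{lem:derivative_estimation} and the estimate of Lemma~\ref{lem:gradient_inside}, I obtain a single per-quadrilateral bound valid for every $Q\subset D_O$,
\[\int_Q \left|\nabla U-\nabla_Q U_\Lambda\right|^2\le C_{U,O,\phi}\int_Q r^{2\gamma_O-2}\,r\,dr\,d\psi,\]
the integral being taken with the flat cone area element.

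Next I would show that the quadrilaterals of $T$ together cover only a small disk about $O$. Each edge has length at most $h$, so each diagonal, being bounded by a sum of two edges, has length at most $2h$; hence $\mathrm{diam}(Q)\le 2h$, and since $Q$ meets the disk of radius $h$ about $O$, the geodesic triangle inequality forces $Q$ into the disk $D(O,3h)$ of radius $3h$. The quadrilaterals tile $\Sigma$ and so have disjoint interiors; summing the displayed estimate and enlarging the nonnegative integrand's domain to $D(O,3h)$ gives
\[\sum_{Q\in T}\int_Q \left|\nabla U-\nabla_Q U_\Lambda\right|^2\le C_{U,O,\phi}\,\frac{2\pi}{\gamma_O}\int_0^{3h} r^{2\gamma_O-1}\,dr=C^{(7)}_{U,O,\phi}\,h^{2\gamma_O},\]
where the radial integral converges at $0$ precisely because $2\gamma_O>0$. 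For $h$ not small one replaces $3h$ by $R_O$ and absorbs the change into the constant; it suffices to treat $h\to 0$.

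For the $h$-adapted case the same per-quadrilateral estimate holds verbatim, and only the covering radius changes. Writing $\rho:=h^{1/\gamma_O}$, the point is to prove $\bigcup_{Q\in T_{(h)}}Q\subset D(O,C\rho)$. I would first bound the nearest vertex of a quadrilateral meeting $D(O,\rho)$: if the closest point $q$ of $Q$ to $O$ lies in the interior of an edge $xy$ with $|Ox|\le|Oy|$, then $|Ox|\le|Oq|+|qx|\le\rho+|xy|$, and feeding in $|xy|\le(1+\tfrac{\pi}{2\gamma_O})h|Ox|^{1-\gamma_O}$ from Lemma~\ref{lem:adapted} together with $h=\rho^{\gamma_O}$ yields, after dividing by $|Ox|^{1-\gamma_O}$, the self-improving inequality $|Ox|\le C_1\rho$ (the case where the closest point is a vertex is immediate). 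Propagating this around the four edges, each of length $\lesssim\rho$ once its endpoints lie within $O(\rho)$ of $O$, shows every vertex and hence every point of $Q$ lies within $C\rho$ of $O$. The same radial integral then produces $(C\rho)^{2\gamma_O}=C^{2\gamma_O}h^{2}$, i.e.\ the claimed $C^{(7,h)}_{U,O,\phi}h^2$.

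The main obstacle is this covering estimate in the adapted case: in contrast to the trivial bound $\mathrm{diam}(Q)\le 2h$ of the general case, one must control the nearest-vertex distance through the implicit inequality coming from Lemma~\ref{lem:adapted}, and it is here that the exponent $1/\gamma_O$ in the adapted radius is exactly calibrated so that $\rho^{2\gamma_O}=h^2$. The remaining ingredients—the crude gradient split and the elementary radial integration—are routine.
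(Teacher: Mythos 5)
Your proposal is correct and follows essentially the same route as the paper: the crude split $|\nabla U-\nabla_Q U_\Lambda|^2\lesssim|\nabla U|^2+|\nabla_Q U_\Lambda|^2$, the bounds from Lemmas~\ref{lem:derivative_estimation} and~\ref{lem:gradient_inside}, containment of $T$ (resp.\ $T_{(h)}$) in a disk of radius $O(h)$ (resp.\ $O(h^{1/\gamma_O})$, which the paper obtains directly from Lemma~\ref{lem:adapted} with the explicit radius $(3+\pi/\gamma_O)h^{1/\gamma_O}$), and the radial integration of $r^{2\gamma_O-2}$ over the cone. Your self-improving inequality for the nearest vertex in the adapted case is a valid, slightly more detailed justification of that covering step, at the cost of a worse (but still admissible) constant.
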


\begin{proof}
We can see that for any $Q \in T$, it is contained within a disk of radius $3h$ around $O$. Hence, using Lemmas~\ref{lem:derivative_estimation} and~\ref{lem:gradient_inside}, we obtain the following bound:
\begin{align*}
\sum_{Q \in T} \int_Q \left|\nabla U-\nabla_Q U_\Lambda\right|^2 & \leq \sum_{Q \in T} \int_Q \left(\left|\nabla U\right|^2+\left|\nabla_Q U_\Lambda\right|^2\right) 
\leq \left(2 \left(C^{(1)}_{U,O}\right)^2+C^{(6)}_{U,O,\phi}\right)\int_0^{3h} \int_0^{2\pi/\gamma_O} r^{2\gamma_O-2}rdrd\psi\\
&=  3^{2\gamma_O}\pi\frac{2\left(C^{(1)}_{U,O}\right)^2+C^{(6)}_{U,O,\phi}}{\gamma_O^2} h^{2\gamma_O}.
\end{align*}

In the case of $h$-adaptedness, Lemma~\ref{lem:adapted} shows that any $Q \in T_{(h)}$ is contained within the disk of radius $\left(3+\frac{\pi}{\gamma_O}\right)h^{1/\gamma_O}$ around $O$. Therefore, we obtain a similar bound:
\begin{align*}
\sum_{Q \in T_{(h)}} \int_Q \left|\nabla U-\nabla_Q U_\Lambda\right|^2 & \leq \left(2 \left(C^{(1)}_{U,O}\right)^2+C^{(6)}_{U,O,\phi}\right)\int_0^{\left(3+\frac{\pi}{\gamma_O}\right)h^{1/\gamma_O}} \int_0^{2\pi/\gamma_O} r^{2\gamma_O-2}rdrd\psi\\
&=  \left(3+\frac{\pi}{\gamma_O}\right)^{2\gamma_O}\pi\frac{2 \left(C^{(1)}_{U,O}\right)^2+C^{(6)}_{U,O,\phi}}{\gamma_O^2} h^{2}.\qedhere
\end{align*}
\end{proof}

As observed in Lemma~\ref{lem:contribution_outside}, the convergence rate depends on the conical singularity of $\Sigma$ with the smallest singularity index.

\begin{definition}
Let $\gamma_{\Sigma}:=\min\{1,\min_{O \in S} \gamma_O\}$ denote the smallest and $\tilde{\gamma}_\Sigma:=\max\{1,\max_{O \in S} \gamma_O\}$ the largest singularity index among the conical singularities $S$ of $\Sigma$. If there is no conical singularity, then $\Sigma$ is a torus. For $h>0$, we denote
\[
\lambda_\Sigma(h):=
\begin{cases}
h & \text{ if } \gamma_\Sigma>1/2,\\
2h\left| \log h \right| & \text{ if } \gamma_\Sigma=1/2,\\
h^{2\gamma_\Sigma} & \text{ if } \gamma_\Sigma<1/2.
\end{cases}
\]
\end{definition}

Now, combining Lemmas~\ref{lem:contribution_outside} and \ref{lem:contribution_inside}, we can estimate the approximation error in the Second Lemma of Strang, Lemma~\ref{lem:strang}.

\begin{proposition}\label{prop:approximation_error}
If the maximum edge length $h$ of the $\phi$-regular discretization $(\Sigma,\Lambda)$ satisfies $h \leq C^{(8)}$ or $h \leq C^{(8,h)}$ in the $h$-adapted case, then
\begin{align*}
\left\|U-U_\Lambda\right\|^2&\leq C^{(9)}_{U,\phi}\lambda_\Sigma(h)\\
\textnormal{and}\quad \left\|U-U_\Lambda\right\|^2&\leq C^{(9,h)}_{U,\phi}h \quad \textnormal{if $(\Sigma,\Lambda)$ is $h$-adapted}.
\end{align*}
\end{proposition}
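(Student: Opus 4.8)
The plan is to observe that, since the form $a$ comes from an inner product of gradients and each discrete gradient $\nabla_Q U_\Lambda$ is constant on its face, the approximation error splits face by face:
\[
\left\|U-U_\Lambda\right\|^2 = \sum_{Q \in F(\Lambda)} \int_Q \left|\nabla U-\nabla_Q U_\Lambda\right|^2 .
\]
Everything then reduces to choosing a partition of $F(\Lambda)$ on which the per-face estimates of Lemmas~\ref{lem:gradient}, \ref{lem:contribution_outside}, and \ref{lem:contribution_inside} can be summed. First I would fix $h$ below a threshold $C^{(8)}$ (respectively $C^{(8,h)}$) small enough that the disks $D_O$, $O \in S$, are pairwise disjoint, that $h \le R_O$ for every $O$ as required by those lemmas, and that every face has diameter bounded by a constant multiple of $h$ (which follows from $\phi$-regularity, bounding diagonals in terms of edges). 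With disjoint disks, each face is contained in at most one $D_O$, so I can partition $F(\Lambda)$ into the singular classes $\{Q : Q \subset D_O\}$, one for each $O \in S$, together with the regular class of faces not contained in any $D_O$.

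For the regular faces I would note that each lies at distance at least $\tfrac12\min_O R_O$ from $S$: a regular face is either disjoint from every $D_O$ or straddles some boundary circle $\partial D_O$, and in the latter case its diameter $O(h)$ keeps it away from $O$. Hence all regular faces sit in a fixed compact subset of $\Sigma \setminus S$ on which $\|D^2 U\|$ is uniformly bounded in isometric charts, so Lemma~\ref{lem:gradient} gives $\int_Q|\nabla U-\nabla_Q U_\Lambda|^2 \le \mathrm{const}\cdot h^2 \,\mathrm{area}(Q)$, and summing over them contributes $O(h^2 \,\mathrm{area}(\Sigma)) = O(h^2)$. For each singular class I would split $\{Q \subset D_O\}$ into the faces meeting the disk of radius $h$ (resp.\ $h^{1/\gamma_O}$) about $O$ and those not meeting it; Lemma~\ref{lem:contribution_inside} bounds the first part by $C^{(7)}_{U,O,\phi}h^{2\gamma_O}$ and Lemma~\ref{lem:contribution_outside} bounds the second by $C^{(4)}_{U,O,\phi}\lambda_O(h)$.

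It remains to combine the rates. Since $h^{2\gamma_O}\le\lambda_O(h)$ for small $h$ in each of the three regimes, the contribution of a single singularity is $O(\lambda_O(h))$, and because $\gamma_\Sigma = \min\{1,\min_O\gamma_O\}$ is attained at the singularity of smallest index, one checks in the cases $\gamma_\Sigma>1/2$, $\gamma_\Sigma=1/2$, $\gamma_\Sigma<1/2$ that $\lambda_O(h) = O(\lambda_\Sigma(h))$ for every $O$. As $S$ is finite and $h^2 \le h \le \lambda_\Sigma(h)$ for $h\le 1$, the regular contribution is absorbed as well, and collecting the finitely many constants into a single $C^{(9)}_{U,\phi}$ yields $\left\|U-U_\Lambda\right\|^2 \le C^{(9)}_{U,\phi}\lambda_\Sigma(h)$. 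The $h$-adapted case runs identically: the $h$-adapted forms of Lemmas~\ref{lem:contribution_outside} and \ref{lem:contribution_inside} each give $O(h)$ near every singularity with $\gamma_O\le 1/2$, the regular faces again give $O(h^2)$, and the sum is $C^{(9,h)}_{U,\phi}h$.

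I expect the main difficulty to be purely organizational rather than analytic, since the individual estimates are already in hand: namely, setting up the partition so that every face is counted exactly once and that faces straddling the circles $\partial D_O$ are correctly assigned to the regular class while still admitting the bounded-Hessian estimate, and then verifying the elementary case analysis $\lambda_O(h)=O(\lambda_\Sigma(h))$ that lets the slowest singularity govern the global rate. Extracting the uniform bound on $\|D^2 U\|$ over the singularity-free compact region from the smoothness of $U$ on $\Sigma\setminus S$ is the only genuinely analytic input, and it is routine.
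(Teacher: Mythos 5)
Your proof is correct and, on the singular part, coincides with the paper's: both arguments reduce the bound to summing Lemmas~\ref{lem:contribution_inside} and~\ref{lem:contribution_outside} over the conical singularities and then performing the case analysis showing $h^{2\gamma_O}$ and $\lambda_O(h)$ are $O(\lambda_\Sigma(h))$, with the slowest singularity $\gamma_\Sigma$ governing the rate. Where you genuinely differ is the regular region. The paper never isolates a ``regular class'': it covers all of $\Sigma$ by finitely many disks $D_O$ centred at points $O$ that may be regular as well as singular, invokes the Lebesgue number lemma to place every face inside some disk of the cover once $h\le\delta/2$, and then applies the same two contribution lemmas uniformly, with $\gamma_O=1$ at the regular centres yielding rate $h$ there. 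You instead observe that faces not contained in any singular disk stay a fixed distance from $S$, so $\left\|D^2U\right\|$ is uniformly bounded in isometric charts and Lemma~\ref{lem:gradient} applies directly, giving $O(h^2)$. Both mechanisms rest on the same analytic input (boundedness of second derivatives of $U$ on compact sets away from $S$, which the paper packages into Lemma~\ref{lem:derivative_estimation} with $\gamma_O=1$); yours even yields the slightly better rate $h^2$ for the regular contribution, which is immaterial since the singular terms dominate. One small slip: you propose to take $h$ small enough that the disks $D_O$, $O\in S$, become pairwise disjoint, but these disks are fixed independently of $h$, so disjointness either holds or it does not. This is harmless --- if two singular disks overlap, assign any face contained in both to one of them arbitrarily --- but the condition as stated cannot be arranged by shrinking $h$.
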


\begin{proof}
We start by considering a finite cover of $\Sigma$ with open disks $D_O$ of radius $R_O$, $O \in \Sigma$, containing no singularities other than possibly $O$. Let $S'$ denote the set of all $O$ for which $D_O$ is in the above open cover and assume that $S'$ is minimal, i.e., there are no superfluous disks in the cover. Let $\delta$ denote the Lebesgue number of the cover and let \[c:=\min\left\{\frac{\delta}{2},\left\{\frac{1}{R_O}\right\}_{O \in S'}\right\}.\]

Next, we define appropriate constants for the different cases of $\gamma_\Sigma$:
\begin{itemize}
\item If $\gamma_\Sigma>1/2$, let $C^{(8)}:=\min\{1,c\}$.
\item If $\gamma_\Sigma=1/2$, let $C^{(8)}:=\min\{e^{-1/2},c\}$, where $e$ is Euler's number.
\item If $\gamma_\Sigma<1/2$, we define $K(\Sigma)=1$ if no singularity $O$ has index $\gamma_O=1/2$, and otherwise, we choose $K(\Sigma)>0$ such that $2 \log(t) \leq t^{1-2\gamma_\Sigma}$ for all $t\geq K(\Sigma)$. Then, $C^{(8)}:=\min\{K(\Sigma)^{-1},c\}$.
\end{itemize}

Clearly, $h \leq R_O$ for all $O \in S'$. For any quadrilateral $Q \in F(\Lambda)$, there is $O \in S'$ such that $Q \subset D_O$ by Lebesgue's number lemma. So Lemmas~\ref{lem:contribution_outside} and~\ref{lem:contribution_inside} yield the desired estimate:
\begin{align*}
\left\|U-U_\Lambda\right\|^2&=\sum_{Q \in F(\Lambda)} \int_Q \left|\nabla U-\nabla_Q U_\Lambda\right|^2\\
&\leq \sum_{O \in S'} \left(\sum_{Q \in T'_{O,h}} \int_Q \left|\nabla U-\nabla_Q U_\Lambda\right|^2 + \sum_{Q \in T_{O,h}} \int_Q \left|\nabla U-\nabla_Q U_\Lambda\right|^2\right)\\
& \leq \sum_{O \in S'} \left( C^{(4)}_{U,O,\phi} \lambda_O(h)+ C^{(7)}_{U,O,\phi} h^{2\gamma_O}\right)\\
& \leq \sum_{O \in S'} \left( C^{(4)}_{U,O,\phi} + C^{(7)}_{U,O,\phi}\right)\cdot \begin{cases}
h & \text{ if } \gamma_\Sigma>1/2,\\
2 h\left| \log h \right| & \text{ if } \gamma_\Sigma=1/2,\\
h^{2\gamma_\Sigma} & \text{ if } \gamma_\Sigma<1/2.
\end{cases}
\end{align*}
To arrive at the last line, we used the following: Since $0 < h \leq 1$, $h^{x}$ is a decreasing function in $x$. If $h \leq e^{-1/2}$, then $h^x \leq h \leq 2h|\log h|$ for all $x \geq 1$. Due to $h\leq R_O^{\pm 1}$, $h|\log hR_O^{-1}| \leq h|\log h|+h|\log R_O|\leq 2 h|\log h|$. By construction, if $\gamma_\Sigma<1/2$ and $\gamma_O =1/2$, $\lambda_O(h) \leq 2 h|\log h| \leq h^{2\gamma_\Sigma}$ if $h \leq C^{(8)}$. 

In the $h$-adapted case, the proof follows similarly, with $C^{(8,h)}:=\min\{1,c\}$.
\end{proof}

\begin{remark}
$\phi$-regularity around conical singularities is essential for the convergence of the Dirichlet energies in Proposition~\ref{prop:approximation_error} if there is a conical singularity with $\gamma_O<1/2$. We illustrate this with an example adapted from Bobenko and Skopenkov's work on triangular decompositions \cite{BoSk12} to our setting, where we need to bound the interior angles of quadrilaterals as well as the intersection angles of diagonals.

Let $\gamma_O<1/2$ and $k>1/(1-2\gamma_O)$. Consider the harmonic function $U(r,\psi):=r^{\gamma_O}\cos(\gamma_O\psi)$ in $D_O$. It has bounded Dirichlet energy in this domain.

First, let $Q$ be the rhombus with black vertices $O$ and $(h^k,0)$ such that the white diagonal has length $h$. The edges of $Q$ have length $\sqrt{h^2+h^{2k}}/2<\sqrt{2}h/2<h$ if $h<1$. $U$ takes identical values on the two white vertices. Thus, $\nabla_Q U_{\Lambda}=h^{k(\gamma_0-1)}$. The area of $Q$ is $h^{k+1}/2$. Hence, $2\int_Q \left|\nabla_Q U_{\Lambda}\right|^2=h^{2k(\gamma_0-1)+k+1}$. Since $2k(\gamma_0-1)+k+1=1-k(1-2\gamma_O)<0$, the discrete Dirichlet energy of $U_{\Lambda}$ diverges to infinity as $h\to 0$. Note that the interior angle of $Q$ at $O$ goes to zero as $h \to 0$.

Second, let $Q$ be the rectangle with one black vertex $b_-$ at $O$ and its two white vertices $w_-$ and $w_+$ being at $(h^k,0)$ and $(h,\pi/2)$, respectively. The edges of $Q$ have length $h$ and $h^k<h$ if $h<1$. As $h \to 0$, the intersection angle of the black and the white diagonal approaches zero. We can bound the length of $\nabla_Q U_\Lambda$ from below by considering the discrete difference quotient of $U_\Lambda$ between $(b_-+w_+)/2$ and $(b_++w_-)/2$. This leads to the inequality
\begin{align*}
|\nabla_Q U_\Lambda| &\geq \frac{\left(\sqrt{h^2+h^{2k}}\right)^{\gamma_O}\cos(\gamma_O\angle w_-b_-b_+)+h^{k\gamma_O}-h^{\gamma_O}\cos(\gamma_O\pi/2)}{2h^k}\\
&> \frac{h^{\gamma_O-k}}{2}\left(\left(1+h^{2(k-1)}\right)^{\gamma_O/2} \cos(\pi/4)+h^{(k-1)\gamma_O} -\cos(\pi/4)\right)\\
&> \frac{h^{\gamma_O-k}}{2\sqrt{2}}\left(\left(1+h^{2(k-1)}\right)^{\gamma_O/2} -1 +h^{(k-1)\gamma_O}\right) > \frac{h^{k(\gamma_O-1)}}{2\sqrt{2}}.
\end{align*}
The area of the rectangle is $h^{1+k}$, which leads to $8\int_Q |\nabla_Q U_\Lambda|^2 > h^{2k(\gamma_O-1)+k+1} \to \infty$ as $h \to 0$.
\end{remark}

%%%%%%%%%%%%%%%%%%%%%%%%%%%%

\subsection{The consistency error}\label{sec:consistency}

Now let us bound the consistency error in Lemma~\ref{lem:strang}. We want to estimate $\left|a(U,f)\right|$ for an arbitrary function $f:V(\Lambda)\to\mathds{R}$.

The function $f$ induces a corresponding function $f:V(X)\to\mathds{R}$ on the vertices of the medial graph by taking the arithmetic mean: If $e=bw$ is an edge of $\Lambda$ and we identify $e$ with its midpoint, then $f(e):=(f(b)+f(w))/2$. On the Varignon parallelogram $F_Q \subset \Sigma$ associated with a quadrilateral $Q \in F(\Lambda)$, we can linearly interpolate $f$. We extend this interpolation to the entire $Q \subset \Sigma$. Doing this for all quadrilaterals $Q \in F(\Lambda)$ gives us a well-defined function in $L^2(\Sigma)$, which we also denote as $f$. By construction, we can choose a representative of $f$ that is continuous at midpoints of edges (i.e., at vertices of $X$), where it agrees with the original function $f:V(X)\to\mathds{R}$. Similarly, $U_\Lambda \in L^2(\Sigma)$ is defined as a piecewise linear function that is continuous along midpoints of edges. On a given quadrilateral $Q$, we always consider $f$ and $U_\Lambda$ to be continuous also at the boundary. This means, in particular, that we consider generally different representatives of $f,U_\Lambda \in L^2(\Sigma)$ on the single quadrilaterals.

Functions $f \in L^2(\Sigma)$ are analogous to the \textit{Crouzeix-Raviart elements} used for quadrilaterals. The Crouzeix-Raviart elements are introduced by Braess in \cite{Br07} as the simplest nonconforming elements for discretizing the Poisson problem. The term \textit{nonconforming} indicates that the discrete functions are not continuous, unlike the \textit{conforming} elements, which are continuous and piecewise linear. The conforming elements correspond to the linear interpolations of $f$ and $u$ on triangles, as considered by Bobenko and Skopenkov in \cite{BoSk12}.

To bound $\left|a(U,f)\right|$, we adopt the ideas from Braess \cite{Br07} for the Crouzeix-Raviart elements on a planar triangulation and adapt them to our setting. Let $T_{O,h}$ be the set of all quadrilaterals $Q \subset D_O$ of $\Lambda$ that intersect the disk of radius $h$ around the conical singularity $O$. We define $\tilde{T}$ as the complement of $\bigcup_{O \in S} T_{O,h}$ in $F(Q)$.

Though $\nabla U$ and $\nabla_Q f$ depend on the chosen chart, their scalar product and lengths are chart-independent, as any two (discrete) charts differ by an isometry. Therefore, we can work with these expressions without specifying the chart.

To estimate the consistency error in Lemma~\ref{lem:strang}, we use Green's first identity, denoting by $n$ the outer unit normal vector on the edges of the quadrilateral $Q$:
\[
\int_Q \nabla U \cdot \nabla_Q f=\int_{\partial Q} (\partial_n U) f-\int_Q \triangle u f=\sum_{e \in \partial Q} \int_e f\partial_n u
\]
since $U$ is harmonic. The normal derivative $\partial_n U$ is chart-independent and well-defined on the edges of a quadrilateral $Q$. However, it changes sign on the same edge when considering the other quadrilateral $Q'$ sharing $e$. Thus, the sum of the expressions $\int_e f(e)\partial_n U$ corresponding to the two quadrilaterals sharing the edge $e$ cancels out. Hence,
\begin{align}
a(U,f)&=\sum\limits_{Q \in F(\Lambda)} \int_Q \nabla U \cdot \nabla_Q f \notag \\
&=\sum\limits_{O \in S}\sum\limits_{Q \in T_{O,h}} \int_Q \nabla U \cdot \nabla_Q f - \sum\limits_{O \in S}\sum\limits_{Q \in T_{O,h}} \sum_{e \in \partial Q} \int_e f(e)\partial_n U + \sum\limits_{Q \in \tilde{T}} \sum_{e \in \partial Q}\int_e (f-f(e))\partial_n U . \tag{$\star$} \label{eq:consistency}
\end{align}

Now we focus on the last term. Since $f$ is linear on $Q$, it is also linear on the edge $e \subset Q$, and thus $\int_e (f-f(e))=0$ because it is zero at the midpoint of $e$. We can subtract a constant from $\partial_n U$ for each oriented edge without changing the sum's value. We choose to subtract $\partial_n U_\Lambda$.
\begin{align*}
\left|\sum\limits_{Q \in \tilde{T}} \sum_{e \in \partial Q} (f-f(e))\partial_n U\right|&\leq\sum\limits_{Q \in \tilde{T}} \sum_{e \in \partial Q} \int_{e} \left|\left(f-f(e)\right)\partial_n (U-U_\Lambda)\right|\\
&\leq \sum\limits_{Q \in \tilde{T}} \sum_{e \in \partial Q} \sqrt{\int_{e} \left(f-f(e)\right)^2} \sqrt{\int_{e} \left(\partial_n (U-U_\Lambda)\right)^2}\\
&\leq \sum\limits_{Q \in \tilde{T}} \sum_{e \in \partial Q} \sqrt{2\int_{0}^{|e|/2} \left|\nabla_Q f \cdot \frac{e}{|e|}\right|^2 s^2ds} \sqrt{\int_{e} \left|\nabla (U-U_\Lambda)\right|^2}\\
&\leq \sum\limits_{Q \in \tilde{T}} \sum_{e \in \partial Q} \sqrt{\frac{\left|\nabla_Q f\right|^2 |e|^2}{12}} \sqrt{|e|\int_{e} \left|\nabla (U-U_\Lambda)\right|^2}\\
&\leq \sqrt{\sum\limits_{Q \in \tilde{T}} \sum_{e \in \partial Q} \frac{\left|\nabla_Q f\right|^2 |e|^2}{12}} \sqrt{\sum\limits_{Q \in \tilde{T}} \sum_{e \in \partial Q}|e|\int_{e} \left|\nabla (U-U_\Lambda)\right|^2}\\
&\leq \sqrt{\sum\limits_{Q \in \tilde{T}} \frac{2 \left|\nabla_Q f\right|^2 \text{area}(Q)}{3\sin^3{\phi}}} \sqrt{\sum\limits_{Q \in \tilde{T}} \sum_{e \in \partial Q}|e|\int_{e}\left| \nabla (U-U_\Lambda)\right|^2}\\
&= \sqrt{\frac{2}{3\sin^3{\phi}}}\left\|f\right\|_{\tilde{T}} \sqrt{\sum_{Q \in \tilde{T}} \sum_{e \in \partial Q}|e|\int_{e}\left| \nabla (U-U_\Lambda)\right|^2}
\end{align*}
by Cauchy-Schwarz inequality and the geometric bound in Lemma~\ref{lem:geometric_bounds}. Here, $|e|$ is the length of edge $e$, and $\left\|f\right\|_{\tilde{T}}^2:=\sum_{Q \in \tilde{T}} \left|\nabla_Q f\right|^2 \text{area}(Q)$. In the case of $h$-adapted discrete Riemann surfaces, replace the sets $T_{O,h}$ by the sets $T_{O,h,(h)}$ corresponding to the radius $h^{1/\gamma_O}$ if $\gamma_O < 1/2$.

In the next step, we proceed to bound the consistency error, following a similar approach as in the previous Section~\ref{sec:approximation}. In the following, we focus on a conical singularity $O \in \Sigma$ and the corresponding chart $(D_O,g_O)$, where $D_O$ is a disk with radius $R_O$. We start by establishing the analogue of Lemmas~\ref{lem:gradient_outside}.

\begin{lemma}\label{lem:gradient_outside2}
For a quadrilateral $Q \subset D_O$ that does not intersect the disk of radius $h$ around $O$, we have the following bound: \[\sum_{e \in \partial Q}|e|\int_{e} \left|\nabla U-\nabla_Q U_\Lambda\right|^2 \leq \frac{8 C^{(3)}_{U,O,\phi}}{\sin^3{\phi}}h\int_Q r^{2\gamma_O-2}drd\psi.\]
\end{lemma}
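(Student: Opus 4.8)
The plan is to reduce the weighted boundary integral to the area integral that is already controlled in Lemma~\ref{lem:gradient_outside}, losing only a constant factor that comes from the geometry of $Q$. The essential observation is that $\nabla_Q U_\Lambda$ is a constant vector on $Q$, so the integrand $\left|\nabla U - \nabla_Q U_\Lambda\right|^2$ is a continuous function on the closure of $Q$ whose value along any edge $e \in \partial Q$ is bounded by its maximum over $Q$.

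First I would estimate each edge contribution crudely by its length times the pointwise maximum:
\[
\int_e \left|\nabla U-\nabla_Q U_\Lambda\right|^2 \leq |e| \max_{z \in Q}\left|\nabla U(z)-\nabla_Q U_\Lambda\right|^2,
\]
so that $|e|\int_e \left|\nabla U-\nabla_Q U_\Lambda\right|^2 \leq |e|^2 \max_{z \in Q}\left|\nabla U(z)-\nabla_Q U_\Lambda\right|^2$. Next I would apply the geometric edge-length bound $|e|^2 \leq \frac{2}{\sin^3(\phi)}\textnormal{area}(Q)$ from Lemma~\ref{lem:geometric_bounds}, which holds for each of the four edges of $Q$. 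Summing over $\partial Q$ then yields
\[
\sum_{e \in \partial Q}|e|\int_{e} \left|\nabla U-\nabla_Q U_\Lambda\right|^2 \leq \frac{8}{\sin^3(\phi)}\,\textnormal{area}(Q)\max_{z \in Q}\left|\nabla U(z)-\nabla_Q U_\Lambda\right|^2.
\]

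Finally I would observe that the quantity $\textnormal{area}(Q)\max_{z \in Q}\left|\nabla U(z)-\nabla_Q U_\Lambda\right|^2$ is exactly the expression estimated in the proof of Lemma~\ref{lem:gradient_outside}: that proof begins with $\int_Q \left|\nabla U-\nabla_Q U_\Lambda\right|^2 \leq \textnormal{area}(Q)\max_{z\in Q}\left|\nabla U-\nabla_Q U_\Lambda\right|^2$ and bounds the right-hand side, via Lemmas~\ref{lem:gradient} and~\ref{lem:derivative_estimation}, by $C^{(3)}_{U,O,\phi}h\int_Q r^{2\gamma_O-2}drd\psi$. Substituting this into the previous display produces the asserted bound with constant $8C^{(3)}_{U,O,\phi}/\sin^3(\phi)$, and the $h$-adapted variant follows identically using the second inequality of Lemma~\ref{lem:gradient_outside}.

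No serious obstacle arises here; this is a boundary-trace refinement of the earlier area estimate. The only point requiring care is the passage from edge integrals to the area integral, which is precisely what the edge-length bound of Lemma~\ref{lem:geometric_bounds} supplies, together with the fact that $\phi$-regularity keeps $\sin(\phi)$ uniformly bounded away from zero so that the four edges of a possibly non-convex quadrilateral all have length comparable to $\sqrt{\textnormal{area}(Q)}$.
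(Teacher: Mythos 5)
Your argument is correct and coincides step for step with the paper's own proof: bound each edge integral by $|e|$ times the pointwise maximum, apply the edge-length bound $|e|^2 \leq \frac{2}{\sin^3(\phi)}\textnormal{area}(Q)$ from Lemma~\ref{lem:geometric_bounds}, sum over the four edges, and recycle the estimate of $\textnormal{area}(Q)\max_{z\in Q}|\nabla U-\nabla_Q U_\Lambda|^2$ already established in the proof of Lemma~\ref{lem:gradient_outside}. Nothing to add.
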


\begin{proof}
We make use of the geometric bound from Lemma~\ref{lem:geometric_bounds}. Specifically, we have: \[\sum_{e \in \partial Q}|e|\int_{e}\left|\nabla U-\nabla_Q U_\Lambda\right|^2\leq \sum_{e \in \partial Q}|e|^2\max_{w \in Q}\left| \nabla U(w)-\nabla_Q U_\Lambda\right|^2\leq \frac{8}{\sin^3{\phi}}\text{area}(Q)\max_{w \in Q} \left| \nabla U(w)-\nabla_Q U_\Lambda\right|^2.\] Up to the constant factor, this expression is precisely the same one we bounded in Lemma~\ref{lem:gradient_outside}.
\end{proof}

Using Lemma~\ref{lem:gradient_outside2}, we prove the following lemma in exactly the same way as Lemma~\ref{lem:contribution_outside}.

\begin{lemma}\label{lem:contribution_outside2}
Let $T'=T'_{O,h}$ and $T'_{(h)}=T'_{O,h,(h)}$ be the sets of all quadrilaterals $Q \subset D_O$ of $\Lambda$ that do not intersect the disks of radius $h$ and $h^{1/\gamma_O}$ (if $\gamma_O<1/2$, otherwise of radius $h$) around $O$, respectively. Additionally, we assume $h \leq R_O$. Then,
\begin{align*}
\sum\limits_{Q \in T'} \sum_{e \in \partial Q}|e|\int_{e}\left|\nabla u-\nabla_Q u_\Lambda\right|^2 &\leq \frac{8  C^{(4)}_{U,O,\phi}}{\sin^3{\phi}}\lambda_O(h)\\
\textnormal{and} \quad  \sum\limits_{Q \in T'_{(h)}} \sum_{e \in \partial Q}|e|\int_{e}\left|\nabla u-\nabla_Q u_\Lambda\right|^2 &\leq \frac{8  C^{(4,h)}_{U,O,\phi}}{\sin^3{\phi}}h \quad \textnormal{if $(\Sigma,\Lambda)$ is $h$-adapted and $\gamma_O \leq 1/2$.}
\end{align*}
\end{lemma}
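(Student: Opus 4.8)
The plan is to follow the proof of Lemma~\ref{lem:contribution_outside} line for line, the only change being that the per-quadrilateral gradient estimate of Lemma~\ref{lem:gradient_outside} is replaced by its edge-weighted analogue from Lemma~\ref{lem:gradient_outside2}. The point is that the left-hand side of Lemma~\ref{lem:gradient_outside2} is precisely the summand $\sum_{e \in \partial Q}|e|\int_e |\nabla U-\nabla_Q U_\Lambda|^2$ appearing in the statement, so applying that lemma to each individual $Q \in T'$ already produces a bound of the same shape as in Lemma~\ref{lem:gradient_outside}, differing only by the universal factor $8/\sin^3\phi$.

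First I would sum the estimate of Lemma~\ref{lem:gradient_outside2} over all $Q \in T'$ and enlarge the integration domain to the full annulus $\{h \leq r \leq R_O\}$ in the chart $(D_O, g_O)$:
\[
\sum_{Q \in T'} \sum_{e \in \partial Q}|e|\int_e \left|\nabla U-\nabla_Q U_\Lambda\right|^2 \leq \frac{8 C^{(3)}_{U,O,\phi}}{\sin^3\phi}\,h \int_h^{R_O}\int_0^{2\pi/\gamma_O} r^{2\gamma_O-2}\,dr\,d\psi = \frac{16\pi C^{(3)}_{U,O,\phi}}{\gamma_O\sin^3\phi}\,h\int_h^{R_O} r^{2\gamma_O-2}\,dr.
\]
The remaining radial integral $\int_h^{R_O} r^{2\gamma_O-2}\,dr$ is exactly the one evaluated in the proof of Lemma~\ref{lem:contribution_outside}; its three regimes $\gamma_O > 1/2$, $\gamma_O = 1/2$, $\gamma_O < 1/2$ reproduce the profile $\lambda_O(h)$ up to the $O$-dependent constants already collected into $C^{(4)}_{U,O,\phi}$. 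Carrying the prefactor $8/\sin^3\phi$ through this bookkeeping yields the asserted estimate $\frac{8 C^{(4)}_{U,O,\phi}}{\sin^3\phi}\lambda_O(h)$. For the $h$-adapted case one instead starts from the adapted branch of Lemma~\ref{lem:gradient_outside2} (with integrand $r^{\gamma_O-1}$ and constant $C^{(3,h)}_{U,O,\phi}$), pushes the lower integration limit up to $h^{1/\gamma_O}$, and uses $\int_{h^{1/\gamma_O}}^{R_O} r^{\gamma_O-1}\,dr \leq R_O^{\gamma_O}/\gamma_O$ to obtain a bound linear in $h$, exactly as in the second display of the proof of Lemma~\ref{lem:contribution_outside}.

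There is essentially no genuine obstacle, which is why the result can be claimed to hold ``in exactly the same way'' as Lemma~\ref{lem:contribution_outside}. The only step requiring a moment's care is the domain enlargement: one must note that the quadrilaterals in $T'$ have pairwise disjoint interiors, lie inside $D_O$, and avoid the disk of radius $h$ (respectively $h^{1/\gamma_O}$) about $O$, so that their images under $g_O$ tile a subregion of the annulus. Since the radial integrand $r^{2\gamma_O-2}$ (respectively $r^{\gamma_O-1}$) is nonnegative, the sum of the per-quadrilateral integrals is then dominated by the single integral over the entire annulus, legitimizing the first inequality above.
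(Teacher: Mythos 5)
Your proposal matches the paper's intended argument exactly: the paper proves this lemma by the one-line remark that it follows from Lemma~\ref{lem:gradient_outside2} ``in exactly the same way as Lemma~\ref{lem:contribution_outside}'', which is precisely the substitution-and-resummation you carry out, including the factor $8/\sin^3\phi$ and the domain enlargement to the annulus. No issues.
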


We can then derive the following corollary, which is shown in a similar way as Proposition~\ref{prop:approximation_error}.

\begin{corollary}\label{cor:contribution_outside3}
\begin{align*}
\sum_{Q \in \tilde{T}} \sum_{e \in \partial Q}|e|\int_{e}\left| \nabla (U-U_\Lambda)\right|^2 &\leq \frac{8}{\sin^3{\phi}}\lambda_{\Sigma}(h)\left(\sum\limits_{O \in S}C^{(4)}_{U,O,\phi}\right) \quad \textnormal{if $h \leq C^{(8)}$}\\
\textnormal{and} \quad \sum_{Q \in \tilde{T}} \sum_{e \in \partial Q}|e|\int_{e}\left| \nabla (U-U_\Lambda)\right|^2 &\leq \frac{8}{\sin^3{\phi}}h\left(\sum\limits_{O \in S}C^{(4,h)}_{U,O,\phi}\right) \quad \textnormal{if $(\Sigma,\Lambda)$ is $h$-adapted and $h \leq C^{(8,h)}.$}
\end{align*}
Here, we define $C^{(4,h)}_{U,O,\phi}:=C^{(4)}_{U,O,\phi}$ if $\gamma_O \geq 1/2$.
\end{corollary}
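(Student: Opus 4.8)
The plan is to transcribe the proof of Proposition~\ref{prop:approximation_error} almost verbatim, carrying the weighted boundary sum $\sum_{e \in \partial Q}|e|\int_e(\cdots)$ in place of the bulk integral $\int_Q(\cdots)$. Concretely, I would reuse the very objects fixed there: the finite open cover $\{D_O\}_{O \in S'}$ of $\Sigma$ (with $S'$ minimal and possibly containing regular points as well as singularities), its Lebesgue number, and the threshold constants $C^{(8)}$ and $C^{(8,h)}$, together with the case analysis that compares $\lambda_O(h)$ with $\lambda_\Sigma(h)$. The only genuinely new input relative to the approximation-error estimate is Lemma~\ref{lem:contribution_outside2}, which plays exactly the role that Lemma~\ref{lem:contribution_outside} played in Proposition~\ref{prop:approximation_error}.

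First I would invoke Lebesgue's number lemma to assign to each $Q \in \tilde{T}$ a single index $O \in S'$ with $Q \subset D_O$; making this assignment single-valued is what prevents double counting, since one quadrilateral may lie in several disks of the cover. For an assigned $O$ that is a conical singularity, the defining property of $\tilde{T}$ — that $Q$ meets no disk of radius $h$ (resp. $h^{1/\gamma_O}$ in the $h$-adapted case with $\gamma_O < 1/2$) around a singularity — guarantees that $Q \in T'_{O,h}$ (resp. $Q \in T'_{O,h,(h)}$). For an assigned $O$ that is a regular point, $D_O$ contains no singularity, so $\gamma_O = 1$ and the bound of Lemma~\ref{lem:gradient_outside2}, hence of Lemma~\ref{lem:contribution_outside2}, holds for every such $Q$ with $\lambda_O(h) = h$; these contribute no worse than the singular terms.

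Then I would apply Lemma~\ref{lem:contribution_outside2} once per $O \in S'$ to obtain $\sum_{Q \mapsto O} \sum_{e \in \partial Q}|e|\int_e \left|\nabla(U - U_\Lambda)\right|^2 \leq \frac{8\,C^{(4)}_{U,O,\phi}}{\sin^3\phi}\,\lambda_O(h)$, and sum over $O \in S'$. The concluding step replaces each $\lambda_O(h)$ by $\lambda_\Sigma(h)$: for $h \leq C^{(8)}$ one has $\lambda_O(h) \leq \lambda_\Sigma(h)$ for every $O$, by precisely the monotonicity-and-logarithm argument given at the end of Proposition~\ref{prop:approximation_error} (this is exactly why $C^{(8)}$ was defined through $K(\Sigma)$ in the borderline case $\gamma_O = 1/2$). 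Factoring out $\lambda_\Sigma(h)/\sin^3\phi$ leaves the constant $\sum_{O \in S'} C^{(4)}_{U,O,\phi}$ and yields the first claimed bound. In the $h$-adapted case I would instead invoke the second estimate of Lemma~\ref{lem:contribution_outside2} for the singularities with $\gamma_O < 1/2$ and set $C^{(4,h)}_{U,O,\phi} := C^{(4)}_{U,O,\phi}$ for $\gamma_O \geq 1/2$, so that every summand is a multiple of $h$. Since the argument is a routine repackaging, I expect no real obstacle; the only point demanding care is the bookkeeping that simultaneously keeps the assignment $Q \mapsto O$ single-valued and places each assigned $Q$ into the ``outside'' set of its chosen chart — which is immediate for singular $O$ from the definition of $\tilde{T}$ and vacuous for regular $O$.
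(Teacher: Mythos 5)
Your proposal matches the paper's (deliberately terse) proof: the corollary is obtained by rerunning the covering and case analysis of Proposition~\ref{prop:approximation_error} with Lemma~\ref{lem:contribution_outside2} substituted for Lemma~\ref{lem:contribution_outside}, exactly as you describe. The only cosmetic imprecision is your claim that Lemma~\ref{lem:contribution_outside2} applies verbatim to every quadrilateral assigned to a regular cover point $O$, including those meeting the disk of radius $h$ around $O$ --- strictly those few quadrilaterals should be handled as in the $T_{O,h}$ term of the Proposition, where $\gamma_O=1$ and boundedness of $D^2U$ near a regular point give an $O(h^2)$ contribution, but this does not affect the argument.
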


Now, let us focus on the contribution of $Q \in T_{O,h}$, where $O \in S$. We define $\left\|f\right\|_{T_{O,h}}^2:=\sum_{Q \in T_{O,h}} \int_Q \left|\nabla_Q f\right|^2$ and $\left\|f\right\|_{S}^2:=\sum_{O \in S} \left\|f\right\|_{T_{O,h}}^2$.

\begin{lemma}\label{lem:contribution_inside2}
\begin{align*}
\left|\sum\limits_{O \in S}\sum\limits_{Q \in T_{O,h}} \int_Q \nabla U \cdot \nabla_Q f\right|&\leq C^{(10)}_{U,\phi}\sqrt{\lambda_\Sigma(h)}\|f\|_{S} \quad \textnormal{if $h \leq C^{(8)}$}\\
\textnormal{and} \quad \left|\sum\limits_{O \in S}\sum\limits_{Q \in T_{O,h,(h)}} \int_Q \nabla U \cdot \nabla_Q f\right|&\leq  C^{(10,h)}_{U,\phi} h\|f\|_{S}\quad \textnormal {if $(\Sigma,\Lambda)$ is $h$-adapted and $h \leq C^{(8,h)}$.}
\end{align*}
\end{lemma}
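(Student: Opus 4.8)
The plan is to apply the Cauchy--Schwarz inequality twice in order to factor the sum into a part depending only on $U$, which is controlled by the derivative estimate near the singularity, and a part that reassembles into $\|f\|_S$. First I would observe that on each $Q$ the discrete gradient $\nabla_Q f$ is constant, so that the pointwise Cauchy--Schwarz inequality followed by integration over $Q$ gives
\[\left|\int_Q \nabla U \cdot \nabla_Q f\right| \leq \left(\int_Q \left|\nabla U\right|^2\right)^{1/2}\left(\int_Q \left|\nabla_Q f\right|^2\right)^{1/2}.\]
Summing over $Q \in T_{O,h}$ and then over $O \in S$, and applying the discrete Cauchy--Schwarz inequality to each of the two sums, yields
\[\left|\sum_{O \in S}\sum_{Q \in T_{O,h}} \int_Q \nabla U \cdot \nabla_Q f\right| \leq \left(\sum_{O \in S}\sum_{Q \in T_{O,h}} \int_Q \left|\nabla U\right|^2\right)^{1/2}\|f\|_S,\]
where the second factor is exactly $\|f\|_S$ by its definition.

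It then remains to bound the $U$-dependent factor by $\lambda_\Sigma(h)$. For this I would use Lemma~\ref{lem:derivative_estimation}, which gives $\left|\nabla U\right| \leq \sqrt{2}\,C^{(1)}_{U,O}\, r^{\gamma_O-1}$ in the polar coordinates around $O$, together with the observation already exploited in the proof of Lemma~\ref{lem:contribution_inside} that every $Q \in T_{O,h}$ lies inside the disk of radius $3h$ about $O$. Extending the integration to the full annular sector then gives
\[\sum_{Q \in T_{O,h}} \int_Q \left|\nabla U\right|^2 \leq 2\left(C^{(1)}_{U,O}\right)^2 \int_0^{3h}\int_0^{2\pi/\gamma_O} r^{2\gamma_O-1}\,dr\,d\psi = \frac{2\pi\left(C^{(1)}_{U,O}\right)^2 3^{2\gamma_O}}{\gamma_O^2}\,h^{2\gamma_O}.\]
Summing over $O \in S$ and using that $h \leq 1$, so that $h^{2\gamma_O}\leq h^{2\gamma_\Sigma}$ for each $O$ because $\gamma_\Sigma \leq \gamma_O$, the $U$-factor is bounded by a constant times $h^{2\gamma_\Sigma}$.

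The last step is the elementary comparison $h^{2\gamma_\Sigma}\leq \lambda_\Sigma(h)$, valid for $h \leq C^{(8)}$ in all three regimes: $h^{2\gamma_\Sigma}\leq h = \lambda_\Sigma(h)$ when $\gamma_\Sigma>1/2$, $h^{2\gamma_\Sigma}=h\leq 2h|\log h| = \lambda_\Sigma(h)$ when $\gamma_\Sigma=1/2$, and equality when $\gamma_\Sigma<1/2$. Taking the square root converts the $U$-factor into $\sqrt{\lambda_\Sigma(h)}$ and produces the claimed constant $C^{(10)}_{U,\phi}$. For the $h$-adapted case I would replace $T_{O,h}$ by $T_{O,h,(h)}$ and use that, by Lemma~\ref{lem:adapted}, such a quadrilateral lies in the disk of radius $\left(3+\frac{\pi}{\gamma_O}\right)h^{1/\gamma_O}$ about $O$, so that the same integral is bounded by a constant times $h^{2}$; the square root then supplies the linear factor $h$ and hence $C^{(10,h)}_{U,\phi}$. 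The argument is essentially routine; the only point requiring care is the double application of Cauchy--Schwarz so that the $f$-dependent factor assembles precisely into $\|f\|_S$ rather than a coarser norm, which is exactly what keeps the consistency error sharp enough to match the approximation error of Proposition~\ref{prop:approximation_error} in the Second Lemma of Strang.
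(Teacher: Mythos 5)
Your proposal is correct and follows essentially the same route as the paper: pointwise Cauchy--Schwarz on each $Q$, discrete Cauchy--Schwarz over the sum, the bound $|\nabla U|\leq\sqrt{2}\,C^{(1)}_{U,O}r^{\gamma_O-1}$ from Lemma~\ref{lem:derivative_estimation} integrated over the sector of radius $3h$ (resp.\ $(3+\pi/\gamma_O)h^{1/\gamma_O}$), and the comparison $h^{2\gamma_O}\leq\lambda_\Sigma(h)$ for $h\leq C^{(8)}$. The only cosmetic difference is that you apply Cauchy--Schwarz to the double sum over $O$ and $Q$ at once, while the paper treats each singularity separately and then combines the terms $\|f\|_{T_{O,h}}$ via the arithmetic--quadratic mean inequality; both yield $\|f\|_S$ up to a constant.
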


\begin{proof}
First, observe that any $Q \in T_{O,h}$ is contained in the disk of radius $3h$ around $O$. Using Cauchy-Schwarz inequality and Lemma \ref{lem:derivative_estimation}, we have:
\begin{align*}
\left|\sum\limits_{Q \in T_{O,h}} \int_Q \nabla U \cdot \nabla_Q f\right| &\leq \sum\limits_{Q \in T_{O,h}} \sqrt{\int_Q \left|\nabla U\right|^2} \sqrt{\int_Q \left|\nabla_Q f\right|^2}\\
&\leq \sqrt{\sum\limits_{Q \in T_{O,h}} \int_Q \left|\nabla U\right|^2} \sqrt{\sum\limits_{Q \in T_{O,h}} \int_Q \left|\nabla_Q f\right|^2}\\
&\leq \|f\|_{ T_{O,h}}\sqrt{2} C^{(1)}_{U,O}\sqrt{\int_0^{3h} \int_0^{2\pi/\gamma_O} r^{2\gamma_O-2}rdrd\psi}
&= 3^{\gamma_O}\sqrt{2\pi}\frac{C^{(1)}_{U,O}}{\gamma_O} h^{\gamma_O}\|f\|_{ T_{O,h}}.
\end{align*}
The results now follow from the inequality of the arithmetic and the quadratic mean for the $\|f\|_{T_{O,h}}$ and $h \leq C^{(8)}$ in a similar way as in the proof of Proposition~\ref{prop:approximation_error}. The $h$-adapted case follows similarly.
\end{proof}

\begin{lemma}\label{lem:contribution_inside3}
\begin{align*}
\left|\sum\limits_{O \in S}\sum\limits_{Q \in T_{O,h}}\sum\limits_{e \in \partial Q}\int_e f(e)\partial_n U \right|&\leq C^{(11)}_{U,\phi}\sqrt{\lambda_\Sigma(h)}\|f\|_{S} \quad \textnormal{if $h \leq C^{(8)}$}\\
\textnormal{and} \quad \left|\sum\limits_{O \in S}\sum\limits_{Q \in T_{O,h,(h)}}\sum\limits_{e \in \partial Q}\int_e f(e)\partial_n U \right|&\leq  C^{(11,h)}_{U,\phi} h\|f\|_{S}\quad \textnormal {if $(\Sigma,\Lambda)$ is $h$-adapted and $h \leq C^{(8,h)}$.}
\end{align*}
\end{lemma}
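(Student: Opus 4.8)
The plan is to follow the now-familiar two-stage scheme: first prove, for each conical singularity $O\in S$ separately, a per-singularity bound of the form $\left|\sum_{Q\in T_{O,h}}\sum_{e\in\partial Q}\int_e f(e)\partial_n U\right|\le C_O\,h^{\gamma_O}\,\|f\|_{T_{O,h}}$, and then assemble these estimates over $O\in S$ exactly as in the proofs of Lemma~\ref{lem:contribution_inside2} and Proposition~\ref{prop:approximation_error}. For $h\le C^{(8)}$ one has $h^{\gamma_O}\le\sqrt{\lambda_\Sigma(h)}$ (recall $\gamma_O\ge\gamma_\Sigma$ and $0<h\le1$, with the logarithmic case $\gamma_\Sigma=\tfrac12$ handled as there), and the inequality between the arithmetic and the quadratic mean gives $\sum_{O\in S}\|f\|_{T_{O,h}}\le\sqrt{|S|}\,\|f\|_S$, producing the constant $C^{(11)}_{U,\phi}$. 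The $h$-adapted statement will follow identically once the per-singularity bound is replaced by one of the form $C^{(h)}_O\,h\,\|f\|_{T_{O,h,(h)}}$.

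For the per-singularity estimate I would exploit the harmonicity of $U$. Since $f(e)$ is constant on $e$ and $\sum_{e\in\partial Q}\int_e\partial_n U=\int_{\partial Q}\partial_n U=\int_Q\triangle U=0$, I may subtract from each $f(e)$ the value $c_Q$ of the linear interpolant of $f$ at the center of the Varignon parallelogram $F_Q$ without changing the sum over $\partial Q$. Writing $L_Q:=\max_{e\in\partial Q}|e|$, the oscillation of this interpolant over $F_Q$ obeys $|f(e)-c_Q|\le\tfrac12 L_Q|\nabla_Q f|$ (the center-to-vertex distance of $F_Q$ is a quarter of a diagonal of $Q$), while $\left|\int_e\partial_n U\right|\le\int_e|\nabla U|\le\sqrt2\,C^{(1)}_{U,O}\int_e r^{\gamma_O-1}\,ds$ by Lemma~\ref{lem:derivative_estimation}. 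Hence a single $Q\in T_{O,h}$ contributes at most $\tfrac12 L_Q|\nabla_Q f|\,I_Q$ with $I_Q:=\sum_{e\in\partial Q}\int_e|\nabla U|$. Applying the Cauchy--Schwarz inequality to $\sum_{Q\in T_{O,h}}\big(|\nabla_Q f|\sqrt{\textnormal{area}(Q)}\big)\big(\tfrac12 L_Q I_Q/\sqrt{\textnormal{area}(Q)}\big)$ and invoking the geometric bound $L_Q^2\le\tfrac{2}{\sin^3\phi}\textnormal{area}(Q)$ of Lemma~\ref{lem:geometric_bounds}, the whole matter reduces to showing $\sum_{Q\in T_{O,h}}I_Q^2\le C\,h^{2\gamma_O}$.

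To control this last sum I would bound each edge integral by $\int_e r^{\gamma_O-1}\,ds\lesssim|e|\max\{|Ox|,|Oy|\}^{\gamma_O-1}$, exactly as in the proof of Lemma~\ref{lem:difference_quotient} (note that the \emph{farther} endpoint appears, keeping the bound finite even for edges meeting $O$), and then use $I_Q^2\le 4\sum_{e\in\partial Q}\big(\int_e|\nabla U|\big)^2$ together with the fact that each edge lies on at most two quadrilaterals. The edges incident to $O$ are treated separately: $\phi$-regularity bounds their number by $\tfrac{2\pi}{\gamma_O\phi}$, since each incident quadrilateral subtends an angle $\ge\phi$ at $O$ out of the total angle $2\pi/\gamma_O$, and each contributes at most $|e|^{2\gamma_O}\le h^{2\gamma_O}$. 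For the remaining, non-incident edges I would compare $\sum|e|^2\max\{\,\cdot\,\}^{2\gamma_O-2}$ with the area integral $\int_{B(O,3h)}r^{2\gamma_O-2}\,r\,dr\,d\psi=\tfrac{2\pi}{\gamma_O}\int_0^{3h}r^{2\gamma_O-1}\,dr\lesssim h^{2\gamma_O}$, relying on the non-degeneracy of $\phi$-regular quadrilaterals and bounded edge multiplicity, precisely the area-comparison already performed in Lemmas~\ref{lem:gradient_inside} and~\ref{lem:contribution_inside}. In the $h$-adapted case one replaces $T_{O,h}$ by $T_{O,h,(h)}$; Lemma~\ref{lem:adapted} confines these quadrilaterals to the disk of radius $(3+\pi/\gamma_O)h^{1/\gamma_O}$, so the same integral now returns $h^2$ and yields the per-singularity bound with $h$ in place of $h^{\gamma_O}$.

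The delicate point, and the step I expect to be the main obstacle, is the bound $\sum_{Q\in T_{O,h}}I_Q^2\le C\,h^{2\gamma_O}$ when $\gamma_O\le\tfrac12$: there $|\nabla U|$ fails to be square-integrable along edges meeting $O$, so any estimate routed through $\int_e|\nabla U|^2$ (a naive edge-wise Cauchy--Schwarz, or subtracting $\partial_n U_\Lambda$ as in the treatment of $\tilde T$ in~\eqref{eq:consistency}) diverges. It is essential to retain the $L^1$ bound $\int_e|\nabla U|\lesssim|e|^{\gamma_O}$ on incident edges and to use the \emph{edge-scale} oscillation $\tfrac12 L_Q|\nabla_Q f|$ rather than the global scale $h$, so that the ratio $L_Q^2/\textnormal{area}(Q)$ stays bounded and no spurious factor $(h/L_Q)^{1-\gamma_O}$ survives for small quadrilaterals clustering at $O$. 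As a consistency check one may instead split $\int_e f(e)\partial_n U=\int_e f\,\partial_n U-\int_e(f-f(e))\partial_n U$ and recognize $\sum_{Q}\int_Q\nabla U\cdot\nabla_Q f$, obtained from Green's identity, as the quantity already bounded in Lemma~\ref{lem:contribution_inside2}, which makes the agreement of the two routes transparent.
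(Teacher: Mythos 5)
Your architecture coincides with the paper's: exploit the harmonicity identity $\sum_{e\in\partial Q}\int_e\partial_n U=0$ to subtract a per-quadrilateral constant from the $f(e)$ (your $c_Q$, the interpolant's value at the centroid of $F_Q$, is exactly the paper's choice $(f(w)+f(x)+f(y)+f(z))/4$), bound the resulting oscillation by $|\nabla_Q f|\sqrt{\text{area}(Q)}$ up to a $\phi$-dependent constant, control $\sum_{e\in\partial Q}\int_e|\partial_n U|$ through the $L^1$-estimate underlying Lemma~\ref{lem:difference_quotient}, apply Cauchy--Schwarz over $Q\in T_{O,h}$, and assemble over $O\in S$ exactly as in Proposition~\ref{prop:approximation_error}. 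Up to immaterial constants (the center-to-vertex displacement of $F_Q$ is a quarter of the \emph{sum} of the two diagonal vectors, not of a single diagonal, so the oscillation bound should read $\tfrac14(|d_1|+|d_2|)|\nabla_Q f|$ rather than $\tfrac12 L_Q|\nabla_Q f|$), this part is fine.

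The step you yourself flag as the main obstacle, however, is left without a workable justification. For $\gamma_O<1$ you need $\sum_{Q\in T_{O,h}}\sum_{e\in\partial Q}|e|^2\max\{|Ox|,|Oy|\}^{2\gamma_O-2}\lesssim h^{2\gamma_O}$, and you propose to get it from ``the area-comparison already performed in Lemmas~\ref{lem:gradient_inside} and~\ref{lem:contribution_inside}.'' That comparison is $|Oz_Q|^{2\gamma_O-2}\,\text{area}(Q)\leq 2^{4+2|\gamma_O-1|_0}\int_Q r^{2\gamma_O-2}\,r\,dr\,d\psi$, phrased in terms of the vertex $z_Q$ of \emph{maximal} distance to $O$; it is usable in Lemma~\ref{lem:gradient_inside} because for a \emph{diagonal} Lemma~\ref{lem:geometric_bounds} supplies the bounded ratio $|Oz_Q|\leq\frac{2+\sin(\phi)}{\sin(\phi)}\max\{|Ow|,|Oy|\}$. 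For an \emph{edge} $e=pp'$ no such bounded-ratio estimate is available from $\phi$-regularity, and the naive substitution goes the wrong way: $\max\{|Op|,|Op'|\}\leq|Oz_Q|$ forces $\max\{|Op|,|Op'|\}^{2\gamma_O-2}\geq|Oz_Q|^{2\gamma_O-2}$ when $\gamma_O<1$. A per-edge bound $|e|^2\max\{\cdot\}^{2\gamma_O-2}\leq 4|e|^{2\gamma_O}\leq 4h^{2\gamma_O}$ does not help either, since the number of quadrilaterals in $T_{O,h}$ is unbounded (no uniformity is assumed here), so the sum of such terms need not be $O(h^{2\gamma_O})$. The missing ingredient --- which is precisely the technical content of the paper's proof of this lemma --- is the conversion $|Oz_Q|\leq\bigl(1+2|pz_Q|/|pp'|\bigr)|Op|$ followed by the interpolation $|pp'|\,|Op|^{\gamma_O-1}\leq|pp'|^{\gamma_O}\bigl(|pp'|+2|pz_Q|\bigr)^{1-\gamma_O}|Oz_Q|^{\gamma_O-1}\leq 5^{|1-\gamma_O|_0}\sqrt{\tfrac{2}{\sin^3(\phi)}\text{area}(Q)}\,|Oz_Q|^{\gamma_O-1}$, where the last inequality uses that both $|pp'|$ and $|pz_Q|$ are controlled by $\sqrt{\text{area}(Q)}$ via Lemma~\ref{lem:geometric_bounds}. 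Only after this conversion does the area comparison apply and yield $\sum_Q|Oz_Q|^{2\gamma_O-2}\text{area}(Q)\lesssim h^{2\gamma_O}$. With that step inserted your argument closes (and your separate counting of edges incident to $O$ becomes unnecessary, since the bound is uniform down to $\min\{|Op|,|Op'|\}=0$); without it, the summability claim at the heart of your sketch is unsupported.
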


\begin{proof}
Let $Q \subset D_O$ be a quadrilateral with vertices $w,x,y,z$. Since $U$ is harmonic, $\sum_{e \in \partial Q} \int_e \partial_n U=0$ follows from Green's first identity. In particular, we can add a constant to all $f(e)$ without changing $\sum_{e \in \partial Q} \int_e f(e) \partial_n U=0$. We choose this constant to be $(f(w)+f(x)+f(y)+f(z))/4$. Then, the four values of $f$ on the four edges become $\pm((f(z)-f(x))\pm(f(y)-f(w)))/4$. Their absolute value can be bounded from above by: \[\left|\nabla_Q f\right|\frac{|d_1|+|d_2|}{4}\leq \left|\nabla_Q f\right|\frac{\sqrt{|d_1||d_2|}}{2}\leq \left|\nabla_Q f\right|\frac{\sqrt{\text{area}(Q)}}{\sqrt{2 \sin(\phi)}}\] using $\phi$-regularity of $Q$. Here, $d_1, d_2$ denote the two diagonals of $Q$.

Without loss of generality, assume $z$ is the vertex of maximum distance to $O$. Since $\left|\partial_n U\right|\leq |\nabla U|$, we can use the same constant $C^{(5)}{U,O}$ and the same ideas as in Lemma~\ref{lem:difference_quotient} to bound $\sum{e \in \partial Q} \int_e \left|\partial_n U\right|$ by:
\[C^{(5)}_{U,O} \left(|yz||Oz|^{\gamma_O-1}+|zw||Oz|^{\gamma_O-1}+|wx|\max\{|Ow|,|Ox|\}^{\gamma_O-1}+|xy|\max\{|Ox|,|Oy|\}^{\gamma_O-1}\right).\]

Let $p,p'$ be two adjacent vertices among $w,x,y$. Assume $|Op|\geq|Op'|$. By triangle inequality,
\[|Oz|-|Op|\leq|pz|=\frac{|pz|}{|pp'|}|pp'|\leq \frac{|pz|}{|pp'|}(|Op|+|Op'|)\leq 2\frac{|pz|}{|pp'|}|Op|\Rightarrow |Oz| \leq \left(1+ 2\frac{|pz|}{|pp'|}\right)|Op|.\]
Together with the bound of edge lengths in Lemma~\ref{lem:geometric_bounds} (which also bounds half the length of a diagonal), we now deduce:
\[|pp'||Op|^{\gamma_O-1}\leq |pp'|^{\gamma_O}|pp'|^{1-\gamma_O}\left(1+ 2\frac{|pz|}{|pp'|}\right)^{|1-\gamma_O|_0}\leq \sqrt{\frac{2}{\sin^3(\phi)}\text{area}(Q)}5^{|1-\gamma_O|_0}|Oz|^{\gamma_O-1}.\]

We thus obtain: \[\sum\limits_{e \in \partial Q}\int_e |\partial_n U| \leq 2C^{(5)}_{U,O} \left(1+5^{|1-\gamma_O|_0}\right)\sqrt{\frac{2}{\sin^3(\phi)}\text{area}(Q)}|Oz|^{\gamma_O-1}.\]

In the following, we denote by $z_Q$ the vertex of the quadrilateral $Q$ that has the maximum distance to $O$. Combining the above two results and applying the Cauchy-Schwarz inequality again yields:
\begin{align*}
\left|\sum\limits_{Q \in T_{O,h}}\sum\limits_{e \in \partial Q}\int_e f(e)\partial_n U \right| &\leq \sum\limits_{Q \in T_{O,h}} \frac{2C^{(5)}_{U,O} \left(1+5^{|1-\gamma_O|_0}\right)}{\sin^2(\phi)}|\nabla_Q f||Oz_Q|^{\gamma_O-1}\text{area}(Q)\\
&\leq \frac{2C^{(5)}_{U,O} \left(1+5^{|1-\gamma_O|_0}\right)}{\sin^2(\phi)} \sqrt{\sum\limits_{Q \in T_{O,h}} |\nabla_Q f|^2 \text{area}(Q)}\sqrt{\sum\limits_{Q \in T_{O,h}}|Oz_Q|^{2\gamma_O-2}\text{area}(Q)}\\
&\leq \frac{2C^{(5)}_{U,O} \left(1+5^{|1-\gamma_O|_0}\right)}{\sin^2(\phi)} \|f\|_{T_{O,h}}\sqrt{\sum\limits_{Q \in T_{O,h}}|Oz_Q|^{2\gamma_O-2}\text{area}(Q)}.
\end{align*}

Introducing polar coordinates $(r,\psi)$ around $O$ again, we can use for the latter term the results that we obtained in the proofs of Lemmas~\ref{lem:gradient_inside} and~\ref{lem:contribution_inside}. Thus,
\[\sum\limits_{Q \in T_{O,h}}|Oz_Q|^{2\gamma_O-2}\text{area}(Q)\leq 2^{4+2|\gamma_O-1|_0}\sum\limits_{Q \in T_{O,h}}\int_Q r^{2\gamma_O-2}rdrd\psi\leq 2^{4+2|\gamma_O-1|_0}\frac{3^{2\gamma_O}\pi}{\gamma_O^2}h^{2\gamma_O}.\]

The results now follow from the inequality of the arithmetic and the quadratic mean for the $\|f\|_{T_{O,h}}$ and $h \leq C^{(8)}$ in a similar way as in the proof of Proposition~\ref{prop:approximation_error}. The $h$-adapted case follows similarly.
\end{proof}

By applying Corollary~\ref{cor:contribution_outside3}, Lemma~\ref{lem:contribution_inside2}, and Lemma~\ref{lem:contribution_inside2}, we can bound the three sums in Equation (\ref{eq:consistency}), leading to an estimate for the consistency error in Lemma~\ref{lem:strang}.

\begin{proposition}\label{prop:consistency_error}
For a given function $f:V(\Lambda)\to\mathds{R}$, if the maximum edge length $h$ of the $\phi$-regular discretization $(\Sigma,\Lambda)$ satisfies $h \leq C^{(8)}$ or $h \leq C^{(8,h)}$ in the $h$-adapted case, then the consistency error can be bounded as follows:
\begin{align*}
a(u,f)&\leq C^{(12)}_{U,\phi}\sqrt{\lambda_\Sigma(h)}\left\|f\right\|\\
\textnormal{and} \quad a(u,f)&\leq C^{(12,h)}_{U,\phi}\sqrt{h}\left\|f\right\| \quad \textnormal{in the $h$-adapted case}.
\end{align*}
\end{proposition}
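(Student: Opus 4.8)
The plan is to bound $|a(U,f)|$ directly through the three-term splitting $(\ref{eq:consistency})$, which was engineered precisely so that each summand is controlled by one of the estimates already assembled above. The three contributions are the bulk term $\sum_{O\in S}\sum_{Q\in T_{O,h}}\int_Q \nabla U\cdot\nabla_Q f$ over the small disks around the singularities, the boundary term $\sum_{O\in S}\sum_{Q\in T_{O,h}}\sum_{e\in\partial Q}\int_e f(e)\,\partial_n U$ over those same quadrilaterals, and the ``good'' term $\sum_{Q\in\tilde T}\sum_{e\in\partial Q}\int_e (f-f(e))\,\partial_n U$ away from the singularities. I estimate the three pieces separately and then reassemble them by the triangle inequality.

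For the bulk and boundary terms near the singularities I invoke Lemma~\ref{lem:contribution_inside2} and Lemma~\ref{lem:contribution_inside3}, which give $C^{(10)}_{U,\phi}\sqrt{\lambda_\Sigma(h)}\,\|f\|_S$ and $C^{(11)}_{U,\phi}\sqrt{\lambda_\Sigma(h)}\,\|f\|_S$ respectively. For the good term I use the Cauchy--Schwarz chain already carried out before Lemma~\ref{lem:gradient_outside2}, which reduces it to $\sqrt{2/(3\sin^3\phi)}\,\|f\|_{\tilde T}$ times $\sqrt{\sum_{Q\in\tilde T}\sum_{e\in\partial Q}|e|\int_e|\nabla(U-U_\Lambda)|^2}$; the radicand is bounded by Corollary~\ref{cor:contribution_outside3} by $\tfrac{8}{\sin^3\phi}\,\lambda_\Sigma(h)\sum_{O\in S}C^{(4)}_{U,O,\phi}$, so this piece is again of the form $C\sqrt{\lambda_\Sigma(h)}\,\|f\|_{\tilde T}$. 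Crucially, all three pieces share the same $\sqrt{\lambda_\Sigma(h)}$ rate, which is what allows a clean combination.

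The final step is the norm comparison. Since $\tilde T$ and the families $\{T_{O,h}\}_{O\in S}$ together partition $F(\Lambda)$, the partial energies satisfy $\|f\|_{\tilde T}^2\le\|f\|^2$ and $\|f\|_S^2=\sum_{O\in S}\|f\|_{T_{O,h}}^2\le\|f\|^2$. Substituting $\|f\|_{\tilde T},\|f\|_S\le\|f\|$ into the three bounds and adding them yields $|a(U,f)|\le C^{(12)}_{U,\phi}\sqrt{\lambda_\Sigma(h)}\,\|f\|$, with $C^{(12)}_{U,\phi}$ collecting the three constants; this is valid precisely when $h\le C^{(8)}$, the common threshold imposed by Corollary~\ref{cor:contribution_outside3}, Lemma~\ref{lem:contribution_inside2}, and Lemma~\ref{lem:contribution_inside3}. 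The $h$-adapted statement follows by repeating the argument verbatim, replacing each $T_{O,h}$ by the corresponding $T_{O,h,(h)}$ for $\gamma_O<1/2$ and invoking the $h$-adapted halves of these three results, which replace $\sqrt{\lambda_\Sigma(h)}$ by $\sqrt{h}$.

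There is no serious remaining obstacle: all the analytic effort---the gradient approximation near and far from the singularities, the geometric edge-length bounds, and the handling of the $r^{2\gamma_O-2}$ weights---has already been spent in Lemmas~\ref{lem:gradient}--\ref{lem:contribution_inside3} and Corollary~\ref{cor:contribution_outside3}. The only point requiring a moment's care is ensuring that the partial gradient norms $\|f\|_{\tilde T}$ and $\|f\|_S$ are genuinely dominated by the global norm $\|f\|$, which is immediate from the partition of $F(\Lambda)$ into $\tilde T$ and the $T_{O,h}$.
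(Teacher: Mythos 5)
Your proposal is correct and follows essentially the same route as the paper: the paper likewise bounds the three sums in the splitting $(\star)$ by Corollary~\ref{cor:contribution_outside3}, Lemma~\ref{lem:contribution_inside2}, and Lemma~\ref{lem:contribution_inside3}, and combines them. Your added observation that $\left\|f\right\|_{\tilde{T}}$ and $\left\|f\right\|_{S}$ are dominated by $\left\|f\right\|$ is exactly the (implicit) final step needed to arrive at the stated bound with a single constant $C^{(12)}_{U,\phi}$.
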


%%%%%%%%%%%%%%%%%%%%%%%%%%%%

\subsection{The convergence theorems}\label{sec:convergence_theorems}

We now present the convergence theorem that summarizes the results from the previous two sections. This theorem is analogous to Lemma 4.16 in \cite{BoSk12}, adapted to our context.

\begin{theorem}\label{th:energy_convergence}
Let $P \in \mathds{R}^{2g}$ be a given vector of periods. Consider $\Omega$, the unique smooth holomorphic differential with real parts of its $a$- and $b$-periods given by $P$, and $\omega$, the unique discrete holomorphic differential with real parts of its black and white $a$- and $b$-periods given by $P$, such that corresponding black and white $a$- and $b$-periods coincide. Let $U:=\int \re(\Omega)$ and $u:=\int \re(\omega)$ be the corresponding multi-valued (discrete) harmonic functions.

For any $\phi$-regular discretization $(\Sigma,\Lambda)$ with a maximal side length $h$, the following holds:
\begin{align*}
\frac{1}{2}\left|\langle \Omega,\Omega \rangle -\langle \omega,\omega \rangle \right|=\left| \|U\|^2-\|u\|^2 \right\| &\leq C^{(13)}_{P,\phi} \lambda_\Sigma(h)\quad \textnormal{if $h \leq C^{(8)}$}\\
\textnormal{and} \quad \frac{1}{2}\left|\langle \Omega,\Omega \rangle -\langle \omega,\omega \rangle \right|=\left| \|U\|^2-\|u\|^2 \right\| &\leq C^{(13,h)}_{P,\phi}h\quad \textnormal{if $(\Sigma,\Lambda)$ is $h$-adapted and $h \leq C^{(8,h)}$}.
\end{align*}
\end{theorem}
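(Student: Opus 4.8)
The plan is to deduce the energy estimate from the energy-norm estimate $\|U-u\|=O(\sqrt{\lambda_\Sigma(h)})$ supplied by the Second Lemma of Strang, but with a crucial quadratic improvement: a naive Cauchy--Schwarz bound $|\,\|U\|^2-\|u\|^2\,|\le\|U-u\|\,(\|U\|+\|u\|)$ would only yield the rate $\sqrt{\lambda_\Sigma(h)}$, whereas the theorem asserts the full rate $\lambda_\Sigma(h)$. The starting point is therefore the bilinear identity
\[
\|U\|^2-\|u\|^2=2\,a(U,U-u)-\|U-u\|^2,
\]
which holds because $a$ is symmetric on the mixed smooth/discrete arguments and $\|U-u\|^2=a(U-u,U-u)$. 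The second term is $O(\lambda_\Sigma(h))$ as soon as $\|U-u\|=O(\sqrt{\lambda_\Sigma(h)})$, and the whole work lies in showing $a(U,U-u)=O(\lambda_\Sigma(h))$.

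For the energy-norm bound I would apply Lemma~\ref{lem:strang} with $v=U_\Lambda\in M_P$: Proposition~\ref{prop:approximation_error} controls the approximation error by $\|U-U_\Lambda\|\le\sqrt{C^{(9)}_{U,\phi}\lambda_\Sigma(h)}$, and Proposition~\ref{prop:consistency_error} controls the consistency error $\sup_{f\in M_0}|a(U,f)|/\|f\|$ by $C^{(12)}_{U,\phi}\sqrt{\lambda_\Sigma(h)}$, so that $\|U-u\|\le C\sqrt{\lambda_\Sigma(h)}$ and hence also $\|U_\Lambda-u\|\le\|U_\Lambda-U\|+\|U-u\|=O(\sqrt{\lambda_\Sigma(h)})$. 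Next I split the cross term as $a(U,U-u)=a(U,U-U_\Lambda)+a(U,U_\Lambda-u)$. Since $U_\Lambda$ and $u$ share the periods $P$, the difference $U_\Lambda-u$ lies in $M_0$, so $a(U,U_\Lambda-u)$ is exactly a consistency pairing and Proposition~\ref{prop:consistency_error} gives $|a(U,U_\Lambda-u)|\le C^{(12)}_{U,\phi}\sqrt{\lambda_\Sigma(h)}\,\|U_\Lambda-u\|=O(\lambda_\Sigma(h))$; it is precisely the product of two factors of order $\sqrt{\lambda_\Sigma(h)}$ that realizes the quadratic rate.

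The remaining term $a(U,U-U_\Lambda)=\sum_{Q}\int_Q\nabla U\cdot(\nabla U-\nabla_Q U_\Lambda)$ is the main obstacle, as it does not follow from the two packaged propositions and a global Cauchy--Schwarz estimate would again lose a square root. Instead I would re-run the localization from the proof of Proposition~\ref{prop:approximation_error}: away from each singularity $O$ (on $Q\in T'_{O,h}$) I pair the pointwise bound $|\nabla U|\le C^{(1)}_{U,O}r^{\gamma_O-1}$ of Lemma~\ref{lem:derivative_estimation} with $\max_Q|\nabla U-\nabla_Q U_\Lambda|\le C h\,r^{\gamma_O-2}$ coming from Lemmas~\ref{lem:gradient} and~\ref{lem:derivative_estimation}, obtaining $\int_Q|\nabla U|\,|\nabla U-\nabla_Q U_\Lambda|\le C h\int_Q r^{2\gamma_O-2}\,dr\,d\psi$, which is exactly the integrand summed in Lemmas~\ref{lem:gradient_outside} and~\ref{lem:contribution_outside}; near the singularity (on $Q\in T_{O,h}$) I bound $\int_Q|\nabla U|\,|\nabla U-\nabla_Q U_\Lambda|\le\int_Q|\nabla U|\,(|\nabla U|+|\nabla_Q U_\Lambda|)$ and invoke Lemma~\ref{lem:gradient_inside} together with Cauchy--Schwarz to reproduce the bound of Lemma~\ref{lem:contribution_inside}. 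Summing over the finite cover used in Proposition~\ref{prop:approximation_error} then yields $|a(U,U-U_\Lambda)|\le C\lambda_\Sigma(h)$. The decisive gain over the naive argument is that the approximation error $\nabla U-\nabla_Q U_\Lambda$ carries one factor of $h$, which survives linearly when paired with the merely bounded (weighted) $\nabla U$, rather than being squared and recombined against the fixed constant $\|U\|$.

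Combining the three estimates gives $|\,\|U\|^2-\|u\|^2\,|\le C^{(13)}_{P,\phi}\lambda_\Sigma(h)$, and the equality with $\tfrac12|\langle\Omega,\Omega\rangle-\langle\omega,\omega\rangle|$ follows from $\langle\Omega,\Omega\rangle=2\|U\|^2$ and $\langle\omega,\omega\rangle=2\|u\|^2$ as recorded in Lemmas~\ref{lem:energy_continuous} and~\ref{lem:energy_discrete}. For the $h$-adapted case one substitutes the $h$-adapted variants of Propositions~\ref{prop:approximation_error} and~\ref{prop:consistency_error} and of the contribution lemmas, replacing $\lambda_\Sigma(h)$ by $h$ throughout; every step above then delivers a bound of order $h$. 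I expect the localized estimate of $a(U,U-U_\Lambda)$ to be the only genuinely laborious ingredient, with all other steps being immediate consequences of the quoted results.
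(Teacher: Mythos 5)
Your proposal is correct, and at the decisive step it takes a genuinely different route from the paper. The paper likewise starts from Strang's lemma and the polarization identity $\|U\|^2-\|u\|^2=2a(U-u,U)-\|U-u\|^2$ (written there with a harmless sign slip as $+\|U-u\|^2$), but it then bounds the cross term by the global Cauchy--Schwarz estimate $|a(U-u,U)|\le\|U-u\|\,\|U\|=O(\sqrt{\lambda_\Sigma(h)})$ and passes to the final bound $C\,\lambda_\Sigma(h)$ citing only ``$h\le C^{(8)}\le 1$''. That last step goes the wrong way: since $\lambda_\Sigma(h)\le 1$ one has $\sqrt{\lambda_\Sigma(h)}\ge\lambda_\Sigma(h)$, so the paper's chain as written only establishes the rate $\sqrt{\lambda_\Sigma(h)}$. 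Your refinement --- splitting $a(U,U-u)=a(U,U-U_\Lambda)+a(U,U_\Lambda-u)$, observing that $U_\Lambda-u\in M_0$ so that Proposition~\ref{prop:consistency_error} pairs a factor $\sqrt{\lambda_\Sigma(h)}$ with $\|U_\Lambda-u\|=O(\sqrt{\lambda_\Sigma(h)})$, and re-running the localized weighted estimates of Lemmas~\ref{lem:derivative_estimation}--\ref{lem:contribution_inside} to bound $a(U,U-U_\Lambda)$ directly by $O(\lambda_\Sigma(h))$ instead of through a global Cauchy--Schwarz --- is precisely what is needed to substantiate the linear-in-$\lambda_\Sigma(h)$ rate asserted in the theorem and used downstream in Theorems~\ref{th:quadratic_form_convergence} and~\ref{th:matrix_convergence}. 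The price is the extra localization argument for $a(U,U-U_\Lambda)$, but all of its ingredients ($|\nabla U|\lesssim r^{\gamma_O-1}$ paired with $|\nabla U-\nabla_Q U_\Lambda|\lesssim h\,r^{\gamma_O-2}$ on $T'_{O,h}$, and the $L^2$ bounds of Lemmas~\ref{lem:derivative_estimation} and~\ref{lem:gradient_inside} on $T_{O,h}$) are already present in the paper, and the same substitutions handle the $h$-adapted case; so your version buys the stated rate at the cost of one additional, but routine, localized estimate.
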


\begin{proof}
By applying the Second Lemma of Strang, Lemma~\ref{lem:strang}, along with the estimates on the approximation and consistency errors in Propositions~\ref{prop:approximation_error} and~\ref{prop:consistency_error}, we deduce the inequality
\[\|U-u\| \leq 2 \inf\limits_{v \in M_P} \left\|U-v\right\|+\sup\limits_{f \in M_0} \frac{\left|a(U,f)\right|}{\left\|f\right\|} \leq  2 \sqrt{C^{(9)}_{U,\phi}\lambda_\Sigma(h)}+ C^{(12)}_{U,\phi}\sqrt{\lambda_\Sigma(h)}.\]

Now, using the Cauchy-Schwarz inequality for the positive definite scalar product $a$, we have
\begin{align*}
\left|\|U\|^2- \|u\|^2\right|&=\left|a(U-u,U+u)\right|=\left|a(U-u,U-u)+2a(U-u,U)\right|\\
&\leq \|U-u\|^2+2|a(U-u,U)|\leq \|U-u\|^2+2\|U-u\|\|U\|\\
&\leq \left(2 \sqrt{C^{(9)}_{U,\phi}}+ C^{(12)}_{U,\phi}\right)^2\lambda_\Sigma(h) + 2\left(2 \sqrt{C^{(9)}_{U,\phi}}+ C^{(12)}_{U,\phi}\right)\|U\|\sqrt{\lambda_\Sigma(h)}\\
&\leq \left(4 \left(C^{(9)}_{U,\phi}+\sqrt{C^{(9)}_{U,\phi}}\|U\|\right) + (C^{(12)}_{U,\phi})^2+ 2\|U\| C^{(12)}_{U,\phi}\right) \lambda_\Sigma(h)
\end{align*}
since $h \leq C^{(8)} \leq 1$. The argument for $h$-adapted discrete Riemann surfaces follows similarly.
\end{proof}

The following corollary, which will be useful in Section~\ref{sec:convergence2} when we establish the boundedness of the discrete energy form $E_\Lambda$, is presented:

\begin{corollary}\label{cor:energy_convergence2}
Let $P^B,P^W \in \mathds{R}^{2g}$ be two given vectors of periods. Let $\omega'$ be the unique discrete holomorphic differential with the real parts of its black and white $a$- and $b$-periods given by $P^B$ and $P^W$, respectively. Let $u':=\int \re(\omega')$ be the corresponding multi-valued discrete harmonic function. Then, $\frac{1}{2}\langle \omega', \omega' \rangle= |u'|^2 \leq C^{(14)}_{P^B,P^W,\phi}$ holds true for any $\phi$-regular discretization $(\Sigma,\Lambda)$ that satisfies $h \leq C^{(8)}$.
\end{corollary}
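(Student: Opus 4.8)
The plan is to use the variational characterization of $u'$ and to bound its energy from above by that of an explicit, $h$-independent competitor. First, exactly as in the proof of Lemma~\ref{lem:energy_discrete}, holomorphicity of $\omega'$ gives $\langle\omega',\omega'\rangle=2\langle du',du'\rangle$ with $du'=\re(\omega')$, so that $\tfrac12\langle\omega',\omega'\rangle=\langle du',du'\rangle=a(u',u')=|u'|^2$; this already justifies the equality in the statement. Since $\omega'$ is discrete holomorphic we have $\star\omega'=-i\omega'$, whence $d(\re\omega')=0$ and $\delta(\re\omega')=0$, so $du'=\re(\omega')$ is a discrete harmonic differential. By Corollary~\ref{cor:harmonic}, $du'$ therefore has the smallest Dirichlet energy among all discrete one-forms whose black periods have real parts $P^B$ and whose white periods have real parts $P^W$. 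Consequently, for \emph{any} admissible competitor $f\colon V(\tilde\Lambda)\to\mathds{R}$ with these black and white periods we have $|u'|^2=a(u',u')=\langle du',du'\rangle\le\langle df,df\rangle=a(f,f)$, and it suffices to exhibit one such $f$ with $a(f,f)$ bounded independently of $h$.

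For the competitor I would take the smooth holomorphic differentials $\Omega^B,\Omega^W$ on $\Sigma$ whose $a$- and $b$-periods have real parts $P^B$ and $P^W$, set $U^B:=\int\re(\Omega^B)$ and $U^W:=\int\re(\Omega^W)$, and define $f$ on $V(\tilde\Lambda)$ by $f|_{V(\tilde\Gamma)}:=U^B$ and $f|_{V(\tilde\Gamma^*)}:=U^W$. Since $U^B$ and $U^W$ are multi-valued with the prescribed periods, checking the deck-transformation identities of the definition of multi-valuedness shows that $f$ has black periods $P^B$ and white periods $P^W$, so $f$ is admissible. On a quadrilateral $Q$ with black vertices $b_\pm$ and white vertices $w_\pm$, the discrete gradient is determined by $\nabla_Q f\cdot(b_+-b_-)=U^B(b_+)-U^B(b_-)$ and $\nabla_Q f\cdot(w_+-w_-)=U^W(w_+)-U^W(w_-)$, i.e.\ by the two difference quotients of $U^B$ and $U^W$ along the diagonals of $Q$.

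It remains to bound $a(f,f)=\sum_{Q\in F(\Lambda)}|\nabla_Q f|^2\,\textnormal{area}(Q)$ uniformly. This is precisely the estimate of Lemma~\ref{lem:gradient_inside}, now fed by two functions rather than one: fixing a centre $O$ with $Q\subset D_O$, Lemma~\ref{lem:difference_quotient} applied to $U^B$ along $b_-b_+$ and to $U^W$ along $w_-w_+$ bounds both difference quotients by a constant (depending on $U^B,U^W,O$) times $|Oz_Q|^{\gamma_O-1}$, where $z_Q$ is the vertex of $Q$ farthest from $O$. The projection Lemma~\ref{lem:projection} together with $\phi$-regularity then gives $|\nabla_Q f|\le\frac{2}{\sin\phi}\,C\,|Oz_Q|^{\gamma_O-1}$, and Lemma~\ref{lem:geometric_bounds} turns this into $|\nabla_Q f|^2\,\textnormal{area}(Q)\le C\int_Q r^{2\gamma_O-2}\,r\,dr\,d\psi$. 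Summing over all $Q\subset D_O$ and over the finite minimal cover $\{D_O\}_{O\in S'}$ of $\Sigma$ from the proof of Proposition~\ref{prop:approximation_error} — where $h\le C^{(8)}$ guarantees, via the Lebesgue number lemma, that every $Q$ lies in some $D_O$ — yields
\[
a(f,f)\le\sum_{O\in S'} C\int_{D_O} r^{2\gamma_O-2}\,r\,dr\,d\psi=\sum_{O\in S'} C\,\frac{\pi}{\gamma_O^2}\,R_O^{2\gamma_O},
\]
which is finite and independent of $h$. Hence $|u'|^2=a(u',u')\le a(f,f)\le C^{(14)}_{P^B,P^W,\phi}$, noting that $U^B,U^W$ (and thus all intermediate constants) are determined by $P^B,P^W$.

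The main obstacle is the behaviour near the conical singularities, where $\nabla U^B$, $\nabla U^W$, and hence $\nabla_Q f$, blow up like $r^{\gamma_O-1}$, so a naive pointwise bound is worthless. What rescues the argument is that this singular weight is square-integrable against the area element, $\int_0^{R_O} r^{2\gamma_O-1}\,dr<\infty$ for every $\gamma_O>0$; this is exactly why, in contrast to the convergence-rate estimates of Section~\ref{sec:approximation}, no splitting into quadrilaterals inside and outside the disk of radius $h$ around $O$ is needed here, since only boundedness — not a rate in $h$ — is being claimed.
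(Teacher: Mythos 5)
Your proof is correct, and its skeleton coincides with the paper's: both take the same competitor (the multi-valued function equal to $U^B$ on black and $U^W$ on white vertices of $\tilde\Lambda$), both invoke Corollary~\ref{cor:harmonic} to reduce to bounding that competitor's energy, and both use Lemma~\ref{lem:projection} with $\phi$-regularity to control its discrete gradient through the two diagonal difference quotients. The only genuine divergence is the last step. The paper observes that the black (resp.\ white) difference quotient of the mixed competitor equals that of the full restriction $U^B_\Lambda$ (resp.\ $U^W_\Lambda$), so $\|U^{B,W}_\Lambda\|^2\le\frac{4}{\sin^2\phi}\bigl(\|U^B_\Lambda\|^2+\|U^W_\Lambda\|^2\bigr)$, and then finishes in one line by recycling the already-proved approximation-error bound ($\|U^B-U^B_\Lambda\|^2\le C\,\lambda_\Sigma(h)\le C$ for $h\le C^{(8)}$), which keeps the proof short at the cost of routing a pure boundedness statement through the convergence machinery. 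You instead rerun the singular-gradient estimates (Lemmas~\ref{lem:difference_quotient}, \ref{lem:geometric_bounds}, and the homothety argument of Lemma~\ref{lem:gradient_inside}) on each quadrilateral and integrate the weight $r^{2\gamma_O-2}$ over the whole disk $D_O$; this is slightly longer but self-contained below the level of Proposition~\ref{prop:approximation_error}, makes transparent that only square-integrability of $r^{\gamma_O-1}$ near the cone points is needed (no inside/outside splitting at radius $h$), and yields a constant with a cleaner explicit form. One small attribution slip: the passage from $|Oz_Q|^{2\gamma_O-2}\,\mathrm{area}(Q)$ to $C\int_Q r^{2\gamma_O-2}\,r\,dr\,d\psi$ is the homothety argument inside Lemma~\ref{lem:gradient_inside}, not Lemma~\ref{lem:geometric_bounds}, but you cite the former correctly a sentence earlier, so nothing is missing.
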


\begin{proof}
We define multi-valued harmonic functions $U^B$ and $U^W$ with periods $P^B$ and $P^W$, respectively. Restricting them to $V(\tilde{\Lambda})$, we denote these restrictions as $U^B_{\Lambda}$ and $U^W_{\Lambda}$. We then introduce the discrete function $U^{B,W}{\Lambda}:V(\tilde{\Lambda}) \to \mathds{R}$, which agrees with $U^B{\Lambda}$ on black vertices and with $U^W_{\Lambda}$ on white vertices.

As $u'$ is a discrete harmonic function, and $U^{B,W}_{\Lambda}$ shares the same periods, we have $|u'|^2\leq|U^{B,W}_{\Lambda}|^2$ by Corollary~\ref{cor:harmonic}. In a chart $z_Q$ of a given quadrilateral $Q \in F(\Lambda)$, let's denote $b$ and $w \in \mathds{C}$ as unit vectors parallel to the black and white diagonal, respectively. Due to the construction, we find that $\langle \nabla_Q U^{B,W}, b \rangle = \langle \nabla_Q U^{B}, b \rangle$ and $\langle \nabla_Q U^{B,W}, w \rangle = \langle \nabla_Q U^{W}, w \rangle$. Therefore, we can bound $\left|\nabla_Q U^{B,W}_{\Lambda}\right|^2$ as follows:
\[\left|\nabla_Q U^{B,W}_{\Lambda}\right|^2\leq \frac{4}{\sin^2{\phi}}\max\left\{(\nabla_Q U^{B} \cdot b )^2, ( \nabla_Q U^{W} \cdot w )^2\right\} \leq \frac{4}{\sin^2{\phi}}\left( \left|\nabla_Q U^{B}\right|^2+ \left| \nabla_Q U^{W} \right|^2\right)\]
due to Lemma~\ref{lem:projection}. By integrating over $\Sigma$ and using Theorem~\ref{th:energy_convergence}, we obtain the inequality:
\begin{align*}
\|u'\|^2&\leq\|U^{B,W}_{\Lambda}\|^2\leq \frac{4}{\sin^2{\phi}}\left(\|U^{B}_{\Lambda}\|^2+\|U^{W}_{\Lambda}\|^2\right)\leq \frac{4}{\sin^2{\phi}}\left(\|U^{B}\|^2+\|U^{B}-U^{B}_{\Lambda}\|^2+\|U^{W}\|^2+\|U^{W}-U^{W}_{\Lambda}\|^2\right)\\
&\leq \frac{4}{\sin^2{\phi}}\left(\|U^{B}\|^2+\|U^{W}\|^2+ C^{(13)}_{P_B,\phi} +C^{(13)}_{P_W,\phi}\right)=:C^{(14)}_{P^B,P^W,\phi}. \qedhere
\end{align*}
\end{proof}

Now we introduce some necessary notions to define the convergence of the discrete Dirichlet energy to its smooth counterpart as quadratic forms.

\begin{definition}
Let $M=\left(M_{ij}\right)_{i,j=1}^k$ be a real $k\times k$-matrix. We denote the Frobenius norm of $M$ by \[\left\|M\right\|:= \sqrt{\sum\limits_{i,j=1}^k M_{ij}^2}.\]
\end{definition}

\begin{remark}
The Frobenius norm is sub-multiplicative, $\left\|MN\right\|\leq \left\|M\right\| \cdot \left\|N\right\|$, for any two real $k \times k$-matrices $M,N$. If $M$ is a real number, then $\left\|M\right\|=|M|$.
\end{remark}

\begin{definition}
For a discretization $(\Sigma,\Lambda)$ of the Riemann surface $\Sigma$, we denote by $h$ the maximal edge length of the embedded quad-graph $\Lambda$. Let $X_\Lambda$ be a square matrix depending on the discretization $(\Sigma,\Lambda)$ (including dependency on $\Sigma$), and $X_\Sigma$ a matrix of the same size depending solely on the geometry of $\Sigma$. We write $X_\Lambda \to X_\Sigma$ as $h\to 0$ if there exist two constants $c$ and $C$, depending only on $\Sigma$ and the angle parameter $\phi$, such that $\left\| X_\Lambda - X_\Sigma \right\| \leq C\lambda_\Sigma(h)$ holds true for any $\phi$-regular discretization $(\Sigma,\Lambda)$ satisfying $h \leq c$. In the case of $h$-adapted discretizations, we replace $\lambda_\Sigma(h)$ with $h$.
\end{definition}

The convergence of energies in Theorem~\ref{th:energy_convergence} implies the convergence of the quadratic forms.

\begin{theorem}\label{th:quadratic_form_convergence}
If the maximal edge length of a sequence of $\phi$-regular discrete Riemann surfaces on $\Sigma$ approaches zero, then the quadratic form corresponding to the Dirichlet energy of discrete harmonic forms with equal black and white periods converges to its smooth counterpart, $E_\Lambda^{B=W} \to E_\Sigma$ as $h\to 0$.
\end{theorem}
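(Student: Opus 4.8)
The plan is to realize $E_\Lambda^{B=W}$ as a congruence of the complete discrete energy form $E_\Lambda$ and then to deduce the matrix convergence from the scalar energy convergence of Theorem~\ref{th:energy_convergence} by testing against finitely many period vectors. First I would make $E_\Lambda^{B=W}$ explicit. Writing $P=(\mathcal{A}_1,\ldots,\mathcal{A}_g,\mathcal{B}_1,\ldots,\mathcal{B}_g)^T\in\mathds{R}^{2g}$ for the common black-and-white periods, the constraint $\mathcal{A}_k^B=\mathcal{A}_k^W=\mathcal{A}_k$ and $\mathcal{B}_k^B=\mathcal{B}_k^W=\mathcal{B}_k$ turns the $4g$-vector $P'$ of Lemma~\ref{lem:energy_discrete} into $P'=JP$, where
\[J=\left(\begin{matrix} I_g & 0\\ I_g & 0\\ 0 & I_g\\ 0 & I_g\end{matrix}\right)\]
encodes the ordering $(\mathcal{A}^W,\mathcal{A}^B,\mathcal{B}^B,\mathcal{B}^W)$. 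Lemma~\ref{lem:energy_discrete} then gives $\langle\omega,\omega\rangle=P'^T E_\Lambda P'=P^T(J^T E_\Lambda J)P$, so the quadratic form in question is $E_\Lambda^{B=W}:=J^T E_\Lambda J$, a $2g\times 2g$ matrix, while $\langle\Omega,\Omega\rangle=P^T E_\Sigma P$ by Lemma~\ref{lem:energy_continuous}. Since $\tilde{\Pi}$ and $\Pi_\Sigma$ are symmetric (Theorem~\ref{th:period_matrix}), both $E_\Lambda$ and $E_\Sigma$ are symmetric, hence so is $M:=E_\Sigma-E_\Lambda^{B=W}$. With these identifications the estimate of Theorem~\ref{th:energy_convergence} reads precisely
\[\left|P^T M P\right|=\left|\langle\Omega,\Omega\rangle-\langle\omega,\omega\rangle\right|\leq 2\,C^{(13)}_{P,\phi}\,\lambda_\Sigma(h)\]
for every $\phi$-regular discretization with $h\leq C^{(8)}$ (and with $\lambda_\Sigma(h)$ replaced by $h$ in the $h$-adapted case).

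The remaining task is to convert this family of scalar bounds into a single bound on $\|M\|$ with a constant independent of $\Lambda$. The hard part is that the energy constant $C^{(13)}_{P,\phi}$ depends on the period vector $P$, so one cannot simply take a supremum over the unit sphere. I would sidestep this by polarizing against a \emph{fixed finite} set of test vectors: apply the displayed inequality only to the standard basis vectors $e_i$ and to the sums $e_i+e_j$, $1\leq i\leq j\leq 2g$. For each such fixed $P$ the constant $C^{(13)}_{P,\phi}$ depends only on $\Sigma$ and $\phi$, so their maximum over this finite set, call it $\tilde{C}$, again depends only on $\Sigma$ and $\phi$. Using $e_i^T M e_i=M_{ii}$ together with $(e_i+e_j)^T M(e_i+e_j)=M_{ii}+2M_{ij}+M_{jj}$ and the symmetry of $M$, one gets $|M_{ij}|\leq 3\tilde{C}\,\lambda_\Sigma(h)$ for all entries; summing the $O(g^2)$ squared entries yields $\|M\|\leq C\,\lambda_\Sigma(h)$ with $C$ depending only on $\Sigma$ and $\phi$, which is exactly $E_\Lambda^{B=W}\to E_\Sigma$. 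The $h$-adapted case is identical with $\lambda_\Sigma(h)$ replaced by $h$ throughout.

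The only delicate point is this uniformity of the constant, and it is legitimate precisely because Theorem~\ref{th:energy_convergence} already furnishes, for each individual $P$, a bound valid simultaneously for all $\phi$-regular discretizations; restricting $P$ to finitely many fixed vectors therefore costs nothing. An alternative route would note that $P\mapsto\Omega$ and $P\mapsto\omega$ are linear, so $C^{(13)}_{P,\phi}$ is homogeneous of degree two in $P$ and hence bounded by $C\|P\|^2$; combined with the elementary inequality $\|M\|\leq\sqrt{2g}\,\sup_{\|P\|=1}|P^T M P|$ for symmetric $M$, this delivers the same conclusion. Either way, no estimation beyond Theorem~\ref{th:energy_convergence} is required, and the proof reduces to the bookkeeping of the congruence $E_\Lambda^{B=W}=J^T E_\Lambda J$ and the passage from scalar to matrix norms.
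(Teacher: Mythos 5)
Your proposal is correct and follows the same overall skeleton as the paper's proof: both realize the claim as a consequence of Theorem~\ref{th:energy_convergence} applied to individual period vectors $P$, with the only work being the upgrade from the family of scalar bounds $\left|P^T(E_\Lambda^{B=W}-E_\Sigma)P\right|\leq 2C^{(13)}_{P,\phi}\lambda_\Sigma(h)$ to a bound on the Frobenius norm of the difference. Where you differ is in how the $P$-dependence of the constant is tamed. The paper argues that $C^{(13)}_{P,\phi}$ depends continuously on $P$ (because $U$ depends smoothly, in fact linearly, on $P$), takes $C^{(15)}_\phi:=\max_{|P|=1}C^{(13)}_{P,\phi}$ by compactness of the unit sphere, and then bounds $\left\|E_\Lambda^{B=W}-E_\Sigma\right\|$ by $\sqrt{2g}$ times the largest eigenvalue modulus of the symmetric difference. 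Your main route instead tests only against the finitely many fixed vectors $e_i$ and $e_i+e_j$ and recovers all entries of the symmetric matrix $M$ by polarization, which sidesteps the continuity-in-$P$ argument entirely and needs the constant only for a fixed finite list of periods; your ``alternative route'' (degree-two homogeneity of $C^{(13)}_{P,\phi}$ in $P$ plus the inequality $\left\|M\right\|\leq\sqrt{2g}\sup_{\|P\|=1}\left|P^TMP\right|$) is essentially the paper's argument. The explicit congruence $E_\Lambda^{B=W}=J^TE_\Lambda J$ is sound bookkeeping that the paper leaves implicit, and it is consistent with how the blocks are later extracted via $L=\left(1_g\ \ 1_g\right)$ in the proof of Theorem~\ref{th:matrix_convergence}. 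No gap; the polarization variant is, if anything, the more elementary and more easily quantified of the two.
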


\begin{proof}
Theorem~\ref{th:energy_convergence} ensures that for any given vector $P \in \mathds{R}^{2g}$, \[\left|P^T E_\Lambda^{B=W}P-P^T E_\Sigma P\right|=2\left|\langle U,U \rangle-\langle u,u \rangle \right|\leq C^{(13)}_{P,\phi} \lambda_\Sigma(h)\] for all discretizations $(\Sigma,\Lambda)$ with maximum edge length $h\leq C^{(8)}$. Here, $U$ and $u$ are the corresponding multi-valued smooth and discrete harmonic functions with periods given by $P$. The dependency of $C^{(13)}_{P,\phi}$ on $P$ corresponds to the dependency on $\|U\|$ and $\max_{z \in \overline{g_O(D_O)}}\|D^kU_O(z)\|$ for $k=1,2$ and charts $(D_O,g_O)$, $O \in S'$, used in the proof of Proposition~\ref{prop:approximation_error}. As $U$ depends smoothly on $P$, $C^{(13)}_{P,\phi}$ depends continuously on $P$, and we denote $C^{(15)}_{\phi}:=\max_{|P|=1} C_P$. It follows that \[\left|P^T (E_\Lambda^{B=W}- E_\Sigma) P\right|\leq C^{(15)}_{\phi} \lambda_\Sigma(h)\|P\|^2\] for all $P \in \mathds{R}^{2g}$ provided that $h \leq C^{(8)}$. Finally, we show that $|E_\Lambda^{B=W}- E_\Sigma|\leq \sqrt{2g} C^{(15)}_{\phi}\lambda\Sigma(h)$ by observing that the difference is diagonalizable, and the absolute value of any eigenvalue is bounded by $C^{(15)}_{\phi} \lambda\Sigma(h)$. The $h$-adapted case is treated similarly.
\end{proof}

In conclusion, we examine the convergence of multi-valued discrete holomorphic functions in the context of nondegenerate uniform sequences of orthodiagonal discrete Riemann surfaces $(\Sigma,\Lambda_n)$. This convergence mirrors the result presented in Theorem 5.3 by Bobenko and Skopenkov \cite{BoSk12}. Their proof relies on a similar statement established by Skopenkov in \cite{Sk13} for planar orthodiagonal quadrilateral lattices. The key transformation of a Delaunay triangulation into a Delaunay-Voronoi quadrangulation, discussed in \cite{BoSk12,Sk13}, plays a pivotal role in the argument. Furthermore, the adaptations made to handle conical singularities in \cite{BoSk12} naturally extend to the quadrilateral case.

In light of these connections and to avoid redundancy, we opt not to reproduce the proof of the theorem here. For interested readers seeking a detailed proof, we refer them to the upcoming master's thesis of Maximilian Pschigode (Technische Universit\"at Berlin).

\begin{definition}
We introduce the notion of a \textit{nondegenerate uniform sequence} of discrete Riemann surfaces $(\Sigma, \Lambda_n)$, assuming an atlas ${(D_O, g_O)}_{O \in S}$ of $\Sigma$ and an angle parameter $\phi>0$. To be classified as \textit{nondegenerate uniform}, the sequence must satisfy two conditions:
\begin{itemize}
\item The discrete Riemann surfaces $(\Sigma, \Lambda_n)$ must be $\phi$-regular.
\item For any point $p$ that does not lie within any $D_O$ associated with a singularity $O \in S$, the number of vertices in $V(\Lambda_n)$ within any disk centered at $p$ and having a radius equal to the maximal edge length must be less than a fixed constant $C$ that is independent of $n$. If $p$ does lie within a $D_O$ for some $O \in S$, the same condition should apply, or alternatively, the number of vertices $g_O(V(\Lambda_n \cap D_O))$ within any disk centered at $g_O(p)$ and having a radius equal to the maximal edge length should be smaller than $C$.
\end{itemize}
\end{definition}

It is worth noting that the boundedness of the interior angles of the quadrilaterals implies a bounded ratio of the lengths of their diagonals, which is the condition effectively used in \cite{Sk13}. The modification of the second condition for points close to a conical singularity arises from the $h$-adapted case and was introduced by Bobenko and B\"ucking in \cite{BoBu17}.

\begin{theorem}\label{th:harmonic_convergence}
Let $(\Sigma,\Lambda_n)$ be a sequence of nondegenerate uniform orthodiagonal discrete Riemann surfaces, such that the maximal edge length converges to zero as $n$ tends to infinity. Consider $P_n \in \mathds{R}^{2g}$, a sequence of vectors converging to a vector $P$. Let $(p_n,p'_n) \in E(\Lambda_n)$ be a sequence of edges such that $p_n \to p \in \Sigma$ as $n \to \infty$.

Let $U:\tilde{\Sigma} \to \mathds{R}$ be the unique multi-valued harmonic function with periods given by $P$, satisfying $U(p)=0$. Let $u_n:V(\tilde{\Lambda}_n)\to \mathds{R}$ be the unique real multi-valued discrete harmonic function with equal black and white periods given by $P_n$, satisfying $u_n(p_n)=u_n(p'_n)=0$.

Then, the functions $u_n$ converge to $U$ uniformly on each compact subset of $\tilde{\Sigma}$ as $n$ approaches infinity.
\end{theorem}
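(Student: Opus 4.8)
The plan is to deduce the uniform convergence from the energy convergence already established, via a compactness (Arzel\`a--Ascoli) argument combined with an identification of every subsequential limit through its periods, its normalization, and its harmonicity. First I would record that the discrete energies are uniformly bounded: since $u_n$ has equal black and white periods given by $P_n$ and $P_n \to P$, Theorem~\ref{th:energy_convergence} together with the continuous dependence of the smooth energy $\|U\|^2$ on the period vector shows that $\|u_n\|^2 = \tfrac12\langle \omega_n, \omega_n\rangle$ is bounded by a constant depending only on $\sup_n |P_n|$ and $\phi$, for all $n$ large enough that $h_n \leq C^{(8)}$. This uniform Dirichlet-energy bound is the only quantitative input the subsequent compactness argument requires.

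The analytic heart of the argument is an interior regularity estimate for discrete harmonic functions on orthodiagonal quad-graphs, which is exactly the statement proved by Skopenkov in the planar setting \cite{Sk13} and extended by Bobenko and Skopenkov to the surface setting \cite{BoSk12}. Since $(\Sigma, \Lambda_n)$ is orthodiagonal, every $\rho_Q$ is real and positive, so the discrete Laplacian on $\Gamma$ and $\Gamma^*$ has positive conductances and obeys a discrete maximum principle; together with the nondegenerate uniform hypothesis (which bounds the local combinatorics and, via $\phi$-regularity, the ratio of diagonal lengths) this yields a discrete analogue of the interior estimate $\|\nabla u\|_{L^\infty(K)} \leq C(K,K')\,\|\nabla u\|_{L^2(K')}$ for harmonic $u$. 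Applied in the charts $g_p$ away from $S$, and in the charts $g_O$ near the singularities (where the analysis of \cite{BoSk12} around conical points carries over, using $h$-adapted refinements where $\gamma_O$ is small), this estimate converts the uniform energy bound into equicontinuity and uniform boundedness of the $u_n$ on each compact subset of $\tilde{\Sigma}$. By Arzel\`a--Ascoli, every subsequence of $(u_n)$ then has a further subsequence converging uniformly on compacta to some continuous multi-valued function $V : \tilde{\Sigma} \to \mathds{R}$.

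It then remains to identify $V$ with $U$. The normalization passes to the limit: $u_n(p_n) = u_n(p'_n) = 0$ together with $p_n \to p$ forces $V(p) = 0$. The periods also pass to the limit, because a period is a finite sum of discrete gradient increments along a fixed representative cycle, and these sums converge to the corresponding line integral of $\nabla V$; hence $V$ has periods $P$. To see that $V$ is harmonic I would use the weak formulation: discrete harmonicity gives $a(u_n, f) = 0$ for all $f$ on $V(\Lambda_n)$ by Corollary~\ref{cor:harmonic}, and since $(u_n)$ is bounded in the Dirichlet norm and converges locally uniformly, its gradients converge weakly in $L^2_{\mathrm{loc}}$ to $\nabla V$; testing against restrictions of smooth compactly supported functions yields $\int \nabla V \cdot \nabla \varphi = 0$, so $V$ is weakly, hence classically, harmonic. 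By uniqueness of the multi-valued harmonic function with prescribed periods and prescribed value at $p$, we conclude $V = U$. As every subsequential limit is $U$, the full sequence $u_n$ converges to $U$ uniformly on compacta.

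The main obstacle is the interior regularity estimate invoked in the second paragraph: energy boundedness alone does not give equicontinuity in two dimensions (as $H^1 \not\hookrightarrow C^0$), so one genuinely needs the discrete elliptic estimate specific to orthodiagonal lattices, whose proof rests on the discrete maximum principle and a careful control of the lattice geometry, together with the singularity analysis of \cite{BoSk12}. This is precisely why the statement is restricted to orthodiagonal rather than general $\phi$-regular discrete Riemann surfaces, and why I would import the planar estimate of Skopenkov rather than reprove it here.
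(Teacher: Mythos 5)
The paper does not actually prove this theorem: it explicitly states that ``we opt not to reproduce the proof'' and defers to Theorem 5.3 of \cite{BoSk12}, to Skopenkov's planar result \cite{Sk13}, and to a forthcoming master's thesis, so there is no in-paper argument to compare yours against line by line. Your outline is the standard compactness-plus-identification scheme and is consistent with the strategy of the cited works: uniform energy bound from Theorem~\ref{th:energy_convergence}, a discrete interior regularity/oscillation estimate to upgrade bounded energy to equicontinuity, Arzel\`a--Ascoli, and identification of every subsequential limit via its periods, normalization, and harmonicity. You correctly locate the real content in the equicontinuity estimate and correctly explain why orthodiagonality is needed (positive conductances, maximum principle); importing that estimate from \cite{Sk13} and \cite{BoSk12} is no less rigorous than what the paper itself does.

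Two caveats. First, the ingredient you import is precisely where all the work is: Skopenkov's estimate is stated for planar orthodiagonal lattices and for the Dirichlet boundary value problem, and the paper's own remark that ``the key transformation of a Delaunay triangulation into a Delaunay-Voronoi quadrangulation plays a pivotal role'' signals that transporting it to general orthodiagonal quad-graphs on a surface with conical singularities is the nontrivial adaptation being deferred, not a routine citation. Your parenthetical appeal to ``$h$-adapted refinements where $\gamma_O$ is small'' is also not available to you --- the theorem assumes nondegenerate uniformity, not $h$-adaptedness, and the modified second condition of nondegenerate uniformity near singularities is what must substitute for it. Second, in the identification step, $a(u_n,\varphi_{\Lambda_n})=0$ does not directly give $\int \nabla V\cdot\nabla\varphi=0$: you need both that the piecewise-constant discrete gradients $\nabla_Q u_n$ converge weakly in $L^2_{\mathrm{loc}}$ to $\nabla V$ (which requires an argument identifying the weak limit of the discrete gradient field with the gradient of the locally uniform limit, not just boundedness) and that $\nabla_Q\varphi_{\Lambda_n}\to\nabla\varphi$ strongly, the latter following from Lemma~\ref{lem:gradient} away from $S$ with a separate estimate near the singularities. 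These are fillable gaps, but as written they are gaps.
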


%%%%%%%%%%%%%%%%%%%%%%%%%%%%%%%%%%%%%%%%%%%%%%%%%%%%%%%%%%%%%%

\section{Convergence of the discrete period matrix}\label{sec:convergence2}

Before delving into the convergence of discrete period matrices to their smooth counterpart in Theorem~\ref{th:matrix_convergence}, we shall first introduce two lemmas that will be essential for the proof.

\begin{lemma}\label{lem:definiteness}
Let $M:= \left(\begin{matrix} A & B\\ B^T & C\end{matrix}\right)$ be a symmetric and positive definite $2g \times 2g$-matrix with the $g \times g$-blocks $A=A^T, B, C=C^T$. Consider the $g \times 2g$-matrix $L:=\left( 1_g \quad 1_g \right)$ consisting of twice the identity matrix.

Then, the matrix $L M L^T-4 \left(L M^{-1} L^T\right)^{-1}$ is positive semidefinite. Moreover, if this matrix approaches zero and $M$ remains bounded throughout the convergence, then $A-C$ and $B-B^T$ also approach zero.
\end{lemma}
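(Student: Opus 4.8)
The plan is to compute the matrix in the statement exactly and to exhibit it as a single congruence $N^{T}G^{-1}N$; both assertions then drop out immediately. Throughout write $M=\left(\begin{smallmatrix} A & B\\ B^{T} & C\end{smallmatrix}\right)$ and, for $y\in\mathbb{R}^{g}$, set $w:=L^{T}y=(y^{T},y^{T})^{T}\in\mathbb{R}^{2g}$, so that $y^{T}LML^{T}y=w^{T}Mw=y^{T}(A+B+B^{T}+C)y$. For the inverse block I would use the variational identity
\[
y^{T}\left(LM^{-1}L^{T}\right)^{-1}y=\min\left\{\,z^{T}Mz \;:\; z\in\mathbb{R}^{2g},\ Lz=y\,\right\},
\]
valid for any matrix $L$ of full row rank (here $L=(1_{g}\ 1_{g})$ has rank $g$) and any positive definite $M$. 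It is obtained by minimizing the constrained quadratic $z^{T}Mz$ with a Lagrange multiplier, the unique minimizer being $z^{\ast}=M^{-1}L^{T}(LM^{-1}L^{T})^{-1}y$.

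Since $L=(1_{g}\ 1_{g})$, the constraint $Lz=y$ reads $z_{1}+z_{2}=y$. I substitute $z_{1}=\tfrac{y}{2}+t$ and $z_{2}=\tfrac{y}{2}-t$ with $t\in\mathbb{R}^{g}$ and expand $z^{T}Mz$: the part independent of $t$ equals $\tfrac14 w^{T}Mw$, the quadratic part is $t^{T}Gt$ with $G:=A+C-B-B^{T}$, and the linear part is $(Ny)^{T}t$ with $N:=A+B-B^{T}-C$. Here $G$ is positive definite, because $t^{T}Gt=s^{T}Ms$ for $s=(t^{T},-t^{T})^{T}\in\ker L$ and $M\succ 0$. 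Completing the square gives $\min_{t}=\tfrac14 w^{T}Mw-\tfrac14\,y^{T}N^{T}G^{-1}Ny$, whence the key identity
\[
LML^{T}-4\left(LM^{-1}L^{T}\right)^{-1}=N^{T}G^{-1}N,\qquad N=A+B-B^{T}-C,\quad G=A+C-B-B^{T}.
\]

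Both claims are now routine. Positive semidefiniteness of $N^{T}G^{-1}N$ is immediate from $G\succ 0$. For the limit statement, boundedness of $M$ bounds the largest eigenvalue of $G$ (for instance by $\|A\|+\|C\|+2\|B\|$) from above by some constant $c>0$, so $G\preceq c\,1_{g}$ and $G^{-1}\succeq c^{-1}1_{g}$; consequently $N^{T}N\preceq c\,(N^{T}G^{-1}N)=c\,D$, where $D$ denotes the matrix in the statement. Hence $\|N\|^{2}\le g\,c\,\|D\|$, so $D\to 0$ forces $\|N\|\to 0$. Finally $N=(A-C)+(B-B^{T})$ splits into its symmetric part $A-C$ and its antisymmetric part $B-B^{T}$, which are orthogonal for the Frobenius inner product, so $\|N\|^{2}=\|A-C\|^{2}+\|B-B^{T}\|^{2}$, and both summands tend to zero.

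The main obstacle is the derivation of the exact identity — namely the variational formula for $(LM^{-1}L^{T})^{-1}$, the completing-the-square computation that pins down $N$ and $G$ with the correct signs, and the positive definiteness of $G$ — after which the semidefiniteness and the convergence are direct. It is worth emphasizing that the hypothesis that $M$ stays bounded enters only through the upper bound on $G$, which is precisely what converts $D\to0$ into $N\to0$; without it the factor $G^{-1}$ could degenerate and $N$ need not be controlled.
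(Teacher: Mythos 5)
Your proof is correct. You arrive at exactly the same key identity as the paper, but by a different derivation. The paper conjugates $M$ by the orthogonal block matrix $J=\tfrac{1}{\sqrt 2}\left(\begin{smallmatrix}1_g&1_g\\1_g&-1_g\end{smallmatrix}\right)$, writes $L=\sqrt 2\,KJ$ with $K=(1_g\ \ 0_g)$, and invokes the Schur-complement formula for the upper-left block of $\hat M^{-1}$ (where $\hat M=JMJ$) to obtain $LML^T-4(LM^{-1}L^T)^{-1}=2\hat B\hat C^{-1}\hat B^T$; unwinding $\hat B=\tfrac12(A-C+B^T-B)$ and $\hat C=\tfrac12(A+C-B-B^T)$ shows this is literally your $N^TG^{-1}N$. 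You instead derive the identity from the variational characterization $y^T(LM^{-1}L^T)^{-1}y=\min\{z^TMz: Lz=y\}$ together with the parametrization $z=(y/2+t,\,y/2-t)$ and completion of the square --- in effect re-proving the Schur-complement formula on the fly rather than citing it, which makes your argument more self-contained at the cost of a slightly longer computation. (Incidentally, the paper's displayed Schur complement has the transposes on $\hat B$ swapped; your version has them right.) For the limit statement the two arguments are again equivalent: the paper factors $\hat C^{-1}=R^2$ and argues that $R\hat B\to0$ forces $\hat B\to0$ because the eigenvalues of $R$ stay bounded away from zero, whereas you use $G\preceq c\,1_g\Rightarrow G^{-1}\succeq c^{-1}1_g$ to get $N^TN\preceq c\,D$ directly; both reduce to the boundedness of $M$ controlling $G=2\hat C$ from above, and both finish by splitting $N$ into its symmetric part $A-C$ and antisymmetric part $B-B^T$, which are Frobenius-orthogonal.
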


We would like to express our gratitude to G\"unter Rote for providing us with the proof of positive semidefiniteness.

\begin{proof}
Let $J:= \frac{1}{\sqrt{2}}\left(\begin{matrix} 1_g & 1_g\\ 1_g & -1_g\end{matrix}\right)$, which is an orthogonal and symmetric matrix ($J^{-1}=J^T=J$). We define $K:=\left( 1_g \quad 0_g \right)$, a $g \times 2g$-matrix with the identity matrix and the zero matrix.

By performing some algebraic manipulations, we obtain:
\[L M L^T-4 \left(L M^{-1} L^T\right)^{-1}=2 KJ M JK^T-4 \left(2 KJ M^{-1} JK^T\right)^{-1}=2K\hat{M}K^T-2 \left(K \hat{M}^{-1} K^T\right)^{-1},\]
where $\hat{M}:=J M J$. Since $M$ is positive definite, $\hat{M}:= \left(\begin{matrix} \hat{A} & \hat{B}\\ \hat{B}^T & \hat{C}\end{matrix}\right)$ is also positive definite. Specifically, the upper left block of $\hat{M}^{-1}$ is given by the inverse of the Schur complement $\hat{M}/\hat{C}=\hat{A}-\hat{B}^T \hat{C}^{-1}\hat{B}$, and thus $K \hat{M}^{-1} K^T=\left(\hat{A}-\hat{B}^T \hat{C}^{-1}\hat{B}\right)^{-1}.$ This simplifies the expression to:
\[L M L^T-4 \left(L M^{-1} L^T\right)^{-1}=2\hat{A}-2 \left(\hat{A}-\hat{B}^T \hat{C}^{-1}\hat{B}\right)=2\hat{B}^T \hat{C}^{-1}\hat{B}.\]

Since $\hat{M}$ and its positive definite diagonal block $\hat{C}$ remain bounded during convergence, we know that $\hat{C}^{-1}$ is also bounded away from zero. Thus, $\hat{B}^T \hat{C}^{-1}\hat{B}$ is positive semidefinite.

By setting $Q$ as an orthogonal matrix such that $Q\hat{C}^{-1}Q^T=:D$ is diagonal, and defining $R:= Q^T \sqrt{D} Q$, we have $R^2=\hat{C}^{-1}$. Consequently, $\hat{B}^T \hat{C}^{-1}\hat{B}=\left(R\hat{B}\right)^TR\hat{B}$. The latter matrix represents the scalar products of the entries of $R\hat{B}$. Therefore, if $\hat{B}^T \hat{C}^{-1}\hat{B}$ approaches zero, then $R\hat{B}$ does as well. As no eigenvalue of the positive definite matrix $R$ goes to zero, we conclude that $\hat{B}$ approaches zero.

As a result of this construction, we have $2\hat{B}=A-C+B^T-B$. Since $A-C$ is symmetric and $B^T-B$ is antisymmetric, if their sum approaches zero, then each summand does as well.
\end{proof}

\begin{lemma}\label{lem:boundedness_periodmatrix}
Suppose the maximal edge length of $(\Sigma,\Lambda)$ satisfies $h\leq C^{(8)}$. Then, $\|E_\Lambda\|\leq C^{(16)}_{\phi}$.
\end{lemma}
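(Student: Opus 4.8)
The plan is to exploit the fact that $E_\Lambda$ is the matrix of a nonnegative quadratic form, so that its Frobenius norm is controlled by its largest eigenvalue. By Lemma~\ref{lem:energy_discrete}, for every $P'\in\mathds{R}^{4g}$ we have $P'^T E_\Lambda P'=\langle\omega,\omega\rangle=2\|u'\|^2\geq 0$, where $\omega$ is the discrete holomorphic differential whose black and white $a$- and $b$-periods have the real parts prescribed by the entries of $P'$, and $u'=\int\re(\omega)$. Moreover $E_\Lambda$ is symmetric: since $\tilde\Pi$ is symmetric by Theorem~\ref{th:period_matrix}, both $\re\tilde\Pi$ and $\im\tilde\Pi$ are symmetric, and the two off-diagonal blocks $-\re\tilde\Pi(\im\tilde\Pi)^{-1}$ and $-(\im\tilde\Pi)^{-1}\re\tilde\Pi$ are transposes of one another. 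Hence $E_\Lambda$ is symmetric and positive semidefinite, all its eigenvalues are nonnegative, and its largest eigenvalue equals $\sup_{\|P'\|=1}P'^T E_\Lambda P'$. As $E_\Lambda$ is a $4g\times 4g$ matrix, $\|E_\Lambda\|\leq\sqrt{4g}\,\lambda_{\max}(E_\Lambda)$, so it suffices to bound the Rayleigh quotient $P'^T E_\Lambda P'/\|P'\|^2$ by a constant depending only on $\Sigma$ and $\phi$.

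First I would decompose $P'$ into its black and white halves $P^B=(\mathcal{A}^B,\mathcal{B}^B)$ and $P^W=(\mathcal{A}^W,\mathcal{B}^W)$, noting $\|P'\|^2=\|P^B\|^2+\|P^W\|^2$, and apply Corollary~\ref{cor:energy_convergence2} directly: since $h\leq C^{(8)}$, this gives $P'^T E_\Lambda P'=2\|u'\|^2\leq 2\,C^{(14)}_{P^B,P^W,\phi}$. Thus the statement reduces to showing $C^{(14)}_{P^B,P^W,\phi}\leq M_\phi\|P'\|^2$ for a constant $M_\phi$ depending only on $\Sigma$ and $\phi$; then $\lambda_{\max}(E_\Lambda)\leq 2M_\phi$ and $C^{(16)}_\phi:=2\sqrt{4g}\,M_\phi$ works.

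The crux is the homogeneity of $C^{(14)}$ in the prescribed periods. The continuous harmonic function $U$ with periods $P$ depends \emph{linearly} on $P$, so scaling $P\mapsto tP$ scales $U$, each derivative bound $\|D^kU_O\|$, and $\|U\|$ by the factor $t$. Tracking this through the chain of constants, the quantities $C^{(1)}_{U,O},C^{(2)}_{U,O}$ (Lemma~\ref{lem:derivative_estimation}) and $C^{(5)}_{U,O}$ (Lemma~\ref{lem:difference_quotient}) scale linearly, the squared quantities entering $C^{(9)}_{U,\phi}$ (Proposition~\ref{prop:approximation_error}) scale quadratically, and $C^{(12)}_{U,\phi}$ (Proposition~\ref{prop:consistency_error}) scales linearly; consequently the explicit expression for $C^{(13)}_{P,\phi}$ from the proof of Theorem~\ref{th:energy_convergence}, and with it $C^{(14)}_{P^B,P^W,\phi}=\tfrac{4}{\sin^2\phi}\bigl(\|U^B\|^2+\|U^W\|^2+C^{(13)}_{P^B,\phi}+C^{(13)}_{P^W,\phi}\bigr)$, are homogeneous of degree two in $P'$. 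As each ingredient depends continuously on $P'$ and none depends on $\Lambda$, the function $P'\mapsto C^{(14)}_{P^B,P^W,\phi}$ attains a finite maximum $M_\phi$ on the compact unit sphere $\{\|P'\|=1\}$, yielding the desired quadratic bound.

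The main obstacle is precisely this homogeneity-and-continuity step: one must verify that $M_\phi$ is genuinely independent of the discretization. This is not immediate from the statement of Corollary~\ref{cor:energy_convergence2} alone, where the periods are fixed; the point is that the constants produced in its proof and in Theorem~\ref{th:energy_convergence} were, by design, assembled only from $\Sigma$, $\phi$, and the $\Lambda$-independent smooth function $U$ determined by $P'$, so they transfer uniformly across all $\phi$-regular discretizations with $h\leq C^{(8)}$.
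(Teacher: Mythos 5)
Your proposal is correct and follows essentially the same route as the paper: bound the quadratic form $P'^T E_\Lambda P'$ via Corollary~\ref{cor:energy_convergence2}, use the symmetry of $E_\Lambda$ to reduce the Frobenius norm to an eigenvalue bound, and observe that the controlling constant is independent of $\Lambda$. Your explicit homogeneity-and-compactness argument for taking $M_\phi=\max_{\|P'\|=1}C^{(14)}_{P^B,P^W,\phi}$ is in fact slightly more careful than the paper's version, which maximizes $C^{(14)}$ over the ($\Lambda$-dependent) eigenvectors of $E_\Lambda$ but implicitly relies on the same uniformity over the unit sphere.
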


\begin{proof}
Let us decompose $\mathds{R}^{4g}=\mathds{R}^{2g}\oplus\mathds{R}^{2g}$ into a black and a white part, and denote the corresponding decompositions of vectors as $P'=P^B \oplus P^W$. By Corollary~\ref{cor:energy_convergence2}, for any such vector $P'$, we know that $\left|P'^T E_\Lambda P' \right|\leq C^{(14)}_{P^B,P^W,\phi}$. Since $E_\Lambda$ is symmetric, we can find an orthonormal basis of $\mathds{R}^{4g}$ consisting of eigenvectors $e_i$. Let $C$ be the maximum of all $C^{(14)}_{e_i,\phi}$. Then, for all $P'$ with $|P'|\leq 1$, it follows that $\left|P'^T E_\Lambda P' \right|\leq C$. Consequently, $|E_\Lambda|\leq \sqrt{2g} C=:C^{(16)}_{\phi}$.
\end{proof}

\begin{theorem}\label{th:matrix_convergence}
As the maximal edge lengths of a sequence of $\phi$-regular discrete Riemann surfaces on $\Sigma$ tend to zero, the diagonal blocks of the complete discrete period matrix converge to the same matrix, the off-diagonal blocks converge to the same symmetric matrix, and the sum of a diagonal and an off-diagonal block converges to the period matrix of $\Sigma$:
\[\Pi^{B,W}- \Pi^{W,B}, \Pi^{B,B}- \Pi^{W,W} \to 0 \quad \text{and} \quad \Pi^{B,W}+\Pi^{B,B}, \Pi^{W,B}+\Pi^{W,W} \to \Pi_\Sigma  \qquad \text{as } h\to 0.\]

In particular, the discrete period matrices converge to their smooth counterpart: $\Pi \to \Pi_\Sigma$ as $h \to 0$.
\end{theorem}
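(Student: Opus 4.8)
The plan is to deduce the entire statement from the convergence of quadratic forms $E_\Lambda^{B=W}\to E_\Sigma$ (Theorem~\ref{th:quadratic_form_convergence}), supplemented by the two algebraic inputs provided: Lemma~\ref{lem:definiteness} and the boundedness $\|E_\Lambda\|\leq C^{(16)}_{\phi}$ (Lemma~\ref{lem:boundedness_periodmatrix}). First I would reduce the whole theorem to the single assertion that the two block row sums $r_1:=\Pi^{B,W}+\Pi^{B,B}$ and $r_2:=\Pi^{W,W}+\Pi^{W,B}$ both converge to $\Pi_\Sigma$, i.e.\ that $\tilde\Pi L^T\to L^T\Pi_\Sigma$ with $L=(1_g\ 1_g)$. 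Indeed $\Pi=\tfrac12(r_1+r_2)$, so $\Pi\to\Pi_\Sigma$ follows by averaging; and since $\tilde\Pi$ and $\Pi_\Sigma$ are symmetric (Theorem~\ref{th:period_matrix}), with $(\Pi^{B,B})^T=\Pi^{W,W}$ and $\Pi^{B,W},\Pi^{W,B}$ symmetric, transposing $r_1\to\Pi_\Sigma$ and $r_2\to\Pi_\Sigma$ yields $\Pi^{B,B}-\Pi^{W,W}\to0$ and $\Pi^{B,W}-\Pi^{W,B}\to0$. Thus the two off-diagonal and the two diagonal blocks of $\tilde\Pi$ symmetrize automatically once the row sums are controlled.

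The key observation is that restricting the $4g\times 4g$ form $E_\Lambda$ of Lemma~\ref{lem:energy_discrete} to equal black and white periods is the compression $E_\Lambda^{B=W}=\iota^T E_\Lambda\iota$ with $\iota=\mathrm{diag}(L^T,L^T)$, so that the block of $E_\Lambda^{B=W}$ at position $(i,j)$ is $L E_{ij}L^T$. Writing $M:=\im\tilde\Pi$, $R:=\re\tilde\Pi$, recalling $\im\Pi=\tfrac12 L M L^T$, and comparing block by block with $E_\Sigma$ from Lemma~\ref{lem:energy_continuous}, Theorem~\ref{th:quadratic_form_convergence} is equivalent to
\[LM^{-1}L^T\to 2(\im\Pi_\Sigma)^{-1},\qquad LRM^{-1}L^T\to 2\re\Pi_\Sigma(\im\Pi_\Sigma)^{-1},\qquad LRM^{-1}RL^T+LML^T\to \Xi,\]
where $\Xi:=2\re\Pi_\Sigma(\im\Pi_\Sigma)^{-1}\re\Pi_\Sigma+2\im\Pi_\Sigma$ is the upper-left block of $E_\Sigma$. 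By Lemma~\ref{lem:boundedness_periodmatrix} the matrices $R$ and $M$ are bounded and $M$ is uniformly positive definite (Theorem~\ref{th:period_matrix}), so along any subsequence I may pass to limits $R\to R_*$, $M_*\succ0$; it then suffices to identify these limits and invoke a standard subsequence argument.

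The heart of the proof is a two-sided bound pinning down $LM_*L^T=2\im\Pi_\Sigma$. On one side, Lemma~\ref{lem:definiteness} applied to the real symmetric positive definite $M_*$ gives $LM_*L^T\succeq 4(LM_*^{-1}L^T)^{-1}=2\im\Pi_\Sigma$. On the other side, the Schur complement of the manifestly positive semidefinite matrix $\bigl(\begin{smallmatrix}LR_*\\ L\end{smallmatrix}\bigr)M_*^{-1}\bigl(R_*L^T\ \ L^T\bigr)$ yields $LR_*M_*^{-1}R_*L^T\succeq(LR_*M_*^{-1}L^T)(LM_*^{-1}L^T)^{-1}(LM_*^{-1}R_*L^T)=2\re\Pi_\Sigma(\im\Pi_\Sigma)^{-1}\re\Pi_\Sigma=\Xi-2\im\Pi_\Sigma$, whence $LM_*L^T=\Xi-LR_*M_*^{-1}R_*L^T\preceq 2\im\Pi_\Sigma$. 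Equality holds in both bounds; by strict convexity of $z\mapsto z^TM_*z$ the equality in the first forces the range condition $M_*L^T=L^T\im\Pi_\Sigma$, and equality in the Schur/Cauchy--Schwarz step forces $R_*L^T=L^T\re\Pi_\Sigma$. Together these give $\tilde\Pi_*L^T=L^T\Pi_\Sigma$, i.e.\ $r_1,r_2\to\Pi_\Sigma$ along the subsequence, and hence for the full sequence. (As a by-product $LM L^T\to 2\im\Pi_\Sigma$ means the gap in Lemma~\ref{lem:definiteness} tends to zero, so that lemma independently reconfirms $\im\Pi^{B,W}-\im\Pi^{W,B}\to0$ and $\im\Pi^{B,B}-\im\Pi^{W,W}\to0$.)

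I expect the main obstacle to be precisely this gap. Because the compression $L(\cdot)L^T$ does not commute with inversion, $E_\Lambda^{B=W}$ is \emph{not} the energy form built from $\Pi$: its lower-right block is $L(\im\tilde\Pi)^{-1}L^T$ rather than $4(L(\im\tilde\Pi)L^T)^{-1}=2(\im\Pi)^{-1}$, so $\Pi$ cannot be read off directly from the limit $E_\Sigma$. The genuine work is to show that the positive semidefinite discrepancy $L(\im\tilde\Pi)L^T-4(L(\im\tilde\Pi)^{-1}L^T)^{-1}$ vanishes, which requires combining the positive-semidefiniteness from Lemma~\ref{lem:definiteness} with the Schur-complement upper bound extracted from the off-diagonal and diagonal blocks; once this is in hand, the reduction to row sums and the final symmetrization via transposition are routine.
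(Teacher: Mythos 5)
Your proposal is essentially correct and rests on the same skeleton as the paper's proof: compress the $4g\times 4g$ energy form by $\iota=\mathrm{diag}(L^T,L^T)$, read off the blockwise limits of $LE_{ij}L^T$ from Theorem~\ref{th:quadratic_form_convergence}, and close the positive-semidefinite gap $L\im\tilde{\Pi}L^T-4(L(\im\tilde{\Pi})^{-1}L^T)^{-1}$ using Lemma~\ref{lem:definiteness} together with the boundedness from Lemma~\ref{lem:boundedness_periodmatrix}. Your two-sided operator bound $2\im\Pi_\Sigma\preceq LM_*L^T\preceq 2\im\Pi_\Sigma$ is the same mechanism as the paper's completed square: expanding the paper's Limit~(\ref{eq:8}), the term $\bigl(L\re\tilde{\Pi}-\re\Pi_\Sigma L\bigr)(\im\tilde{\Pi})^{-1}\bigl(L\re\tilde{\Pi}-\re\Pi_\Sigma L\bigr)^T\succeq 0$ is precisely your Schur-complement upper bound in disguise, and your equality-case analysis ($M_*L^T=L^T\im\Pi_\Sigma$, $R_*L^T=L^T\re\Pi_\Sigma$) reproduces what the paper extracts from ``each of two psd summands tending to zero'' via the factorizations $\hat{B}^T\hat{C}^{-1}\hat{B}=(R\hat{B})^TR\hat{B}$. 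Two points where your organization is genuinely nicer: the upfront reduction of all four conclusions to the single statement $\tilde{\Pi}L^T\to L^T\Pi_\Sigma$ via the symmetry of $\tilde{\Pi}$ (the paper instead handles real and imaginary parts separately and invokes a symmetric/antisymmetric splitting at the end), and the observation that for a subsequential limit $M_*$ one does not need the Neumann-series argument the paper uses to pass the limit through the inverse in \eqref{eq:convergence_lemma}.

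The one substantive caveat is the subsequence/compactness step. The paper's notion of convergence $X_\Lambda\to X_\Sigma$ is quantitative: it demands $\|X_\Lambda-X_\Sigma\|\leq C\lambda_\Sigma(h)$ for all $h\leq c$. Extracting convergent subsequences and identifying their limits proves qualitative convergence but discards the rate, so as written your argument establishes a weaker statement than the theorem read against the paper's definition of ``$\to$''. To recover the rate you would have to replace the limit-identification by explicit perturbation estimates (as the paper does with the Neumann series and with the factorization through $R^2=\hat{C}^{-1}$); this is routine but not free, and note that even then the square-root in $\|\hat{B}\|\lesssim\|\hat{B}^T\hat{C}^{-1}\hat{B}\|^{1/2}$ must be tracked. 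The equality-case claims in your Schur/Cauchy--Schwarz steps are asserted rather than proved, but they are standard and correct.
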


\begin{proof}
As $h \to 0$, the matrix $E_\Lambda^{B=W}$ converges to $E_\Sigma$ by Theorem~\ref{th:energy_convergence}. Let $L:=\left( 1_g \quad 1_g \right)$, and using the representations of $E_\Sigma$ and $E_\Lambda$ provided in Lemmas~\ref{lem:energy_continuous} and~\ref{lem:energy_discrete}, as well as the fact that convergence of a matrix to zero in the Frobenius norm implies the convergence of any submatrix to zero, we obtain:
\begin{align}
L \left( \im \tilde{\Pi} \right)^{-1} L^T &\to 2\left( \im \Pi_{\Sigma} \right)^{-1}, \label{eq:1}\\
L  \left( \im \tilde{\Pi} \right)^{-1} \re \tilde{\Pi} L^T &\to 2\left( \im \Pi_{\Sigma} \right)^{-1} \re \Pi_{\Sigma}, \label{eq:2}\\
L \re \tilde{\Pi}  \left( \im \tilde{\Pi} \right)^{-1}  L^T &\to 2\re \Pi_{\Sigma} \left( \im \Pi_{\Sigma} \right)^{-1}, \label{eq:3}\\
L \re \tilde{\Pi} \left( \im \tilde{\Pi} \right)^{-1} \re \tilde{\Pi} L^T + L \im \tilde{\Pi} L^T &\to 2\re \Pi_{\Sigma} \left( \im \Pi_{\Sigma} \right)^{-1} \re \Pi_{\Sigma} + 2\im \Pi_{\Sigma}. \label{eq:4}
\end{align}
From the Limits~(\ref{eq:1}), (\ref{eq:2}), and (\ref{eq:3}), we deduce the following:
\begin{align}
\re \Pi_{\Sigma} L \left( \im \tilde{\Pi} \right)^{-1}  L^T \re \Pi_{\Sigma} &\to 2\re \Pi_{\Sigma} \left( \im \Pi_{\Sigma} \right)^{-1} \re \Pi_{\Sigma}, \label{eq:5}\\
\re \Pi_{\Sigma} L  \left( \im \tilde{\Pi} \right)^{-1}  \re \tilde{\Pi} L^T &\to 2\re \Pi_{\Sigma} \left( \im \Pi_{\Sigma} \right)^{-1} \re \Pi_{\Sigma}, \label{eq:6}\\
L \re \tilde{\Pi}  \left( \im \tilde{\Pi} \right)^{-1} L^T \re \Pi_{\Sigma} &\to 2\re \Pi_{\Sigma} \left( \im \Pi_{\Sigma} \right)^{-1} \re \Pi_{\Sigma}. \label{eq:7}
\end{align}
Taking the sum of the Limits~(\ref{eq:4}) and~(\ref{eq:5}) and subtracting~(\ref{eq:6}) and~(\ref{eq:7}), we get:
\begin{align}
\left(L \re \tilde{\Pi}-\re \Pi_{\Sigma} L\right) \left( \im \tilde{\Pi} \right)^{-1} \left(L \re \tilde{\Pi}-\re \Pi_{\Sigma} L\right)^T +L \im \tilde{\Pi} L^T &\to 2\im \Pi_{\Sigma}. \label{eq:8}
\end{align}
The latter can be restated as:
\begin{align}
\left(L \re \tilde{\Pi}-\re \Pi_{\Sigma} L\right) \left( \im \tilde{\Pi} \right)^{-1} \left(L \re \tilde{\Pi}-\re \Pi_{\Sigma} L\right)^T +L \im \tilde{\Pi} L^T - 4\left(L \left(\im \tilde{\Pi}\right)^{-1} L^T \right)^{-1} &\to 0 \label{eq:9}
\end{align}
provided that we show
\begin{align}\label{eq:convergence_lemma}
2\left(L \left(\im \tilde{\Pi}\right)^{-1} L^T \right)^{-1}\to \im \Pi_\Sigma.
\end{align}

To establish \eqref{eq:convergence_lemma}, we observe from the Limit~(\ref{eq:1}) that $2\delta:=L \left(\im \tilde{\Pi}\right)^{-1} L^T - 2 \left( \im \Pi_{\Sigma} \right)^{-1} \to 0.$ We set $T:= 1_g - \im \Pi_\Sigma \left( \left( \im \Pi_{\Sigma} \right)^{-1} + \delta \right)$, which satisfies $|T|=|\im \Pi_{\Sigma} \delta| \leq |\im \Pi_{\Sigma}| \cdot |\delta|<\frac{\varepsilon}{2 |\im \Pi_{\Sigma}|}<\frac{1}{2}$, provided that $0<\varepsilon< |\im \Pi_{\Sigma}|$ and $h$ is small enough to ensure $2|\delta| <\varepsilon |\im \Pi_{\Sigma}|^{-2}$. In particular, the Neumann series $\sum_{k=0}^{\infty} T^k$ converges and yields the inverse of $1_g-T=\im \Pi_\Sigma \left( \left( \im \Pi_{\Sigma} \right)^{-1} + \delta \right)$. Thus,
\begin{align*}
\left\|2\left(L \left(\im \tilde{\Pi}\right)^{-1} L^T\right)^{-1} -\im \Pi_\Sigma\right\|&=\left\|\left( \left( \im \Pi_{\Sigma} \right)^{-1} + \delta \right)^{-1}-\im \Pi_\Sigma\right\|\\
&=\left\|\left(1_g-T\right)^{-1}\im \Pi_\Sigma  -\im \Pi_\Sigma\right\|=\left\|\sum\limits_{k=1}^{\infty} T^k\im \Pi_\Sigma \right\|\\
&=\left\|T \left(1_g-T\right)^{-1}\im \Pi_\Sigma\right\|\leq \frac{\left\|T\right\| \cdot \left\|\im \Pi_\Sigma \right\|}{1-\left\|T\right\|}<\varepsilon
\end{align*}
for any small enough $\varepsilon$. Hence, \eqref{eq:convergence_lemma} indeed holds true.

Since $\im \tilde{\Pi}$ is positive definite by Theorem~\ref{th:period_matrix}, $\left(L \re \tilde{\Pi}-\re \Pi_{\Sigma} L\right) \left( \im \tilde{\Pi} \right)^{-1} \left(L \re \tilde{\Pi}-\re \Pi_{\Sigma} L\right)^T$ is positive semidefinite. Also, $L \im \tilde{\Pi} L^T - 4\left(L \left(\im \tilde{\Pi}\right)^{-1} L^T \right)^{-1}$ is positive semidefinite by Lemma~\ref{lem:definiteness}. We have shown in \eqref{eq:9} that the sum of these two positive semidefinite matrices approaches zero. By multiplying with corresponding eigenvectors, we deduce that any eigenvalue of each of the two matrices converges to zero. Since the Frobenius norm is invariant under multiplication with orthogonal matrices, it follows that each individual matrix converges to zero.

Using the boundedness of $E_\Lambda$ by Lemma~\ref{lem:boundedness_periodmatrix}, we can apply Lemma~\ref{lem:definiteness} to deduce that \[\im \Pi^{B,W}-\im \Pi^{W,B} \to 0 \quad \text{and} \quad \im \Pi^{B,B}-\im \Pi^{W,W} \to 0.\] From (\ref{eq:convergence_lemma}), we further deduce that
\begin{align}
L \im \tilde{\Pi} L^T \to 4\left(L \left(\im \tilde{\Pi}\right)^{-1} L^T \right)^{-1} &\to 2\im \Pi_\Sigma, \label{eq:11}\\ \textnormal {so } \quad \im \Pi^{B,W}+ \im \Pi^{B,B} \quad \textnormal{and} \quad \im \Pi^{W,B}+ \im \Pi^{W,W} &\to \im \Pi_\Sigma. \notag
\end{align}

In analogy to the proof of Lemma~\ref{lem:definiteness}, Limits~\eqref{eq:7} and \eqref{eq:11}, together with the boundedness of the positive definite matrix $\im \tilde{\Pi}$, imply that $\left(L \re \tilde{\Pi}-\re \Pi_{\Sigma} L\right) \to 0.$ In particular,  \[\re \Pi^{B,W}+ \re \Pi^{W,W} \to \re \Pi_\Sigma \quad \text{and} \quad \re \Pi^{W,B}+ \re \Pi^{B,B} \to \re \Pi_\Sigma.\]

Taking the difference, we have $(\re \Pi^{B,W}-\re \Pi^{W,B})+ (\re \Pi^{W,W}- \re \Pi^{B,B}) \to 0$. This is the sum of a symmetric and an antisymmetric matrix, so each summand goes to zero. Thus, \[\re \Pi^{B,W}-\re \Pi^{W,B} \to 0 \quad \text{and} \quad \re \Pi^{W,W}- \re \Pi^{B,B} \to 0. \qedhere\]
\end{proof}

In the case of discrete Riemann surfaces based on quad-graphs with orthogonal diagonals, Theorem~\ref{th:matrix_convergence} simplifies to the following result:

\begin{corollary}\label{cor:matrix_convergence_orthogonal}
For a sequence of $\phi$-regular orthodiagonal discrete Riemann surfaces on $\Sigma$ with maximal edge lengths approaching zero, the blocks of the complete discrete period matrix converge to the real and imaginary parts of the period matrix of $\Sigma$:
\[\Pi^{B,W}, \Pi^{W,B} \to \im \Pi_\Sigma \qquad \text{and } \qquad \Pi^{B,B}, \Pi^{W,W} \to \re \Pi_\Sigma  \qquad \text{as } h\to 0.\]
\end{corollary}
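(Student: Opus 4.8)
The plan is to combine the two facts already in hand: the general convergence of the blocks of the complete discrete period matrix from Theorem~\ref{th:matrix_convergence}, and the special algebraic structure that the orthodiagonal hypothesis forces on these blocks via Lemma~\ref{lem:period_orthodiagonal}. First I would recall from Lemma~\ref{lem:period_orthodiagonal} that, since every $\rho_Q$ is real, the off-diagonal blocks $\Pi^{B,W}$ and $\Pi^{W,B}$ are purely imaginary while the diagonal blocks $\Pi^{B,B}$ and $\Pi^{W,W}$ are real; equivalently $\re \Pi^{B,W}=\re \Pi^{W,B}=0$ and $\im \Pi^{B,B}=\im \Pi^{W,W}=0$. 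These identities hold for every discretization in the sequence, not merely in the limit, so they may be used freely.

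Next I would invoke the two convergence statements $\Pi^{B,W}+\Pi^{B,B}\to\Pi_\Sigma$ and $\Pi^{W,B}+\Pi^{W,W}\to\Pi_\Sigma$ from Theorem~\ref{th:matrix_convergence} and split each into its real and imaginary parts, which is legitimate since taking the real or imaginary part of a matrix does not increase its Frobenius norm, so the $O(\lambda_\Sigma(h))$ (respectively $O(h)$) rate is preserved. For the first limit, the real part together with $\re \Pi^{B,W}=0$ and the reality of $\Pi^{B,B}$ gives $\Pi^{B,B}\to\re\Pi_\Sigma$, while the imaginary part together with $\im \Pi^{B,B}=0$ gives $\im\Pi^{B,W}\to\im\Pi_\Sigma$, that is $\Pi^{B,W}\to i\,\im\Pi_\Sigma$. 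Applying the identical decomposition to the second limit yields $\Pi^{W,W}\to\re\Pi_\Sigma$ and $\Pi^{W,B}\to i\,\im\Pi_\Sigma$. This is exactly the asserted convergence of the four blocks.

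I do not expect a genuine obstacle: the corollary is a direct bookkeeping consequence of decomposing Theorem~\ref{th:matrix_convergence} into real and imaginary parts under the structural constraints of Lemma~\ref{lem:period_orthodiagonal}. The one point that warrants care is the normalization in the statement. Because $\Pi^{B,W}$ and $\Pi^{W,B}$ are purely imaginary, they converge to $i\,\im\Pi_\Sigma$ rather than literally to the real matrix $\im\Pi_\Sigma$, so the claim should be read as asserting that the (purely imaginary) off-diagonal blocks have imaginary parts tending to $\im\Pi_\Sigma$. As a consistency check, I would observe that the four individual limits just derived automatically reproduce the two difference limits $\Pi^{B,W}-\Pi^{W,B}\to 0$ and $\Pi^{B,B}-\Pi^{W,W}\to 0$ of Theorem~\ref{th:matrix_convergence}, confirming that the simplification is compatible with the general result.
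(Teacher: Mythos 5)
Your proposal is correct and follows exactly the paper's own route: combine the structural constraints from Lemma~\ref{lem:period_orthodiagonal} (off-diagonal blocks purely imaginary, diagonal blocks real) with the block limits of Theorem~\ref{th:matrix_convergence} and separate real and imaginary parts. Your remark about the normalization (the purely imaginary blocks converge to $i\,\im\Pi_\Sigma$ rather than to the real matrix $\im\Pi_\Sigma$) is a fair reading of a notational looseness that is present in the paper's statement as well.
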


\begin{proof}
By Lemma~\ref{lem:period_orthodiagonal}, $\Pi^{B,W}$ and $\Pi^{W,B}$ are purely imaginary, and $\Pi^{B,B}$ and $\Pi^{W,W}$ are purely real. The convergence result from Theorem~\ref{th:matrix_convergence} implies that $\Pi^{B,W},\Pi^{W,B}$ converge to the imaginary part of $\Pi_\Sigma$, and $\Pi^{B,B},\Pi^{W,W}$ converge to its real part.
\end{proof}

We aim to deduce the convergence of discrete Abelian integrals of the first kind from Corollary~\ref{cor:matrix_convergence_orthogonal} and Theorem~\ref{th:harmonic_convergence}, but directly applying Theorem~\ref{th:harmonic_convergence} to the real and imaginary parts of the multi-valued discrete holomorphic function is not possible since, in general, the black and white $b$-periods of a discrete holomorphic differential with equal black and white $a$-periods are not equal.

However, Corollary~\ref{cor:matrix_convergence_orthogonal} allows us to deduce that these black and white $b$-periods become equal in the limit. By decomposing $P' \in \mathds{R}^{4g}$ into its black and white parts as $P'=P^B \oplus P^W$, we can substitute the matrix $\left(\begin{matrix} \im \Pi_\Sigma & \re \Pi_\Sigma\\ \re \Pi_\Sigma & \im \Pi_\Sigma\end{matrix}\right)$, which represents the complete discrete period matrix obtained in the limit, into the quadratic form $E_\Lambda$ from Lemma~\ref{lem:energy_discrete}. With the representation of $E_\Sigma$ given in Lemma~\ref{lem:energy_discrete}, we readily verify that $2P'^T E_\Lambda P$ converges to $(P^B)^T E_\Sigma P^B + (P^W)^T E_\Sigma P^W$ as $h \to 0$.

These two observations imply that in the orthodiagonal case, the discrete Dirichlet energy of the discrete harmonic differential with black periods given by $P^B$ and white periods given by $P^W$ approaches the Dirichlet energy of the harmonic differential with periods given by $P$, provided the maximum edge length of the $\phi$-regular discrete Riemann surface goes to zero and $P^B$ and $P^W$ both converge to $P$. This property is crucial in the original proof of Theorem~\ref{th:harmonic_convergence} by Bobenko and Skopenkov \cite{BoSk12}. Therefore, employing Corollary~\ref{cor:matrix_convergence_orthogonal}, we can also apply Theorem~\ref{th:harmonic_convergence} to the sequences of real and imaginary parts of discrete Abelian integrals of the first kind with equal black and white $a$-periods. This leads us to the following convergence theorem.

\begin{theorem}\label{th:holomorphic_convergence}
Consider a nondegenerate uniform sequence of orthodiagonal discrete Riemann surfaces $(\Sigma,\Lambda_n)$, where the maximal edge length converges to zero as $n \to \infty$. Let $\mathcal{A} \in \mathds{C}^{g}$ be a given vector of $a$-periods, and let $(p_n,p'_n) \in E(\Lambda_n)$ be a sequence of edges such that $p_n \to p \in \Sigma$ as $n \to \infty$. Denote by $\Omega$ the unique holomorphic differential with $a$-periods given by $\mathcal{A}$, and denote by $\omega_n$ the unique holomorphic differential on the medial graph of $\Lambda_n$ with equal black and white $a$-periods given by $\mathcal{A}$. We normalize the Abelian integral of the first kind $\int \Omega$ to be zero at $p$, and $\int \omega_n$ is normalized such that $\int \omega_n(p_n)=\int \omega_n(p'_n)=0$. Then, the discrete Abelian integrals of the first kind $\int \omega_n$ converge to $\int \Omega$ uniformly on each compact subset of $\tilde{\Sigma}$.
\end{theorem}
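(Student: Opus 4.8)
The plan is to split the complex-valued convergence into two real ones and apply a mild strengthening of Theorem~\ref{th:harmonic_convergence} to each. Write the discrete Abelian integral as $\int\omega_n = u_n + i v_n$ with $u_n := \re\int\omega_n$ and $v_n := \im\int\omega_n$, and the continuous one as $\int\Omega = U + iV$ with $U := \re\int\Omega$ and $V := \im\int\Omega$. Since the surfaces are orthodiagonal, the discrete Hodge star preserves real forms (as exploited in the proof of Lemma~\ref{lem:period_orthodiagonal}), so $\re\omega_n$ and $\im\omega_n$ are real discrete harmonic differentials; hence $u_n$ and $v_n$ are real multi-valued discrete harmonic functions with $du_n = \re\omega_n$ and $dv_n = \im\omega_n$, and analogously $U,V$ are real multi-valued harmonic functions. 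The prescribed normalizations $\int\omega_n(p_n) = \int\omega_n(p'_n) = 0$ and $\int\Omega(p) = 0$ become $u_n(p_n) = u_n(p'_n) = v_n(p_n) = v_n(p'_n) = 0$ and $U(p) = V(p) = 0$, which are precisely the normalizations demanded in Theorem~\ref{th:harmonic_convergence}.

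Next I would read off the periods. Because $\omega_n$ has equal black and white $a$-periods $\mathcal{A}$, both the black and white $a$-periods of $u_n$ equal $\re\mathcal{A}$ and those of $v_n$ equal $\im\mathcal{A}$, so the $a$-periods agree already for each fixed $n$. For the $b$-periods I would invoke the complete discrete period matrix exactly as in the computation $B=\tilde{\Pi}A$ of Lemma~\ref{lem:energy_discrete}: the black and white $b$-periods of $\omega_n$ equal $(\Pi_n^{B,W}+\Pi_n^{B,B})\mathcal{A}$ and $(\Pi_n^{W,B}+\Pi_n^{W,W})\mathcal{A}$. By Corollary~\ref{cor:matrix_convergence_orthogonal}, in the orthodiagonal case both sums converge to $(\re\Pi_\Sigma + i\im\Pi_\Sigma)\mathcal{A} = \Pi_\Sigma\mathcal{A}$, which is exactly the $b$-period of $\Omega$. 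Consequently the black and white $b$-periods of $u_n$ both converge to $\re(\Pi_\Sigma\mathcal{A})$, the $b$-periods of $U$, and those of $v_n$ both converge to $\im(\Pi_\Sigma\mathcal{A})$, the $b$-periods of $V$.

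The main obstacle is that these black and white $b$-periods coincide only in the limit, not for fixed $n$, so Theorem~\ref{th:harmonic_convergence} — whose hypothesis demands equal black and white periods — cannot be applied verbatim. I would circumvent this using the slightly more general convergence principle recorded in the discussion preceding the statement: the Bobenko--Skopenkov proof of Theorem~\ref{th:harmonic_convergence} uses only that the discrete Dirichlet energy of the discrete harmonic form with black periods $P^B_n$ and white periods $P^W_n$ converges to the continuous Dirichlet energy with periods $P$ whenever $P^B_n, P^W_n \to P$. This energy convergence follows by substituting the limiting complete period matrix $\left(\begin{matrix} \im \Pi_\Sigma & \re \Pi_\Sigma\\ \re \Pi_\Sigma & \im \Pi_\Sigma\end{matrix}\right)$ furnished by Corollary~\ref{cor:matrix_convergence_orthogonal} into the quadratic form $E_\Lambda$ of Lemma~\ref{lem:energy_discrete}, which yields $2\,{P'}^{T} E_\Lambda P' \to (P^B)^T E_\Sigma P^B + (P^W)^T E_\Sigma P^W$. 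Applying this strengthened statement separately to $u_n$, whose black and white periods both converge to those of $U$, and to $v_n$, whose periods both converge to those of $V$, gives $u_n \to U$ and $v_n \to V$ uniformly on every compact subset of $\tilde{\Sigma}$.

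Finally, adding the two real limits yields $\int\omega_n = u_n + i v_n \to U + iV = \int\Omega$ uniformly on each compact subset of $\tilde{\Sigma}$, which is the assertion. The only ingredient beyond Theorem~\ref{th:harmonic_convergence} and Corollary~\ref{cor:matrix_convergence_orthogonal} is the robustness of the real convergence result under black/white period perturbations that vanish in the limit, and this robustness is exactly what the energy-form argument above supplies.
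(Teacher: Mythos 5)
Your proposal is correct and follows essentially the same route as the paper: the paper's argument (given in the discussion preceding the theorem) likewise splits $\int\omega_n$ into real and imaginary parts, identifies the same obstacle that the black and white $b$-periods of $\omega_n$ coincide only in the limit, and resolves it via Corollary~\ref{cor:matrix_convergence_orthogonal} together with the energy-form computation $2P'^{T}E_\Lambda P' \to (P^B)^T E_\Sigma P^B + (P^W)^T E_\Sigma P^W$, which is exactly the robustness of Theorem~\ref{th:harmonic_convergence} under black/white period perturbations that you invoke. Your write-up is somewhat more explicit about reading off the $b$-periods via $B=\tilde{\Pi}A$, but the substance is identical.
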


%%%%%%%%%%%%%%%%%%%%%%%%%%%%%%%%%%%%%%%%%%%%%%%%%%%%%%%%%%%%%%

\section{Conclusion}\label{sec:conclusion}

The main result of our paper, Theorem~\ref{th:matrix_convergence}, establishes the convergence of the discrete period matrix to its continuous counterpart for sequences of rather general quadrangulations. We have also specified the limits of the four blocks of the complete discrete period matrix, shedding light on the relation between black and white periods. While in the case of orthodiagonal quadrilaterals, these blocks coincide with the real and imaginary parts of the continuous period matrix, this relation does not hold in the general case, as can be easily verified. Surprisingly, the limit matrix exhibits complete symmetry when black and white periods are interchanged, a notable result considering the general lack of such symmetry at the discrete level. The meaning and implications of this symmetry in the limit, particularly in statistical physics, remain intriguing open questions.

Our proof is conceptually distinct from Bobenko and Skopenkov's approach in \cite{BoSk12}, despite some similarities. In our proof, neither the black nor the white subgraph plays a specific role, and they can be interchanged. For numerical computations of discrete period matrices of compact polyhedral surfaces, Bobenko and Skopenkov's convergence result \cite{BoSk12} and its improved error rate using adapted triangulations \cite{BoBu17} are often sufficient, as triangulations are easy to obtain. Quadrangulations, however, offer natural discretizations for \textit{square-tiled surfaces} or \textit{origamis}, which are widely studied translation surfaces \cite{Z06, Sh22}. These quadrangulations, consisting of squares, allow for computationally easier parametrizations and subdivisions, enabling precise computation of discrete period matrices for surfaces with not too many vertices. This method was effectively applied by \c{C}elik, Fairchild, and Mandelshtam \cite{CFM23} to approximate the algebraic curve corresponding to a given translation surface. Our Theorem~\ref{th:matrix_convergence} not only provides theoretical confirmation of the convergence observed in \cite{CFM23} on square-lattices but also extends it to quadrangulations consisting of rectangles. This extension allows for a more straightforward approximation of the L-shape with an irrational ratio of side lengths. It also allows for considering affine transformations of square-tiled surfaces, leading to decompositions into parallelograms. The error estimates derived in our proof provide explicit expressions for constants, although in practice, their computation or approximation may be challenging due to their dependence on the derivatives of harmonic functions.

The discrete period matrices corresponding to the L-shaped translational surface obtained by \c{C}elik, Fairchild, and Mandelshtam are exclusively composed of purely imaginary elements. This outcome is expected since the L-shaped translational surface belongs to the class of M-curves, and all discretizations presented in \cite{CFM23} exhibit the same symmetry. Recently, D\"untsch's master's thesis \cite{D23} delved into this matter. She introduced the concept of discrete real Riemann surfaces and demonstrated that the discrete period matrix of a discrete real Riemann surface shares the same structure as the period matrix of the corresponding real Riemann surface with matching numbers of real ovals and type (dividing or non-dividing). Additionally, she provided characterizations of the complete discrete period matrices of discrete real Riemann surfaces.

However, characterizing the discrete Schottky locus, the space of discrete period matrices of compact discrete Riemann surfaces in the Siegel upper half space, and relating it to the classical Schottky locus remains an open question. Not all discrete Riemann surfaces correspond to polyhedral surfaces \cite{BoG17}, prompting us to explore whether the corresponding Schottky loci differ. Further research in this direction may offer new insights into discrete Riemann surfaces and their connections to continuous Riemann surfaces. 

%%%%%%%%%%%%%%%%%%%%%%%%%%%%%%%%%%%%%%%%%%%%%%%%%%%%%%%%%%%%%%

\section*{Acknowledgment}

The author gratefully acknowledges the support from the Deutsche Forschungsgemeinschaft (DFG -- German Research Foundation) -- Project-ID 195170736 -- TRR109. Special thanks to Daniele Agostini, Paul Breiding, Samantha Fairchild, Yelena Mandelshtam, and T\"urk\"u \"Ozl\"um \c{C}elik for introducing the author to their research on the Schottky problem and demonstrating their interest in employing discrete methods for computing Riemann matrices of translational surfaces. The author also expresses gratitude to Alexander Bobenko and Ulrike B\"ucking for insightful discussions regarding previous approaches to address the convergence of discrete Riemann surfaces. Moreover, heartfelt thanks go to G\"unter Rote for resolving the issue of positive semidefiniteness in Lemma~\ref{lem:definiteness}.

%%%%%%%%%%%%%%%%%%%%%%%%%%%%%%%%%%%%%%%%%%%%%%%%%%%%%%%%%%%%%%%%%%%%%%%%%%%%%%%%%%%%%%%%%%%%%%%%%%%%%%%%%%%%%%%%%%%%%%%%%%%%%%%%%%%%%%%%%%%%%%%

\bibliographystyle{plain}
\begin{small}
\bibliography{Convergence}
\end{small}

\end{document}